\documentclass[11pt]{amsart}
\usepackage{amsmath,amsfonts,amssymb,amscd,amsthm,amsbsy,epsf}
\setlength{\oddsidemargin}{0cm} \setlength{\evensidemargin}{0in}
\setlength{\textwidth}{6.2truein} \setlength{\textheight}{8.1truein}
\newtheorem{thm}{Theorem}[section]
\newtheorem{lem}[thm]{Lemma}
\newtheorem{prop}[thm]{Proposition}
\newtheorem{adden}[thm]{Addendum}
\newtheorem{cor}[thm]{Corollary}

\newtheorem{quest}[thm]{Question}
\theoremstyle{definition}
\newtheorem{defn}[thm]{Definition}

\newtheorem{rem}[thm]{Remark}
\def\bA{{\mathbf A}}
\def\bB{{\mathbf B}}
\def\bD{{\mathbf D}}
\def\bL{{\mathbf L}}
\def\A{{\mathcal A}}
\def\B{{\mathcal B}}
\def\C{{\mathcal C}}
\def\D{{\mathcal D}}
\def\G{{\mathcal G}}
\def\K{{\mathcal K}}
\def\L{{\mathcal L}}
\def\M{{\mathcal M}}
\def\P{{\mathcal P}}
\def\S{{\mathcal S}}
\def\U{{\mathcal U}}
\def\Y{{\mathcal Y}}
\def\nat{{\mathbb N}}
\def\que{{\mathbb Q}}
\def\zed{{\mathbb Z}}
\def\Rec{\operatorname{Rec}}

\def\wtilde{\widetilde}
\def\too{\longrightarrow}
\def\LL{\langle\!\langle}
\def\GG{\rangle\!\rangle}
\begin{document}
\title{Unsolvable problems about higher-dimensional knots and 
related groups}
\author{F. Gonz\'alez-Acu\~na, C. M{\tiny c}A. Gordon and J. Simon}
\dedicatory{Dedicated to the memory of Michel Kervaire}
\maketitle
\markboth{}{}

\section{Introduction}

We consider in this paper the classes of groups $\K_0$, $\K_1$, $\K_2$, 
$\K_3$, $\S$, $\M$ and $\G$, each properly containing the preceding one, 
related to codimension~2 smooth embeddings of manifolds. 
$\K_n$ is the class of groups of complements of $n$-spheres in 
$S^{n+2}$; $\S$ (resp. $\M,\G$) is the class of groups of complements 
of orientable, closed surfaces in $S^4$ (resp. a 1-connected 4-manifold, 
a 4-manifold). 
$\G$ is in fact the class of all finitely presented groups and $\K_0$ 
contains only the infinite cyclic group.
We are interested in the problem of recognizing when a group in one
of these classes belongs to a smaller class.
In general, this is an unsolvable problem.

\begin{thm} \label{thm1.1}
Let $\A$ and $\B$ be members of $\{\K_0,\K_1,\K_2,\K_3,\S,\M,\G\}$ such 
that $\B\subsetneq \A$ and $\A\supset \K_3$. 
Then there does not exist an algorithm that can decide, given a finite 
presentation of a group $G$ in $\A$ whether or not $G$ is in $\B$.
\end{thm}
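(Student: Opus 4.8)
I would prove the theorem by a uniform reduction from a fixed undecidable problem. For each admissible pair $(\A,\B)$ the aim is to construct, recursively in a word $w$ of a fixed finitely presented group $\Gamma$ with unsolvable word problem, a finite presentation $\Pi_w$ of a group $G_w = G(\Pi_w)$ such that $G_w \in \A$ for every $w$, while $G_w \in \B$ if and only if $w =_\Gamma 1$. Since every class on the list contains $\K_0$, and hence $\zed$, it suffices to arrange that $G_w \cong \zed$ when $w =_\Gamma 1$ and $G_w \in \A\smallsetminus\B$ when $w \ne_\Gamma 1$: an algorithm deciding ``$G\in\B$'' for inputs $G\in\A$ would then decide the word problem of $\Gamma$, which is impossible.

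The delicate requirement --- already present in the smallest case $\A = \K_3$ --- is that $\Pi_w$ must \emph{provably} present a group in $\A$. For $\A=\K_3$ I would exploit Kervaire's characterization ($G\in\K_3$ iff $G$ is finitely presented, $H_1(G)=\zed$, $H_2(G)=0$, and $G$ has weight one), together with the elementary fact that a presentation of deficiency one whose abelianization is $\zed$ automatically has $H_2=0$ (Euler characteristic of the presentation $2$-complex). So it is enough to maintain, at every stage, a deficiency-one presentation carrying a bookkept normal generator and abelianization $\zed$; equivalently, to realize each stage by an honest knot $\kappa_w\subset S^{n+2}$ with $n\ge 3$, so that the complement group lies in $\K_3$ by construction. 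For $\A\in\{\S,\M,\G\}$ one argues analogously, realizing each stage by a surface in $S^4$, in a $1$-connected $4$-manifold, or in an arbitrary $4$-manifold, and invoking the corresponding descriptions of these classes. Because the construction must be rich enough to encode an undecidable problem while always producing knot groups, one needs a higher-dimensional knot group with unsolvable word problem; such groups exist by standard embedding arguments, and I would take $\Gamma$ to be one, say $\Gamma=\pi_1(S^{n+2}\smallsetminus\kappa_0)$.

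Given $w$, I would then build $G_w$ by an Adian--Rabin-type collapsing construction carried out \emph{inside} $\A$: beginning with $\kappa_0$ and an embedded loop representing $w$ in its complement, perform a fixed recursive sequence of moves that are geometric for the relevant class (connected sum, trivial handle or tube additions, and a ``tying-in'' operation of satellite or surgery type manufactured from the loop), designed so that $G_w\cong\zed$ precisely when $w=_\Gamma 1$, while when $w\ne_\Gamma 1$ the group $\Gamma$ embeds in $G_w$ and, moreover, $G_w$ contains a prescribed obstruction subgroup, or admits a prescribed obstruction quotient, $N=N(\B)$ certifying $G_w\notin\B$. One picks $N$ according to $\B$: any nontrivial finite group suffices for $\B\in\{\K_0,\K_1\}$ (classical knot groups and $\zed$ are torsion-free, and $G_w\supseteq\Gamma$ already has unsolvable word problem); for $\B=\K_2$ a finite, respectively finitely presented, group that is excluded from all $2$-knot groups by the known restrictions on their subgroups; and for $\B$ one of $\K_3$, $\S$, $\M$ an obstruction forcing $H_2\ne 0$, $H_1\ne\zed$, or weight $>1$ respectively, which by the characterizations of the classes keeps $G_w$ out of $\B$. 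In every case $w=_\Gamma 1$ is equivalent to $G_w\in\B$.

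I expect the main obstacle to be exactly the compatibility imposed above: carrying out the collapsing construction while never leaving $\A$. Algebraically this is the problem of preserving $H_2=0$ (together with $H_1=\zed$ and weight one, and the analogous invariants for $\S$ and $\M$) under a process that a priori adds relators freely; geometrically it is the problem of ``geometrizing'' each move as an operation on knots, or on surfaces in $4$-manifolds, whose output manifestly lies in $\A$ and for which the dichotomy ``$w=_\Gamma 1$'' versus ``$w\ne_\Gamma 1$'' is visible as ``unknotted'' versus ``genuinely knotted''. Once this is accomplished the undecidability of the word problem of $\Gamma$ transfers verbatim to the undecidability of ``$G_w\in\B$'', and running the appropriate variant for each pair $(\A,\B)$ with $\B\subsetneq\A$ and $\A\supseteq\K_3$ yields the theorem.
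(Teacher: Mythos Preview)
Your overall architecture is sound and close to the paper's: build, effectively in a parameter, a group that lies in $\A$ for every value of the parameter, is $\zed$ for the ``trivial'' value, and lies outside $\B$ otherwise.  Two differences are worth noting.  First, except for the pair $(\G,\M)$ the paper reduces from the \emph{triviality problem} for finitely presented groups rather than from the word problem of a fixed $\Gamma$: given an arbitrary finite presentation of $G$, it produces $K\in\A$ with $K\cong\zed$ iff $G=1$ and $K\notin\B$ otherwise (what the authors call an $(\A,\B,\{\zed\})$-construction).  Your word-problem reduction is of course interconvertible with this, but the paper's choice lets the construction start from the presentation itself rather than from a geometrically realized $\Gamma\in\K_3$, and avoids needing a knot group with unsolvable word problem as an input (that existence is a \emph{corollary} in the paper, not a hypothesis).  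Second, the paper does not try to keep deficiency one; it computes $H_2$ directly via Mayer--Vietoris for carefully chosen towers of amalgamated products and HNN extensions.

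The genuine gap in your proposal is the case $\B=\K_2$.  You write that you will plant in $G_w$ ``a finite, respectively finitely presented, group that is excluded from all $2$-knot groups by the known restrictions on their subgroups''.  No such restriction is available.  Two-knot groups can have torsion (e.g.\ twist-spun knots), so a finite subgroup does not help, and whether every finitely presented group embeds in some $2$-knot group is precisely one of the open questions posed at the end of the paper.  The paper's obstruction is of an entirely different nature: for $w\ne1$ it arranges that the commutator subgroup $K'$ of the output $K\in\K_3$ satisfies $H_1(K';\que)=0$ but $H_2(K';\que)\ne0$, which by the Farber--Hillman--Milnor duality for infinite cyclic covers is incompatible with $K\in\K_2$.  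Achieving this requires the specific amalgamation in the proof of Theorem~\ref{thm7} together with Addendum~\ref{adden1} (forcing $H_2(P)$ infinite), and is not something a generic ``tying-in'' or satellite move will produce.  Similarly, for $(\S,\K_3)$ and $(\M,\S)$ the paper's constructions (universal central extensions over $A_5$ in Theorem~\ref{thm8}; a centralizer analysis via Bass--Serre theory in Theorem~\ref{thm9}) are tailored to the characterizations in Theorems~\ref{thm:Kervaire} and~\ref{thm2} and are not as routine as your sketch suggests.  For $(\G,\M)$ your idea of forcing weight ${>}1$ is exactly right, but carrying it out is nontrivial: the paper uses Klyachko's theorem to certify that $\zed * D_w$ has weight ${>}1$ when $D_w\ne1$.
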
 

For $\A=\K_1$, $\B = \K_0$ one can prove, using Haken's theorem
\cite{Ha}, that such an algorithm exists.  

We conjecture that Theorem~\ref{thm1.1} also holds 
for $\A=\K_2$;  we show that this is true if there is a group 
in $\K_2$ with unsolvable word problem.

The case $\A = \K_3$ and $\B = \K_0$ of Theorem~\ref{thm1.1}  implies 
that the isomorphism problem for $\K_3$ and the other three classes 
containing it is unsolvable. 
Conjecturally this should hold also for $\K_2$.

Though we do not know whether 2-knot groups with unsolvable word problem 
exist, we prove (Corollary~\ref{cor:wordprob}) that there are 3-knot groups with 
unsolvable word problem. 
This is a consequence of Theorem~\ref{thm7} which states that every 
finitely presented group embeds in a 3-knot group.

We use often methods of \cite{Gor2} in which, in fact, the case $(\A,\B)= 
(\G,\K_3)$ of Theorem~\ref{thm1.1}  is proved. 
The cases $(\A,\B) = (\G,\K_i)$, $i=0,1$, actually follow from 
\cite[Theorem~1.1]{R1}  
and the fact that the groups of $\K_1$ are torsion free \cite{P}; 
they were known to Baumslag and Fox (see \cite{St}).

It is also proved in \cite{Gor2} that the problem of deciding if the second 
homology of a finitely presented group $G$ is trivial is unsolvable.
The case $(\A,\B) = (\S,\K_3)$ of our theorem actually states that one 
cannot decide if a group in $\S$ has trivial second homology.

We show (Theorem~\ref{thm11}) that, in general, problems concerning 
the computation of the integral homology of finitely presented groups are 
unsolvable. 
We also prove (Theorems~\ref{thm12} and \ref{thm13}) incomputability 
results about the Whitehead groups $Wh_0(G)$ and $Wh_1(G)$,  
and Wall's surgery groups $L_n(G)$.

In the last two sections we prove a geometric unsolvability result: 
If $\K_n$ contains a group with unsolvable word problem then there is no 
algorithm which decides whether or not an $n$-sphere in $S^{n+2}$ is unknotted.
Since as we mentioned above, $\K_n$ contains groups with unsolvable word 
problem if $n\ge3$, it follows that no algorithm to decide if $n$-knots 
are trivial exists if $n\ge3$.
This result has been proved by Nabutovsky and Weinberger \cite{NW}. 
In contrast, Haken's classical result \cite{Ha} asserts that if $n=1$ 
such an algorithm exists. 

In Section~2 we define the various classes of knots being considered and 
give characterizations of the corresponding classes of groups. 
In Section~3 we give a particular way of effectively 
embedding an arbitrary group in a 
perfect group which will be useful in subsequent constructions. 
We then prove that some 3-knot groups are universal, that is, contain 
copies of every finitely presented group and, therefore, have unsolvable word
problem.
Also in Section~3 we prove Theorem~\ref{thm1.1} except for the case
$(\A,\B) = (\G,\M)$. 
We do this by using what we call an $(\A,\B,\C)$-construction to show 
that the solvability of the problem in question would imply the 
decidability of the triviality problem for finitely presented groups. 
In Section~4 we do the remaining case $(\A,\B) = (\G,\M)$; here the 
proof is based on the existence of finitely presented groups with unsolvable
word problem. 
In Section~5 we show that problems dealing with the homology, Whitehead 
groups and surgery groups of finitely presented groups are, in general, 
unsolvable. 
In Section~6 we give a recursive enumeration of $n$-knots; this is used in 
Section~7 where we derive the undecidability of the knotting problem for 
$n$-spheres from the existence of groups in $\K_n$ with unsolvable word 
problem.
As we mentioned above we do not know if such groups exist in $\K_2$.

The first-named author would like to thank Jon Simon, Dennis Roseman and 
the University of Iowa for their kind hospitality when he was a Visiting 
Professor; part of the present work was developed there. 

\section{Classes of knot groups}

In this section we define the classes of groups we are interested in.
We will be working in the PL category, 
and all embeddings will be locally flat. 
An {\em $n$-knot\/} is an $n$-sphere $\Sigma^n$ embedded in the $(n+2)$-sphere
$S^{n+2}$; the fundamental group of its complement, $\pi_1(S^{n+1}-\Sigma^n)$,
is called the {\em group\/} of the $n$-knot.

Two $n$-knots $(S^{n+2},\Sigma_1^n)$, $(S^{n+2},\Sigma_2^n)$ are 
{\em equivalent\/} if there is a PL homeomorphism from $S^{n+2}$ to 
$S^{n+2}$ mapping $\Sigma_1^n$ onto $\Sigma_2^n$. 
An {\em $n$-knot type\/} is an equivalence class of $n$-knots.

An $n$-knot $(S^{n+2},\Sigma^n)$ is {\em trivial\/} if there is an 
$(n+1)$-disk $D^{n+1}$ in $S^{n+2}$ such that $\partial D^{n+1} =\Sigma^n$.

For $n\ge 0$ we define $\K_n$ to be the class of groups of $n$-knots. 
It is well known 
(see \cite{Ar2}, 
\cite{Hi}, 
\cite{Fa}
\cite{Fo}, 
\cite{Ke1}, 
\cite{Z}) 
that $\{\zed\} = \K_0 \subsetneq \K_1 \subsetneq \K_2 \subsetneq \K_3 =\K_n$ 
for $n>3$.

Define $\S_n$ (resp.\ $\M_n$) to be the class of fundamental groups of 
complements of closed orientable $n$-manifolds embedded in $S^{n+2}$ 
(resp.\ a 1-connected $(n+2)$-manifold). 
One has $\K_1 = \S_1=\M_1$ by the 3-dimensional Poincar\'e Conjecture. 
Also, if $n\ge2$, $\S_2 = \S_n$ (see \cite{Si}) and $\M_2 = \M_n$, so we 
set $\S = \S_2$ and $\M = \M_2$. 
In fact $\M$ is the class of groups of complements of a 2-sphere embedded 
in a manifold of the form $S^2 \times S^2 \#\ldots\# S^2 \times S^2$
\cite{Gon1}.
Let $\G$ be the class of all finitely presented groups.

Kervaire \cite{Ke1} has given the following ``intrinsic'' (i.e. not 
involving presentations) group-theoretic characterization of $\K_3$.
The symbol $\LL t\GG$ denotes the normal closure of $t$.

\begin{thm}[Kervaire]\label{thm:Kervaire}
$\K_3 = \{G\in \G : H_1 (G) \cong \zed$, $H_2 (G) = 0$ and there exists 
$t\in G$ such that $\LL t\GG = G\}$.
\end{thm}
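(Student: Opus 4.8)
The plan is to prove both inclusions. For the easy direction, suppose $G \in \K_3$, so $G = \pi_1(S^5 - \Sigma^3)$ for some 3-knot $\Sigma^3$. Alexander duality in $S^5$ gives $H_1(S^5-\Sigma^3) \cong \zed$ and $H_2(S^5-\Sigma^3) = 0$; since the complement is an open 5-manifold homotopy equivalent to a complex through dimension... well, more carefully, one computes $H_i(G)$ using that the infinite cyclic cover or the standard knot-complement arguments give $H_1(G) \cong \zed$ and $H_2(G) = 0$ (the latter because $H_2$ of the complement vanishes by duality and the Hurewicz-type comparison, or because $S^5-\Sigma^3$ is aspherical in low degrees in the relevant sense — actually one just needs $H_1,H_2$ of the group, which for a knot complement coincide with those of the space in these degrees since $\pi_1$ surjects appropriately). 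Finally, a meridian $t$ of the knot normally generates $\pi_1$ of the complement (standard: the meridian normally generates because $\Sigma^3$ is simply connected, so killing $t$ kills everything — this is where the high dimension / simple connectivity of the knotted sphere is used), giving $\LL t \GG = G$.

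For the hard direction — which I expect to be the main obstacle — suppose $G$ is finitely presented with $H_1(G)\cong\zed$, $H_2(G)=0$, and $\LL t\GG = G$ for some $t\in G$. I want to realize $G$ as a 3-knot group. The idea is to build the knot complement as a suitable manifold with the homotopy type of (a subcomplex of) $K(G,1)$ through low dimensions, then recognize it as $S^5 - \Sigma^3$. Concretely: take a finite presentation $2$-complex for $G$ and thicken it; the conditions $H_1 = \zed$ and $H_2 = 0$ are exactly what is needed (by a Mayer–Vietoris / homology-sphere-complement argument) to arrange that the boundary behaves like the boundary of a tubular neighborhood of an $S^3$. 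One performs surgery on the $5$-manifold-with-boundary to kill $\pi_2$ and make the complement have the correct homology of a circle (the homology of $S^{n+2}$ minus an $n$-sphere), using $H_2(G) = 0$ to do the $2$-dimensional surgeries without obstruction; the normal-generation hypothesis $\LL t\GG = G$ is used to choose the meridian and to guarantee that filling in the $S^1 \times D^4$ (tubular neighborhood of the would-be knot $\Sigma^3 = S^3$) reconstitutes $S^5$. One checks the result is $S^5$ via the high-dimensional Poincaré conjecture (h-cobordism/$s$-cobordism theorem in dimension $5$), after verifying simple-connectivity and the correct homology of the union.

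The delicate points, which I would expect to occupy most of the work: (1) arranging that the thickened presentation complex, after surgeries, has boundary a \emph{homology} $S^3 \times S^1$ and then genuinely $S^3 \times S^1$ — this requires simply-connecting the boundary and invoking Poincaré in the relevant dimension; (2) showing the glued-up closed $5$-manifold is simply connected (here $\LL t\GG = G$ does the job, since the meridian bounds in the added handle) and has trivial homology, hence is $S^5$; and (3) checking that the core $S^3$ of the added handle is \emph{unknotted enough} to be locally flat / PL, and that the resulting $\pi_1$ of its complement is exactly $G$ rather than a quotient — i.e., that no extra relations were introduced. This last point is where one must be careful about whether the $2$-complex was aspherical or only $2$-dimensional; since we only control $H_1,H_2$ and not higher homotopy, the argument must be engineered so that $\pi_1$ of the final complement depends only on the $2$-skeleton data. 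I would structure the write-up so that the geometric construction is done first in general (producing the knot and complement), and the identification $\pi_1 = G$ is verified last by a van Kampen computation comparing the complement's fundamental group with the presentation we started from.
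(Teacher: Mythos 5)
First, a point of comparison: the paper does not prove this statement at all; it is quoted from Kervaire's paper [Ke1], so your proposal has to be measured against Kervaire's own argument rather than anything in the text. Your easy direction is essentially right and is the standard one: Alexander duality gives $H_1(X)\cong\zed$ and $H_2(X)=0$ for the exterior $X$, the meridian normally generates by van Kampen applied to $X\cup (S^3\times D^2)=S^5$, and the only wobble is your claim that $H_1,H_2$ of the group ``coincide'' with those of the space: the correct statement is Hopf's theorem that $H_2(G)$ is a quotient of $H_2(X)$, which is all you need. (Also, the tubular neighbourhood of $\Sigma^3$ in $S^5$ is $S^3\times D^2$, not $S^1\times D^4$ as you write.)

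The genuine gap is in your plan for the hard direction, specifically the step ``arranging that the boundary is a homology $S^3\times S^1$ and then genuinely $S^3\times S^1$ \dots by simply-connecting the boundary and invoking Poincar\'e in the relevant dimension.'' For $3$-knots the exterior is a $5$-manifold, so its boundary is a $4$-manifold, and there is no $4$-dimensional PL/smooth recognition theorem available (nor was there for Kervaire); even a homotopy equivalence of the boundary to $S^3\times S^1$ would not yield a PL homeomorphism. As described, this step fails precisely in the case $n=3$ covered by the theorem. Kervaire's construction avoids ever having to recognize the boundary: one first builds a closed $(n+2)$-manifold $M$ with $\pi_1(M)\cong G$ and the homology of $S^1\times S^{n+1}$ (surgery on a framed link of relator circles in $\#_g(S^1\times S^{n+1})$, then killing the excess $H_2$ by framed surgery on embedded $2$-spheres --- this is where $H_2(G)=0$ enters via Hopf's theorem, and where $n+2\ge5$, general position and stable parallelizability are used); one then surgers a circle representing the weight element $t$. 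The resulting closed manifold is a homotopy $(n+2)$-sphere, hence $S^{n+2}$ by the high-dimensional Poincar\'e theorem, the belt $n$-sphere of that surgery is the knot, its tubular neighbourhood boundary is $S^n\times S^1$ automatically (trivial normal bundle), and the exterior is $M$ minus an open neighbourhood of a circle, whose fundamental group is still $G$ by general position, with meridian $t$; this also disposes of your worry (3) about extra relations. So your outline has the right ingredients ($H_2(G)=0$ to kill $2$-dimensional homology, $\LL t\GG=G$ to recover the sphere, Smale in dimension $\ge5$), but the way you propose to produce the exterior with the correct boundary is not workable and needs to be replaced by a construction, such as Kervaire's, in which the $S^n\times S^1$ boundary comes for free.
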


Also it is easy to see that 
$$\M = \{G\in \G:\ \text{\em there exists } t\in G
\ \text{\em such that } \LL t\GG = G\}\ .$$

We have 
$$\K_3 \subsetneq \S \subsetneq \M \subsetneq \G\ .$$
The fact that the inclusions $\S\subset \M$ and $\M\subset \G$ are proper
is obvious. 
The existence of groups $G\in \S$ with $H_2(G) \ne 0$ 
\cite{BMS}, \cite{Gor1}, \cite{Li}, \cite{M} 
shows that the inclusion $\K_3\subset \S$ is also proper. 

Before giving a group-theoretic characterization of the class $\S$ we recall  
the definition of the Pontrjagin product of two commuting elements of a group.
Suppose $a,b\in G$ and $[a,b] =1$. 
Then the {\em Pontrjagin product}
 of $a$ and $b$, which we denote by $a\wedge b$, 
is the image of the canonical generator of $H_2(\zed\times\zed)$ under 
$H_2(\zed\times\zed) \xrightarrow{(\varphi_{a,b})_*} H_2(G)$ where 
$\varphi_{a,b} : \zed\times\zed\to G$ is the homomorphism such that 
$\varphi_{a,b} (1,0) = a$ and $\varphi_{a,b} (0,1) =b$. 
If $t\in G$ and $C_t$ is the centralizer of $t$ in $G$, then we write 
$t\wedge C_t = \{t\wedge c: c\in C_t\}$. 

Notice that if $C_t$ is cyclic then $t\wedge C_t =0$ because 
$(\varphi_{t,c})_*$ factors through the trivial group $H_2(C_t)$.  

The following characterization of the groups in $\S$ is a slight 
reformulation of a theorem of Simon \cite{Si}, using a remark in \cite{BT}. 

\begin{thm}[Simon]  \label{thm2}
$\S = \{G \in \G :H_1 (G) \cong \zed$ and there exists $t\in G$ such that 
$\LL t\GG =G$ and $t\wedge C_t = H_2(G)\}$.
\end{thm}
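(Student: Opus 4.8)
The plan is to deduce the stated characterization from Simon's theorem in \cite{Si} by means of a purely algebraic reformulation, which is the content of the remark in \cite{BT}. Simon's theorem characterizes the groups of connected closed orientable surfaces in $S^4$ by the requirements that $G$ be finitely presented, that $H_1(G)\cong\zed$, that $G$ be normally generated by an element $t$, and by one further condition, to the effect that $H_2(G)$ is generated by classes carried by tori in the surface complement meeting the surface in meridians. The reformulation rests on the following observation: since every element of $C_t$ commutes with $t$, the element $t$ lies in the centre of $C_t$, so $\varphi_{t,c}$ factors as $\zed\times\zed\to\langle t\rangle\times C_t\xrightarrow{\ \mu\ }C_t\hookrightarrow G$ with $\mu(t^n,c)=t^nc$ a homomorphism, and composing the K\"unneth inclusion $H_1(C_t)\cong H_1(\langle t\rangle)\otimes H_1(C_t)\hookrightarrow H_2(\langle t\rangle\times C_t)$ with $\mu_*$ and with $H_2(C_t)\to H_2(G)$ yields a homomorphism $H_1(C_t)\to H_2(G)$ sending the class of $c$ to $t\wedge c$. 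Hence $c\mapsto t\wedge c$ is additive, $t\wedge C_t$ is a \emph{subgroup} of $H_2(G)$, and ``$H_2(G)$ is generated by such products'' is equivalent to ``$t\wedge C_t=H_2(G)$''. It remains to recall the geometric content of Simon's theorem, as follows.

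\textbf{Necessity.} Let $G=\pi_1(X)$ with $X=S^4-\operatorname{int}N(F)$ and $F$ a connected closed orientable surface of genus $g$. Alexander duality gives $H_1(X)\cong\widetilde H^2(F)\cong\zed$, generated by a meridian $t$ of $F$; bounding a loop of $X$ by a disc in $S^4$ made transverse to $F$ shows $\LL t\GG=G$. Since $F$ is orientable and $H_2(S^4)=0$, its normal Euler number vanishes, so $\partial X\cong F\times S^1$ with $t$ the class of $\{*\}\times S^1$. Using the long exact sequence of $(X,\partial X)$, Lefschetz duality, and $H_2(X)\cong\widetilde H^1(F)$, one checks that $H_2(\partial X)\to H_2(X)$ and $H_2(X)\to H_2(G)$ are both onto, and that $[F\times\{*\}]$ dies in $H_2(X)$ (it bounds a regular fibre of a map $X\to S^1$ representing the generator of $H^1(X)$). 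Therefore $H_2(G)$ is generated by the images of the tori $\gamma_i\times S^1$, where $\gamma_1,\dots,\gamma_{2g}$ generate $H_1(F)$. Each such torus is $\gamma_i\times S^1\subset\partial X\subset X$, giving $\varphi_{t,[\gamma_i]}:\zed\times\zed\to G$ (with $[\gamma_i]\in C_t$, as it comes from $\pi_1(F)\times\zed$), and its fundamental class maps to $t\wedge[\gamma_i]$. So $H_2(G)$ is generated by elements of $t\wedge C_t$; since $t\wedge C_t$ is a subgroup contained in $H_2(G)$, equality follows.

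\textbf{Sufficiency.} Conversely, suppose $G$ is finitely presented with $H_1(G)\cong\zed$, $\LL t\GG=G$ and $t\wedge C_t=H_2(G)$. By the characterization of $\M$ recalled in this section, $G$ is the group of a $2$-sphere $\Sigma$ in some $M=S^2\times S^2\#\cdots\# S^2\times S^2$, with $H_1(M-\Sigma)\cong H_1(G)\cong\zed$. One then removes the $S^2\times S^2$ summands of $M$ by a sequence of surgeries. Picking $c_1,\dots,c_k\in C_t$ with $t\wedge c_1,\dots,t\wedge c_k$ generating the group $H_2(G)$ (finitely generated as $G$ is finitely presented), the hypothesis $t\wedge C_t=H_2(G)$ provides embedded tori in $M-\Sigma$ along which these surgeries can be performed compatibly with $\Sigma$, each one converting $\Sigma$ into a surface of one higher genus while cancelling the corresponding $H_2$-class. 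After finitely many steps the ambient manifold has become $S^4$ and $\Sigma$ a connected closed orientable surface $F$ with $\pi_1(S^4-F)\cong G$.

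The main obstacle is the sufficiency direction. Since one works in dimension $4$, where the Poincar\'e conjecture and the $h$-cobordism theorem are unavailable, the surgeries cannot be carried out abstractly and their effect recognised afterwards; they must be realised \emph{ambiently}, keeping the embedded surface in view throughout, which is where the substantive work in \cite{Si} lies. The second delicate point is to show that the Pontrjagin-product condition is genuinely \emph{sufficient} --- that a class of the form $t\wedge c$ is always represented by an embedded torus of the isotopy type needed for the surgery --- and not merely necessary, the latter being transparent from the necessity direction. It is precisely the translation of this geometric requirement into the clean statement $t\wedge C_t=H_2(G)$, together with the centrality observation above, that constitutes the reformulation afforded by \cite{BT}.
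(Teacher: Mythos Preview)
The paper does not give its own proof of this theorem: it simply states it as ``a slight reformulation of a theorem of Simon \cite{Si}, using a remark in \cite{BT}.'' Your proposal correctly identifies precisely this structure --- Simon's geometric characterization plus the Beyl--Tappe observation that $c\mapsto t\wedge c$ factors through a homomorphism $H_1(C_t)\to H_2(G)$, so that $t\wedge C_t$ is already a subgroup and the condition ``$t\wedge C_t$ generates $H_2(G)$'' collapses to equality --- and your centrality argument for this is sound.

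Where you go further than the paper is in sketching the necessity and sufficiency arguments from \cite{Si}. The necessity sketch is accurate in outline. The sufficiency sketch, however, is rougher than you acknowledge: the passage ``the hypothesis $t\wedge C_t=H_2(G)$ provides embedded tori in $M-\Sigma$ along which these surgeries can be performed'' hides the real content of Simon's paper, namely how to realize an abstract Pontrjagin class $t\wedge c$ by an \emph{embedded} torus in the correct position for ambient surgery, and how to control the effect on both the ambient $4$-manifold and the surface simultaneously. You flag this yourself in your final paragraph, which is fair --- but since the paper itself defers entirely to \cite{Si} for this, your sketch is already more than what the paper offers, and is consistent with it.
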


We now give characterizations using presentations.

A {\em Wirtinger presentation\/} is a finite presentation $\langle x_1,\ldots,
x_m :r_1,\ldots,r_n\rangle$ such that each relator $r_k$ is of the form 
$x_i^{-1} w^{-1} x_j w$. 
Then (see \cite{Si})

\begin{thm}\label{thm3}
$\S = \{ G\in \G : H_1 G\cong \zed$ and $G$ has a Wirtinger 
presentation$\}$.
\end{thm}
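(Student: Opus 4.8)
The plan is to establish the two inclusions of the asserted equality separately. For the inclusion $\S\subseteq\{G\in\G:H_1G\cong\zed\text{ and }G\text{ has a Wirtinger presentation}\}$ one reads a Wirtinger presentation off a generic projection of the surface, exactly as in the classical one--dimensional case; for the reverse inclusion one invokes Theorem~\ref{thm2} and translates its homological condition into the existence of a Wirtinger presentation.

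For the forward inclusion, let $F\subset S^4$ be a closed connected orientable surface and $G=\pi_1(S^4\setminus F)$. Alexander duality gives $H_1(G)\cong H^2(F)\cong\zed$. Fixing a chart $S^4\setminus\{*\}\cong\mathbb R^4$, put $F$ in general position for a linear projection onto a $3$--dimensional subspace; choosing the projection direction generically, we may assume the restriction to $F$ is a generic immersion with no branch points, so its image is a \emph{broken surface diagram}: a union of \emph{sheets} meeting along a $1$--manifold of double points, with finitely many triple points, and with over/under information recorded by the fourth coordinate. The standard van Kampen computation for such a diagram then presents $G$ by one generator $x_i$ (a meridian) per sheet, one relation $x_a^{-1}x_c^{-1}x_bx_c$ per double--point curve (with $x_c$ the meridian of the over--sheet and $x_a,x_b$ the meridians of the two local pieces of the under--surface there), and relations at the triple points that are consequences of these. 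This is a Wirtinger presentation.

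For the reverse inclusion, suppose $G\in\G$, $H_1(G)\cong\zed$, and $G=\langle x_1,\dots,x_m:r_1,\dots,r_n\rangle$ with $r_k=x_{i_k}^{-1}w_k^{-1}x_{j_k}w_k$. Abelianizing $r_k$ cancels the $w_k$--conjugation, so $H_1(G)$ is the free abelian group on the $x_i$ modulo the incidence relations of the graph $\Gamma$ with vertices $x_1,\dots,x_m$ and an edge joining $x_{i_k}$ to $x_{j_k}$ for each $k$; hence $H_1(G)$ is free abelian of rank the number of components of $\Gamma$. Since $H_1(G)\cong\zed$, $\Gamma$ is connected, and as each edge records a conjugacy between the two generators at its ends, all the $x_i$ are conjugate in $G$. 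Thus $\LL t\GG=G$ for $t:=x_1$, and by Theorem~\ref{thm2} it suffices to show $t\wedge C_t=H_2(G)$. The inclusion $t\wedge C_t\subseteq H_2(G)$ is automatic, so we must show surjectivity. On the presentation $2$--complex $K$ one has $H_2(K)=\ker(\partial_2\colon C_2(K)\to C_1(K))$, and $\partial_2$ sends the $k$--th relator cell to $e_{j_k}-e_{i_k}$, so $H_2(K)$ is the $1$--cycle group $Z_1(\Gamma)=H_1(\Gamma;\zed)$, with $H_2(K)\to H_2(G)$ onto. A $1$--cycle of $\Gamma$ is a closed edge--path through relators; using the relations $x_{i_k}=w_k^{-1}x_{j_k}w_k$ along it produces an identity $x_i=wx_iw^{-1}$ for a vertex $x_i$ on the path and a word $w$, so $w\in C_{x_i}$, and the image in $H_2(G)$ of the associated class of $H_2(K)$ is $\pm(x_i\wedge w)$. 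Writing $x_i=ata^{-1}$ and using that inner automorphisms act trivially on $H_2(G)$, $x_i\wedge w=(ata^{-1})\wedge w=t\wedge(a^{-1}wa)$ with $a^{-1}wa\in C_t$; so $H_2(G)$ is generated by elements of $t\wedge C_t$, and since $t\wedge C_t$ is a subgroup (bilinearity of the Pontrjagin product in commuting arguments) we get $t\wedge C_t=H_2(G)$. Hence $G\in\S$.

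The routine ingredients are the Alexander--duality computation and the van Kampen reading of the diagram. The main obstacle is the step ``the image in $H_2(G)$ of the associated class of $H_2(K)$ is $\pm(x_i\wedge w)$'': one must verify, at the chain level, that the homomorphism $\zed\times\zed\to G$ sending the canonical generators to $x_i$ and $w$ carries the fundamental class of $H_2(\zed\times\zed)$ precisely onto the $H_2(G)$--image of the chosen $\Gamma$--cycle. This is essentially the remark of \cite{BT} invoked after Theorem~\ref{thm2}, and it is exactly what mediates between Wirtinger presentations and that homological condition. Alternatively, the reverse inclusion can be carried out geometrically, by realizing the given Wirtinger presentation --- after Tietze moves if necessary --- as a broken surface diagram of a surface $F_0\subset S^4$ with $\pi_1(S^4\setminus F_0)\cong G$, whereupon Alexander duality together with $H_1(G)\cong\zed$ forces $F_0$ to be connected and orientable; there the combinatorics of realizing an arbitrary Wirtinger presentation as an honest surface diagram is the point that requires care.
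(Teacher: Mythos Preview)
The paper does not supply its own proof of this theorem; it simply refers to \cite{Si}. So there is no proof in the paper to compare against, and your proposal is an attempt to fill in what the citation covers.

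Your forward inclusion is the standard argument. One small correction: a generic projection of a surface in $4$--space to a hyperplane \emph{can} have branch points (Whitney--umbrella singularities); you cannot in general arrange ``no branch points'' by genericity alone. This does not affect the conclusion, since branch points contribute no new Wirtinger relations, but the sentence as written is not quite right.

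For the reverse inclusion you take the homological route through Theorem~\ref{thm2}, whereas Simon's original argument in \cite{Si} (and Yajima's in \cite{Y}) is the direct geometric one you sketch at the end: realize the given Wirtinger presentation by an explicit ribbon surface in $S^4$, then use $H_1(G)\cong\zed$ and Alexander duality to see that the surface is connected and orientable. Your homological approach is a legitimate alternative and is precisely the reformulation the paper alludes to via \cite{BT}. You have correctly located the only nontrivial step: that a cycle of Wirtinger relators maps to a Pontrjagin product in $H_2(G)$. This is cleanest via Hopf's formula $H_2(G)\cong (R\cap[F,F])/[F,R]$: the element of $R\cap[F,F]$ associated to your closed edge--path is a product of conjugates of the $r_{k_j}^{\pm1}$ which, after telescoping the successive conjugacies, equals $[\tilde x_i^{-1},\tilde w^{-1}]$ modulo $[F,R]$, and that class is exactly $x_i\wedge w$. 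Your assertion that $t\wedge C_t$ is a subgroup is also correct and follows the same way: from $[\tilde t,\tilde c_1\tilde c_2]=[\tilde t,\tilde c_1]\cdot \tilde c_1[\tilde t,\tilde c_2]\tilde c_1^{-1}$ and $[\tilde t,\tilde c_2]\in R$ one gets $t\wedge(c_1c_2)=t\wedge c_1+t\wedge c_2$ even without assuming $c_1,c_2$ commute with each other.

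One caveat on logical dependence: since the paper obtains Theorem~\ref{thm2} itself from \cite{Si} together with \cite{BT}, you should make sure you are not implicitly assuming Theorem~\ref{thm3} in your appeal to Theorem~\ref{thm2}. In practice Simon proves the Wirtinger characterization first and the $t\wedge C_t$ formulation is a repackaging, so the geometric construction you outline at the end is the more self--contained route for this direction.
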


Artin gave in \cite{Ar1} a characterization of 1-knot groups using 
presentations. 

\begin{thm}[Artin]\label{thm4} 
A group belongs to $\K_1$ if and only if it has a presentation 
$\langle x_1,\ldots,x_n : x_j^{-1} \beta_j$, $1\le j\le n\rangle$ such that 
\begin{itemize}
\item[(1)] For $j=1,\ldots,n$, $\beta_j$ is conjugate to $x_{\mu (j)}$ in 
the free group $F$ generated by $x_1,\ldots,x_n$,
\item[(2)] $\prod_{j=1}^n \beta_j  = \prod_{j=1}^n x_j$ 
in $F$, and 
\item[(3)] $\mu$ is the permutation $(1\ 2\cdots n)$.
\end{itemize}
\end{thm}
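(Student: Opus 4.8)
The plan is to prove both implications of Theorem~\ref{thm4} by combining Alexander's theorem that every $1$-knot is the closure of a braid with Artin's description of the braid group $B_n$ as the subgroup of $\operatorname{Aut}(F)$ consisting of those automorphisms $\phi$ for which each $\phi(x_j)$ is conjugate in $F$ to some $x_{\pi(j)}$ and $\phi(x_1\cdots x_n)=x_1\cdots x_n$. The bridge between geometry and this algebra is the standard presentation of the fundamental group of a closed-braid complement, which I would establish by a van Kampen computation along the Heegaard torus of $S^3$.

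For the ``only if'' direction, start from a $1$-knot $\Sigma^1\subset S^3$. By Alexander's theorem, $\Sigma^1$ is the closure $\widehat\beta$ of some braid $\beta\in B_n$; since $\widehat\beta$ is connected, the underlying permutation $\mu$ of $\beta$ is an $n$-cycle, and after relabelling the strands we may take $\mu=(1\,2\cdots n)$, which is condition~(3). Let $\phi_\beta\in\operatorname{Aut}(F)$ be the automorphism $\beta$ induces on the free group $F=\langle x_1,\dots,x_n\rangle$ generated by the meridians of the punctured disc, and set $\beta_j=\phi_\beta(x_j)$. Artin's description immediately gives conditions~(1) and~(2), since $\beta_j=\phi_\beta(x_j)$ is conjugate to $x_{\mu(j)}$ in $F$ and $\prod_j\beta_j=\phi_\beta(\prod_j x_j)=\prod_j x_j$. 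To identify the group, write $S^3=V_1\cup V_2$ as a union of two solid tori with $\widehat\beta$ a closed braid in $\operatorname{int}V_1$ winding $n$ times around $V_1$ and $V_2$ a tubular neighbourhood of the braid axis. Then $V_1-\widehat\beta$ fibres over $S^1$ with fibre the $n$-punctured disc and monodromy $\phi_\beta$, so $\pi_1(V_1-\widehat\beta)=\langle x_1,\dots,x_n,s\mid s x_j s^{-1}=\beta_j,\ 1\le j\le n\rangle$, with the longitude of $V_1$ represented by $s$ and the meridian of $V_1$ by $x_1\cdots x_n$. A van Kampen argument across the Heegaard torus, using $\pi_1(V_2)\cong\zed$ together with the fact that the $V_1$-longitude bounds a disc in $V_2$ while the $V_1$-meridian is a core of $V_2$, kills $s$ and collapses the extra generator, yielding $\pi_1(S^3-\widehat\beta)=\langle x_1,\dots,x_n\mid x_j^{-1}\beta_j,\ 1\le j\le n\rangle$, the required presentation.

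For the ``if'' direction, suppose $G$ has a presentation $\langle x_1,\dots,x_n\mid x_j^{-1}\beta_j,\ 1\le j\le n\rangle$ with $\beta_1,\dots,\beta_n\in F$ satisfying (1)--(3), and define the endomorphism $\phi$ of $F$ by $\phi(x_j)=\beta_j$. Here I would invoke the algebraically substantive half of Artin's theorem: an endomorphism of $F$ sending each generator to a conjugate of a permuted generator and fixing $x_1\cdots x_n$ is necessarily an automorphism, and in fact lies in the image of the embedding $B_n\hookrightarrow\operatorname{Aut}(F)$. Thus $\phi=\phi_\beta$ for some $\beta\in B_n$ whose underlying permutation is $\mu=(1\,2\cdots n)$ by~(3); hence $\widehat\beta$ is a knot, and by the computation above its group is exactly $G$, so $G\in\K_1$.

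The main obstacle is precisely the converse, algebraic half of Artin's characterization of the braid group invoked in the last paragraph: that conditions (1) and (2) already force $\phi\colon x_j\mapsto\beta_j$ to be an automorphism of $F$, equivalently a braid automorphism. This is the technical heart of Artin's original treatment and is not formal; in an exposition such as this it is quoted rather than reproved. Everything else is routine: Alexander's braiding theorem, the fibring of the closed-braid complement over $S^1$, and the van Kampen bookkeeping that removes the stable letter $s$ — the only points needing care being the correct identification of the $V_1$-meridian with $x_1\cdots x_n$ and of the two slope identifications in the Heegaard splitting of $S^3$.
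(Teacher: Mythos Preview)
The paper does not prove Theorem~\ref{thm4}; it is stated as a classical result of Artin and attributed to \cite{Ar1} (with a pointer to \cite[Theorem~9.2.2]{N} later in the proof of Theorem~\ref{thm6}). So there is no argument in the paper to compare your proposal against.

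That said, your outline is the standard route and is correct. A couple of minor comments. First, your van Kampen bookkeeping is right: with $V_1=D^2\times S^1$ containing $\widehat\beta$, the fiber boundary $\partial D^2\times\{\mathrm{pt}\}$ represents $x_1\cdots x_n$ in $\pi_1(V_1-\widehat\beta)$ and the $S^1$-direction represents $s$; in the genus-one splitting of $S^3$ the $V_2$-meridian disc kills $s$ and the identification $t=x_1\cdots x_n$ introduces no new generator, giving exactly $\langle x_1,\dots,x_n\mid x_j^{-1}\beta_j\rangle$. Second, you correctly isolate the only nontrivial input: the algebraic half of Artin's theorem asserting that any endomorphism $\phi$ of $F$ with $\phi(x_j)$ conjugate to $x_{\mu(j)}$ and $\phi(x_1\cdots x_n)=x_1\cdots x_n$ is already an automorphism induced by a braid. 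This is exactly what \cite{Ar1} proves (combinatorially, via ``combing''), and since the paper is content to cite it, quoting it is entirely appropriate here.
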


There are also characterizations of 2-knot groups (see \cite{Gon2} and 
\cite{Ka}): 

\begin{thm}[Gonz\'alez-Acu\~na]\label{thm5}
A group belongs to $\K_2$ if and only if it has a presentation of the form
$$\langle x_1,\ldots, x_n : x_{2i-1}^{-1} x_{2i},\quad x_j^{-1}\beta_j,\quad 
1\le i\le h,\quad 1\le j\le n\rangle$$
satisfying {\rm (1)} and {\rm (2)} above and also 
\begin{itemize}
\item[(3)] The permutations $\mu$ and $\prod_{i=1}^h (2i-1, 2i)$ 
generate a transitive group of permutations of $\{1,2,\ldots,n\}$
\item[(4)] $\langle x_1,\ldots,x_n : x_j^{-1} \beta_j$, $1\le j\le n\rangle$
and $\langle x_1,\ldots, x_n : x_j^{-1} \beta'_j$, $1\le j\le n\rangle$ 
present free groups where 
$$\beta'_j = 
\begin{cases} 
x_{j+1} \beta_{j+1} x_{j+1}^{-1}&\text{if $j$ is odd and $j<2h$}\\
\beta_{j-1}&\text{if $j$ is even and $j\le 2h$}\\
\beta_j&\text{if $j> 2h$}
\end{cases}
$$
\end{itemize}
\end{thm}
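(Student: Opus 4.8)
The plan is to translate, in both directions, between a $2$-knot $\Sigma\subset S^{4}$ placed in a suitable normal form and combinatorial data of exactly the kind displayed in the statement, and to show that under this translation the geometric condition ``$\Sigma$ is an embedded $2$-sphere'' corresponds precisely to conditions (1)--(4). The normal form I would use is a broken surface diagram of $\Sigma$ in general position in $\mathbb{R}^{3}$ (equivalently, a ``hyperbolic'' Morse splitting of $\Sigma$ with all minima below all saddles below all maxima and with trivial links as the two extreme regular cross-sections). Cutting $\Sigma$ along the preimages of the double-point curves of the diagram decomposes $\Sigma$ into \emph{sheets}; these become the generators $x_{1},\ldots,x_{n}$, read as meridians of $\Sigma$. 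Each double-point curve contributes a Wirtinger relation $x_{j}^{-1}\beta_{j}$ with $\beta_{j}$ conjugate in the free group to a meridian, which is condition (1), and after a harmless normalization there is one such relation per sheet. Each of the $h$ branch points of the diagram forces two sheets to agree, giving the relations $x_{2i-1}^{-1}x_{2i}$, $1\le i\le h$. The ``closing up'' of the diagram (that $\Sigma$ is closed) is the product identity (2); connectedness of $\Sigma$ is the transitivity in (3); and triple points contribute only relations already implied by the others. That $\pi_{1}(S^{4}-\Sigma)$ is given by this presentation is the standard Wirtinger-type computation for broken surface diagrams, via van Kampen's theorem. Since $\K_{2}\subset\S$, Theorem~\ref{thm3} already tells us a $2$-knot group has \emph{some} Wirtinger presentation; the content here is to upgrade this to the form reflecting genus $0$.

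For the ``only if'' direction it then remains to read off condition (4). Near the bottom extreme level the diagram cuts out a trivial link $L\subset S^{3}$, and the subpresentation $\langle x_{1},\ldots,x_{n}:x_{j}^{-1}\beta_{j}\rangle$ is a Wirtinger presentation of $\pi_{1}(S^{3}-L)$, hence---being the group of an unlink---free. The substitution $\beta_{j}\mapsto\beta'_{j}$ records the algebraic effect on this presentation of pushing each of the $h$ bands past the level, with the pairing $2i-1\leftrightarrow 2i$ dictated by the branch points; afterwards the cross-section is the trivial link $L'$ cut out near the top, so $\langle x_{1},\ldots,x_{n}:x_{j}^{-1}\beta'_{j}\rangle$ presents $\pi_{1}(S^{3}-L')$ and is again free. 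The only non-formal inputs are the elementary Tietze reduction of a Wirtinger presentation of an unlink to a free group, and the verification that a band slide induces exactly the stated substitution.

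For the ``if'' direction I would run this backwards. Given a presentation of the stated form, conditions (1)--(3) let one build, \`a la Artin's proof of Theorem~\ref{thm4}, a link $L\subset S^{3}$ (via a braid-like diagram) whose group is presented by $\langle x_{1},\ldots,x_{n}:x_{j}^{-1}\beta_{j}\rangle$; freeness of that group forces $L$ to be a trivial link (a link in $S^{3}$ with free group is trivial, a standard consequence of the loop theorem), so $L$ is capped below by a system of trivial disks. One then attaches $h$ embedded bands realizing the substitutions $\beta_{j}\mapsto\beta'_{j}$; freeness of $\langle x_{1},\ldots,x_{n}:x_{j}^{-1}\beta'_{j}\rangle$ forces the resulting link $L'$ to be trivial, and it is capped above by trivial disks. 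The outcome is a closed surface $\Sigma\subset S^{4}$, connected by (3); an Euler-characteristic count (which (1)--(4) are arranged to make come out to $2$) then gives $\chi(\Sigma)=2$, so $\Sigma$ is a $2$-sphere. Van Kampen applied to the resulting decomposition of $S^{4}$ (the disk systems add nothing new to $\pi_{1}$) identifies $\pi_{1}(S^{4}-\Sigma)$ with $G$, so $G\in\K_{2}$.

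The main obstacle, carrying most of the real content, is establishing the normal form with the decisive property that the two extreme cross-section links are \emph{trivial}, and matching the band moves to the substitution $\beta\mapsto\beta'$ with the correct pairing of identified sheets: this is exactly what pins $\Sigma$ down to genus $0$ rather than ``some closed surface,'' and it is the reason conditions (3) and (4) take the precise form stated. A secondary point needing care in the ``if'' direction is checking that the combinatorial recipe is realized by an honestly embedded, locally flat surface in $S^{4}$; here the triviality of $L$ and $L'$ is used a second time, to ensure that the capping disk systems can be chosen disjointly embedded below and above the bands.
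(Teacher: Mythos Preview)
The paper does not give a proof of Theorem~\ref{thm5}: it is stated as a cited result, attributed to Gonz\'alez-Acu\~na with reference to \cite{Gon2} (see also \cite{Ka}). There is therefore no proof in the present paper against which to compare your proposal.

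For what it is worth, your outline is in the spirit of the argument in \cite{Gon2}: put the $2$-knot in a hyperbolic (banded-unlink) normal form, read a Wirtinger-type presentation from the sheets and double curves of a broken surface diagram, and match conditions (1)--(4) to, respectively, the Wirtinger form of the relations, closedness, connectedness, and triviality of the two extreme cross-section links. One point you should tighten in the ``if'' direction: the claim that ``a link in $S^{3}$ with free group is trivial'' requires the rank hypothesis (free of rank equal to the number of components), and you should check explicitly that the presentations $\langle x_{1},\ldots,x_{n}:x_{j}^{-1}\beta_{j}\rangle$ and $\langle x_{1},\ldots,x_{n}:x_{j}^{-1}\beta'_{j}\rangle$ record this correctly for the links you build.
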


We recall that a set $S$ is {\em recursively enumerable\/} if there 
is an algorithm (effective procedure) that lists the elements of $S$.
For example it is clear that the set of all finite presentations of 
groups is recursively enumerable. 
If $S$ is recursively enumerable, a subset $R\subset S$ is {\em recursive\/} 
if both $R$ and $S\setminus R$ are recursively enumerable; equivalently,
there is an algorithm to decide whether or not a given element of $S$ 
belongs to $R$. 
Clearly the set of presentations in Theorem~\ref{thm4} is a recursive 
subset of the set of finite presentations and, as explained in \cite{Gon2}, 
so is the set of presentations in Theorem~\ref{thm5}.
\bigskip

If $\A \subset\G$, let $P(\A)$ denote the set of all finite presentations 
of members of $\A$. 
In order for the decision problem for $\B \subsetneq \A$ in 
Theorem~\ref{thm1.1} to be well-posed, it is necessary that the 
corresponding set of presentations $P(\A)$ be recursively enumerable. 
We now show that if $\A$ is $\K_0,\K_1,\K_2,\K_3, \S$ or $\M$ then 
$P(\A)$ is recursively enumerable.

Let $\P$ be the finite presentation $\langle x_1,\ldots,x_m : r_1,\ldots,
r_n\rangle$. 
An {\it identity\/} in $\P$ is a $t$-tuple $\pi = (p_1,\ldots,p_t)$ 
where each $p_i$ is a conjugate, in the free group $F$ on $x_1,\ldots,x_m$, 
of an element of $\{r_1,r_1^{-1},\ldots,r_n,r_n^{-1}\}$ and $p_1\cdots p_t=1$
in $F$.

If $K_{\P}$ is the standard 2-complex associated to $\P$ (well-defined up 
to homotopy \hbox{equivalence}) and $\pi$ is an identity in $\P$, then there is 
an associated map $f$ of an oriented 2-sphere $S^2$ into $K_{\P}$
(for details see \cite[pages 157, 150 and 151]{LS}); 
denote by $[\pi]$ the image under ${f_* :H_2(S^2)\to H_2(K_{\P})}$ of the 
canonical generator $H_2 (S^2)$. 
If $\pi_1,\ldots,\pi_s$ are finitely many identities in 
$\langle x_1,\ldots, x_m :r_1,\ldots,r_n\rangle$ we will say that 
$\langle x_1,\ldots,x_m : r_1,\ldots, r_n ; \pi_1,\ldots,\pi_s\rangle$ 
is a {\em presentation with identities\/}. 
If $\P$ is a presentation, $|\P|$ will denote the group presented by $\P$.

\begin{lem}\label{lem1} 
Let $\Re$ be a recursively enumerable set of finite presentations. 
Then $\{\P \in \Re :H_2 (|\P|) =0\}$ is recursively enumerable.
\end{lem}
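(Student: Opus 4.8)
The plan is to combine two semidecision procedures so that, for each presentation $\P$ in $\Re$, we can recursively enumerate certificates witnessing $H_2(|\P|)=0$. By Hopf's formula, if $\P = \langle x_1,\ldots,x_m : r_1,\ldots,r_n\rangle$ and $K_{\P}$ is its standard $2$-complex, then $H_2(|\P|)$ is the quotient of $H_2(K_{\P}) = \ker(\partial_2)$ by the subgroup generated by the classes $[\pi]$ of identities $\pi$ in $\P$. So $H_2(|\P|)=0$ is equivalent to the assertion that the finitely generated abelian group $H_2(K_{\P})$ is generated by $\{[\pi] : \pi \text{ an identity in } \P\}$. Both the group $H_2(K_{\P})$ and each class $[\pi]$ are computable: $H_2(K_{\P})$ is the kernel of an explicit integer matrix (the boundary map $C_2 \to C_1$ of the chain complex of $K_{\P}$), and given an identity $\pi = (p_1,\ldots,p_t)$ the class $[\pi]$ is the image in that kernel of the $2$-chain read off from the $p_i$ (each $p_i$ contributes $\pm$ the basis element of the $2$-cell it is conjugate to); all of this is a finite computation from the word data of $\pi$.

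First I would note that the set of all identities in a given presentation $\P$ is recursively enumerable: an identity is a finite tuple of words $(p_1,\ldots,p_t)$ together with, for each $i$, a conjugating word $u_i$ and an index $k_i$ certifying $p_i = u_i^{-1} r_{k_i}^{\pm 1} u_i$ in $F$, subject to the freely-reducible identity $p_1\cdots p_t = 1$ in $F$; all these conditions are decidable (free reduction), so one can enumerate all such tuples. Second, since $\Re$ itself is recursively enumerable, the set of pairs $(\P, (\pi_1,\ldots,\pi_s))$ with $\P\in\Re$ and each $\pi_j$ an identity in $\P$ is recursively enumerable. Third, for each such pair one computes $H_2(K_{\P})$ as the kernel lattice of the boundary matrix and the finite list of vectors $[\pi_1],\ldots,[\pi_s]$ in that lattice, and then \emph{decides} whether these vectors generate the whole lattice --- this is a finite linear-algebra computation over $\zed$ (e.g. via Smith normal form, checking that the quotient lattice is trivial). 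So the set of such pairs for which the $[\pi_j]$ generate $H_2(K_{\P})$ is recursive, hence recursively enumerable; projecting to the first coordinate gives a recursive enumeration of $\{\P\in\Re : H_2(|\P|)=0\}$.

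The one genuine point to get right --- and the place where the argument could go wrong if stated carelessly --- is the claim that $H_2(|\P|)=0$ \emph{iff} finitely many identities suffice to generate $H_2(K_{\P})$. The ``if'' direction is immediate from Hopf's formula. For ``only if'': $H_2(|\P|)=0$ says the (possibly infinite) family $\{[\pi]\}$ generates the finitely generated group $H_2(K_{\P})$, and since a finitely generated group that is generated by a set is already generated by a finite subset of that set, some finite subfamily works. Thus the enumeration will eventually find a witnessing finite list of identities for every $\P\in\Re$ with $H_2(|\P|)=0$, and (by the decidability of the lattice-generation check) will never falsely certify a $\P$ with $H_2(|\P|)\neq 0$. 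Note this procedure only semidecides $H_2=0$; it does \emph{not} semidecide $H_2\neq 0$, which is consistent with the fact, mentioned earlier in the paper, that triviality of $H_2$ of a finitely presented group is in general undecidable.
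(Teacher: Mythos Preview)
Your proof is correct and follows essentially the same approach as the paper: both enumerate pairs consisting of a presentation $\P\in\Re$ together with a finite list of identities $\pi_1,\ldots,\pi_s$, check (decidably) whether $[\pi_1],\ldots,[\pi_s]$ generate $H_2(K_\P)$, and output $\P$ when they do. You supply more detail than the paper on why each step is effective (enumerability of identities, the Smith normal form check, and why finitely many identities suffice when $H_2(|\P|)=0$), but the underlying idea is identical.
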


\begin{proof} 
There is a recursive enumeration $\L$ of all the presentations with 
identities $\langle x_1,\ldots,x_m : r_1,\ldots, r_n; \pi_1,\ldots,\pi_s
\rangle$ such that $\P \in \Re$ and $[\pi_1],\ldots,[\pi_s]$ generate 
$H_2(K_{\P})$ where $\P = \langle x_1,\ldots,x_m :r_1,\ldots,r_n\rangle$.

Notice that if $\langle x_1,\ldots,x_m :r_1,\ldots,r_n; \pi_1,\ldots,\pi_s
\rangle$ is in $\L$ and $\P = \langle x_1,\ldots, x_m :r_1,\ldots, r_n
\rangle$ then $H_2 (|\P|) =0$ since every element of $H_2 (K_{\P})$ is 
spherical. 
Conversely if $H_2 (|\P|) =0$ where $\P = \langle x_1,\ldots,x_m: r_1,\ldots,
r_n\rangle$ then $\langle x_1,\ldots,x_m :r_1,\ldots,r_n; \pi_1,\ldots ,
\pi_s\rangle$ is in $\L$ for some choice of identities $\pi_1,\ldots,\pi_s$ 
in $\P$.

Hence, if we strike out the identities in $\L$ and eliminate repetitions 
we obtain a list of all the finite presentations $\P\in\Re$ such that 
$H_2 (|\P|) =0$.
\end{proof}

\begin{lem}\label{lem2} 
Let $\Re$ be a recursively enumerable set of finite presentations. 
Let $\wtilde \Re$ be the set of finite presentations $\wtilde\P$ such that 
$|\wtilde \P| \cong |\P|$ for some $\P \in \Re$. 
Then $\wtilde\Re$ is recursively enumerable.
\end{lem}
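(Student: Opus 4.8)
The plan is to appeal to Tietze's theorem: two \emph{finite} presentations present isomorphic groups if and only if one can be transformed into the other by a finite sequence of Tietze transformations (see \cite{LS}). Since each Tietze transformation is an effective operation, and the set of transformations applicable to a given finite presentation is recursively enumerable uniformly in the presentation, one can dovetail to recursively enumerate all finite presentations obtainable from members of $\Re$ by finite chains of Tietze transformations. By Tietze's theorem this enumeration is exactly $\wtilde\Re$.

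In more detail, I would first recall the elementary Tietze transformations of a finite presentation $\langle x_1,\ldots,x_m:r_1,\ldots,r_n\rangle$: (T1) adjoin to the relator list a word $r$ lying in the normal closure of $\{r_1,\ldots,r_n\}$ in the free group $F$ on $x_1,\ldots,x_m$; (T1$'$) delete a relator that lies in the normal closure of the others; (T2) adjoin a new generator $x_{m+1}$ together with a relator $x_{m+1}w^{-1}$, where $w$ is any word in $x_1,\ldots,x_m$; (T2$'$) the inverse operation. The one point requiring attention is effectivity: for a fixed finite set of relators, the set of words in its normal closure in $F$ is recursively enumerable (enumerate all finite products of conjugates of the $r_i^{\pm 1}$ and freely reduce), and the words $w$ available for (T2) are trivially recursively enumerable; hence the set of presentations obtainable from a given finite presentation by one Tietze transformation is recursively enumerable, uniformly.

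Next I would set up the enumeration. Fix a recursive enumeration $\P_1,\P_2,\ldots$ of $\Re$. By a standard diagonal/dovetailing argument, build a procedure maintaining a growing list $\L$: by stage $N$ it has put $\P_1,\ldots,\P_{k(N)}$ onto $\L$ and, for each presentation already on $\L$, has run finitely many steps of the (uniform) enumeration of its one-step Tietze successors, placing each successor found onto $\L$. Then every finite presentation joined to some $\P_i$ by a finite chain of Tietze transformations eventually appears on $\L$, and conversely every entry of $\L$ is so joined; by Tietze's theorem, $|\wtilde\P|\cong|\P_i|$ for some $i$ iff $\wtilde\P$ is joined to $\P_i$ by such a chain, so $\L$, with repetitions deleted, is a recursive enumeration of $\wtilde\Re$. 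The only nonroutine ingredient is the recursive enumerability of the normal closure of a finite set of words in a free group, which underlies the effectivity of (T1), (T1$'$) and (T2$'$); the rest is bookkeeping of the dovetailing.
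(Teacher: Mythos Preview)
Your proof is correct and follows essentially the same approach as the paper: enumerate $\Re$ as $\P_1,\P_2,\ldots$, use Tietze's theorem to recursively enumerate, for each $\P_i$, all finite presentations of the same group, and dovetail over the resulting double sequence. You simply supply more detail on the effectivity of the Tietze moves and the dovetailing bookkeeping than the paper does.
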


\begin{proof} 
Let $\P_1,\P_2,\P_3,\cdots $ be a recursive enumeration of the elements 
of $\Re$.

Using Tietze's Theorem one can give, for any $i$, a recursive enumeration 
$\P_{i1}, \P_{i2}, \P_{i3},\cdots$ of all finite presentations defining 
the same group as $\P_i$. 
Hence, from $\P_{ij}$, $i,j\in\nat$ one obtains a recursive enumeration 
of $\wtilde \Re$.
\end{proof}

We use the notation  of Lemma~\ref{lem2} in the proof of the following 
theorem.

\begin{thm}\label{thm6} 
Let $\A$ be one of the classes $\K_0,\K_1,\K_2,\K_3,\S,\M$. 
Then $P(\A)$ is recursively enumerable.
\end{thm}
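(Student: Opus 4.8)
The idea is to handle each class $\A$ by exhibiting $P(\A)$ as obtained from an already-recursively-enumerable set of presentations by closing under Tietze moves (Lemma~\ref{lem2}) and, where necessary, intersecting with the presentations having $H_2=0$ (Lemma~\ref{lem1}). For the classes defined by explicit presentation recipes the starting set is immediate. For $\K_1$ and $\K_2$ one uses Theorems~\ref{thm4} and \ref{thm5}: the set of presentations satisfying conditions (1)--(3) there (and, for $\K_2$, the form of the relators) is a \emph{recursive} subset of the set of all finite presentations, since conditions (1)--(3) are decidable — conjugacy of $\beta_j$ to a generator in a free group is decidable, the word identity in (2) is checkable, and the permutation conditions are finite checks; call these sets $\Re_1$ and $\Re_2'$. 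For $\K_2$ we must also impose condition (4), that two auxiliary presentations present free groups; being free is \emph{not} obviously decidable, but ``presents a free group'' is a recursively enumerable condition on a presentation (enumerate Tietze transformations looking for a presentation on a free-basis with no relators, equivalently use that one can recognize free groups among finitely presented groups is delicate — instead I would cite that the subset in Theorem~\ref{thm5} is recursive, which is asserted in the paragraph following that theorem, attributed to \cite{Gon2}). So in each case we get a recursive, hence recursively enumerable, set $\Re_\A$ of presentations whose underlying groups are exactly the members of $\A$ which \emph{admit} a presentation of the prescribed shape; by the cited theorems these underlying groups exhaust $\A$.

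Then the key step is: $P(\A) = \wtilde{\Re_\A}$ in the notation of Lemma~\ref{lem2}, because a group lies in $\A$ iff \emph{some} presentation of it has the required form, iff it is isomorphic to $|\P|$ for some $\P\in\Re_\A$. Lemma~\ref{lem2} then gives that $P(\A)$ is recursively enumerable. This disposes of $\K_1$, $\K_2$, $\S$ (via Theorem~\ref{thm3}: take $\Re_\S$ to be Wirtinger presentations with $H_1\cong\zed$, a recursive set since a presentation being Wirtinger is a syntactic check and $H_1$ is computable from the abelianized relation matrix), and $\M$ (via the characterization displayed after Theorem~\ref{thm:Kervaire}: take $\Re_\M$ to be finite presentations $\langle x_1,\dots,x_m\mid r_1,\dots,r_n\rangle$ together with a word $t$ and an explicit certificate that $t_1,\dots,t_m$, the generators, all lie in $\LL t\GG$; more cleanly, $P(\M)$ is r.e. because ``$\exists t$ with $\LL t\GG=G$'' is r.e. — enumerate candidate words $t$ and candidate expressions of each generator as a product of conjugates of $t^{\pm1}$ and relators).

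For $\K_0$ the statement is trivial: $P(\K_0)$ is the set of finite presentations of $\zed$, which is $\wtilde{\{\langle x\mid\ \rangle\}}$, recursively enumerable by Lemma~\ref{lem2} (or directly by Tietze). For $\K_3$ we use Kervaire's Theorem~\ref{thm:Kervaire}: a presentation $\P$ presents a group in $\K_3$ iff $H_1(|\P|)\cong\zed$, $H_2(|\P|)=0$, and $|\P|$ is normally generated by a single element. The first condition is decidable (compute the Smith normal form of the abelianization matrix); the third is recursively enumerable exactly as for $\M$; so the set $\Re$ of presentations satisfying conditions one and three is recursively enumerable, and then Lemma~\ref{lem1} applied to $\Re$ yields that $\{\P\in\Re : H_2(|\P|)=0\}=P(\K_3)$ is recursively enumerable. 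Note here we do \emph{not} need to pass through Lemma~\ref{lem2} separately, since the conditions in Kervaire's theorem are already intrinsic (isomorphism-invariant), so every presentation of a $\K_3$-group already satisfies them.

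The main obstacle is the class $\K_2$, where condition (4) of Theorem~\ref{thm5} asks that certain presentations define free groups. The cleanest route is to invoke the sentence in the excerpt — immediately after Theorem~\ref{thm5} — stating that the set of presentations appearing there is a \emph{recursive} subset of all finite presentations (as explained in \cite{Gon2}); granting that, $\K_2$ is handled exactly like $\K_1$. If one wanted to re-prove recursiveness of that set, the point is that ``$\langle x_1,\dots,x_n\mid x_j^{-1}\beta_j\rangle$ presents a free group'' can be decided because such a presentation is \emph{balanced with deficiency making it a candidate free group}, and one can run two semi-decision procedures: one enumerating proofs that it is free (via Tietze moves to a relator-free presentation, or via Magnus' method), and one enumerating proofs that it is not (e.g. detecting a nontrivial element of $H_2$ or finite-index subgroup obstructions) — but making ``not free'' genuinely r.e. is the delicate part, which is precisely why we lean on the already-published recursiveness statement rather than redo it here. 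For $\S$ and $\M$ no such difficulty arises: their membership conditions are manifestly recursively enumerable, so those cases are routine.
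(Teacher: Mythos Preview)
Your proposal is correct and follows essentially the same architecture as the paper: for each class, identify a recursively enumerable (in fact usually recursive) set $\Re_\A$ of ``canonical'' presentations via the relevant characterization theorem, then close up under Tietze equivalence via Lemma~\ref{lem2}, invoking Lemma~\ref{lem1} where an $H_2=0$ condition must be imposed.

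Two small points of comparison are worth noting. For $\M$, the paper uses a slicker trick than your certificate enumeration: it recursively enumerates all finite presentations of the trivial group (with at least one generator) and deletes the first relator from each; since $G\in\M$ iff adding a single relator to some presentation of $G$ trivializes it, this directly produces \emph{every} presentation of every $\M$-group, with no need for a subsequent Tietze closure. For $\K_3$, rather than verifying Kervaire's three conditions directly as you do, the paper first enumerates $P(\S)$ (already done) and then applies Lemma~\ref{lem1} to that enumeration, using that $\K_3$ is exactly $\{G\in\S:H_2(G)=0\}$; your route is equally valid and arguably more transparent. Your treatment of $\S$ is in fact slightly more careful than the paper's terse ``take $\Re$ to be the Wirtinger presentations,'' since you explicitly impose the (decidable) condition $H_1\cong\zed$ required by Theorem~\ref{thm3}.
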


\begin{proof}
\begin{itemize}
\item[(1)] Let $\A = \K_0$.
Take $\Re = \{\langle x:\ \rangle\}$ in Lemma~\ref{lem2}.

\item[(2)] $\A = \K_1$. By \cite[Satz 10]{Ar1} 
(see, for example, \cite[Theorem 9.2.2]{N})
there is a recursive set $\Re$ of finite presentations such that 
$\wtilde \Re$ is the set of presentations of members of $\K_1$.

\item[(3)] $\A=\K_2$. Use the same argument appealing to \cite{Gon2} 
instead of \cite{Ar1}.

\item[(4)] $\A= \S$. Again, use the same argument taking $\Re$ to be the set 
of Wirtinger presentations.

\item[(5)] $\A=\M$. Using Tietze's Theorem enumerate recursively all finite 
presentations of the trivial group with a positive number of generators. 
Deleting the first relator from each presentation 
in this list we obtain a list, with repetitions,
of all the presentations of members of $\M$ with a positive number of 
generators.

\item[(6)] $\A=\K_3$. Take a recursive enumeration $\Re$ of the presentations
$\P$ of members of $\S$ and apply Lemma~\ref{lem1}.
\end{itemize}
\end{proof}

If $\B \subset \A$ ($\subset \G$), and $P(\A)$ is recursively enumerable, 
we say that the {\em recognition problem 
$\Rec (\A,\B)$ is solvable\/} if there exists an algorithm
which decides, given a finite presentation of a group $G\in \A$, whether 
or not $G\in \B$; otherwise, {\em unsolvable\/}. 
Clearly if $\C \subset \B\subset \A$, with $P(\A)$ and $P(\B)$ 
recursively enumerable,  then $\Rec (\B,\C)$ unsolvable implies 
$\Rec (\A,\C)$ unsolvable.
The fact that $\Rec (\G,\{1\})$ is unsolvable underlies many of our results.

\section{Effective embedding theorems\\ and the unsolvability of some 
recognition problems}

In this section we prove Theorem~\ref{thm1.1} except for the case 
$(\A,\B) = (\G,\M)$. 
The proofs will make use of the following proposition.

\begin{prop}\label{prop1}
There is a computable function which takes an arbitrary finite presentation 
of a group $G$ and produces a finite presentation of a group $P$ such that 
\begin{itemize}
\item[(1)] $G$ embeds in $P$;
\item[(2)] $P$ is perfect, i.e., $H_1 (P)=0$;
\item[(3)] if $G=1$, then $P=1$.
\end{itemize}
\end{prop}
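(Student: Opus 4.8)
The plan is to realize $P$ as an amalgam or HNN-type extension of $G$ in which enough new relations are added to kill $H_1$, while arranging that these relations become trivial when $G=1$, and that $G$ still embeds. The standard trick for making a group perfect functorially is to embed $G$ into its "universal central-type" construction; but we need something both \emph{effective} (a computable function on presentations) and \emph{normalized} (sending $1$ to $1$). A convenient choice is to use a binate or acyclic construction, but the cleanest explicit route is the following. Start from a presentation $\langle x_1,\dots,x_m\mid r_1,\dots,r_n\rangle$ of $G$. Introduce two new generators $a,b$ and form the group $P$ generated by $x_1,\dots,x_m,a,b$ with the relations of $G$ together with relations expressing each $x_i$ as a commutator (or product of commutators) in the enlarged group — for instance, adjoin a copy of a fixed perfect group such as $A_5$ or a binary icosahedral group on $\{a,b\}$, and add relations $x_i = c_i$ where $c_i$ is a word in $a,b$ lying in the commutator subgroup, chosen so that $c_i=1$ whenever $a=b=1$. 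Because $A_5$ is perfect, the new generators contribute nothing to $H_1$; because each $x_i$ is set equal to an element of $[P,P]$, the images of the $x_i$ die in $H_1(P)$; hence $H_1(P)=0$, giving (2).

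For (1), the embedding $G\hookrightarrow P$, I would present $P$ instead as an explicit iterated amalgamated free product or HNN extension so that Britton's Lemma (or the normal-form theorem for amalgams) applies. The model to imitate is the classical embedding of an arbitrary group into an acyclic group: first embed $G$ into $G*F$ for a suitable free group, then use mapping-torus/HNN moves that identify each generator with a commutator while keeping the original factor as a retract-free subgroup. The key is to choose the stable letters and the associated subgroups to be free (or malnormal) so that Britton's Lemma guarantees no collapse of $G$. Concretely, I expect to take $P$ to be built from $G$ by finitely many HNN extensions with free associated subgroups, where the $k$-th extension introduces a stable letter $t_k$ conjugating a free generating set to another free generating set in such a way that, in the resulting group, $x_k$ equals $[t_k,\cdot]$ for an appropriate partner. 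This is where I would invoke the acyclic-embedding technology already standard in the literature and adapt it to be presentation-computable.

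Condition (3) — if $G=1$ then $P=1$ — is the constraint that forces care in the construction, and I expect it to be the main obstacle. A naive acyclic embedding adjoins a fixed nontrivial acyclic group (e.g.\ a binary-icosahedral-based group) as a direct or free factor, which would violate (3). The fix is that every newly adjoined generator must be expressible, via the added relations, in terms of the $x_i$'s when the construction is "run in reverse," so that setting all $x_i=1$ forces all new generators to be $1$. In the HNN picture this means: the stable letters $t_k$ should not merely be adjoined freely but should come with a defining relation (making them "fake" stable letters in the sense that collapsing $G$ collapses them too), while \emph{still} behaving like genuine HNN stable letters as long as $G$ is nontrivial enough for Britton's Lemma to bite. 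Reconciling these two demands — Britton-style faithfulness for general $G$, total collapse for $G=1$ — is the delicate point; I would handle it by building the relations so that each $t_k$ equals a word in $x_1,\dots,x_{k}$ (and earlier $t_j$'s) times a genuine stable-letter part, arranged so that the "genuine" part is free over $G$ but the whole thing telescopes to $1$ under $G\to 1$.

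Finally, effectivity (the existence of a \emph{computable} function producing the presentation of $P$) is immediate once the construction above is described: the number of new generators is a fixed function of $m$, the new relators are explicit words in the old generators and the new ones, and the whole recipe is uniform in the input presentation. So after the three structural points are in place, one just records that every step — enlarging the generating set, writing down the commutator relations, adding the HNN relations — is a primitive operation on finite words, hence the map $\langle x\mid r\rangle\mapsto\langle x,\text{new}\mid r,\text{new}\rangle$ is computable. I would close the proof by verifying $H_1(P)=0$ from the abelianized presentation matrix and $G\hookrightarrow P$ from Britton's Lemma, and checking $P=1$ when $n=0,m=0$ directly from the relations.
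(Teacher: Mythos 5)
Your proposal correctly isolates the tension at the heart of the statement --- making the construction faithful (Britton/normal-form style) for $G\ne 1$ while forcing total collapse for $G=1$ --- but it never resolves it, so there is a genuine gap. Your first concrete attempt (adjoin a fixed perfect group such as $A_5$ on $a,b$ and impose $x_i=c_i$ with $c_i$ a commutator word) does not give (1): setting $x_i=c_i$ transports every relation satisfied by the $c_i$ in $A_5$ back into $G$, and since $G$ is arbitrary and $A_5$ is finite this is not an amalgamated product over an isomorphic pair of subgroups, so $G$ is in general badly quotiented rather than embedded (it also fails (3) outright when $m=0$, since then $P\cong A_5$). You acknowledge this and switch to an ``iterated HNN/amalgam with free associated subgroups'' plan, but the plan is never instantiated: no relations are written down, and the step you yourself call ``the delicate point'' --- choosing relations that are simultaneously a genuine amalgam structure when $G\ne1$ and degenerate completely when $G=1$ --- is exactly the content of the proposition, not a detail that can be deferred. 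As it stands, (1) and (3) are asserted to be reconcilable but not proved to be.

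For comparison, the paper's proof resolves this with one explicit set of $m+4$ relations on $(G*\langle a,\alpha\rangle)*\langle b,\beta\rangle$: each relation equates an element of $G*\langle a,\alpha\rangle$ with a word in $b,\beta$, the right-hand sides freely generate a rank-$(m+4)$ subgroup of $\langle b,\beta\rangle$, and the left-hand sides --- crucially including the products of commutators $[x_1,a]\cdots[x_m,a]$ and $[x_1,\alpha]\cdots[x_m,\alpha]$ --- freely generate a rank-$(m+4)$ subgroup of $G*\langle a,\alpha\rangle$ precisely when $G\ne1$. Thus for $G\ne1$ one gets an amalgamated free product $(G*\langle a,\alpha\rangle)*_E\langle b,\beta\rangle$ and hence the embedding, while for $G=1$ those commutator products vanish, the relations force $b=\beta=1$, then $a=\alpha=1$, and $P=1$; perfectness is read off the abelianized relations just as you propose. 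Your abelianization and computability remarks are fine, but to complete your argument you would need to supply (and verify, via the normal form theorem) a concrete family of relations playing the role of the paper's (i)--(v), with the same ``nondegenerate iff $G\ne1$'' mechanism.
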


\begin{adden}\label{adden1} 
In Proposition~\ref{prop1}, we may assume in addition that 
\begin{itemize}
\item[(4)] if $G\ne1$ then $H_2(P)$ is infinite.
\end{itemize}
\end{adden}

\begin{proof}[Proof of Proposition~\ref{prop1}] 
Suppose we have a finite presentation for $G$ with $m$ generators, 
$x_1,\ldots,x_m$. 
Adjoin new generators $a,\alpha,b,\beta$ to form the iterated free product 
$(G * \langle a,\alpha\rangle) * \langle b,\beta\rangle$ of $G$ with two 
free groups of rank~2.  
Now add $m+4$ additional relations, as follows, to obtain $P$ (compare 
\cite[Proof of Theorem~3]{Gor2}).
\begin{itemize}
\item[(i)] $a\alpha a^{-1} = b^2$
\item[(ii)] $\alpha a\alpha^{-1} = b\beta b^{-1}$ 
\item[(iii)] $a^{2i} x_i \alpha^{2i} = \beta^{2i+2} b\beta^{-2i-2}$\quad 
$(i=1,\ldots,m)$
\item[(iv)] $[x_1,a] \cdots [x_m,a] = \beta^2 b \beta^{-2}$
\item[(v)] $[x_1,\alpha] \cdots [x_m,\alpha] = \beta b\beta b^{-1}\beta^{-1}$.
\end{itemize}
We can see from relations (iv) and (v) that abelianizing $P$ gives 
$b=\beta =0$; then (i) and (ii) imply $a=\alpha=0$, so by (iii), each $x_i=0$.
Thus $P$ is perfect. 
If $G=1$ then (iv) and (v) imply $b=\beta=1$, so, as above, we conclude 
$a=\alpha=1$ as well. 
To show that the natural map from $G$ to $P$ is an embedding, we claim that
when $G\ne 1$, $P$ is an amalgamated free product $(G*\langle a,\alpha\rangle)
*_E\langle b,\beta\rangle$, where $E$ is a free group of rank $m+4$. 
One can check that the words in $b$ and $\beta$ on the right side of 
equations (i)--(v) freely generate a subgroup $E$ of 
$\langle b,\beta\rangle$, and that the elements on the left are a basis for a
free subgroup of $G * \langle a,\alpha\rangle$.
To verify this, one shows that any product corresponding to a freely 
reduced non-trivial word in those elements represents a non-trivial 
element in $\langle b,\beta\rangle$ or the free product $G * \langle a,
\alpha\rangle$, respectively, by showing that it has positive length when 
expressed in normal form \cite[p.187]{LS}. 
We suppress the details, but have chosen the elements such that the 
possibilities of cancellation are sufficiently restricted that these 
may readily be supplied. 
It should be noted that the possibility that several $x_i=1$ does not 
make the elements $a^{2i} x_i \alpha^{2i}$ ill behaved, but we need $G\ne 1$ 
to guarantee that the products $\prod [x_i,a]$ and $\prod[x_i,\alpha]$
do not disappear completely.   
\end{proof}

\begin{proof}[Proof of Addendum~\ref{adden1}] 
Construct $P$ as above except add an additional relation in (iii) with 
$i=m+1$ and $x_{m+1} =1$. 
Everything is unchanged except that if $G\ne1$ then the amalgamating 
subgroup $E$ in the amalgamated free product decomposition of $P$ is now a 
free group of rank $m+5$.
The Mayer-Vietoris sequence of this amalgamated free product decomposition  
gives an exact sequence 
$$H_2 (P) \longrightarrow \zed^{m+5} \longrightarrow H_1(G) \oplus \zed^4\ .$$
Since $H_1(G)$ is generated by $m$ elements, it follows that 
$H_2 (P)$ is infinite.
\end{proof} 

The application of Proposition~\ref{prop1} to our recognition problems will 
make use of the particular construction employed in the proof. 
Here we pause to note that statements (1) and (2) of Proposition~\ref{prop1} 
alone quickly yield the following embedding theorem.

\begin{thm}\label{thm:embed}
There is a computable function which takes a finite presentation of a group 
$G$ and produces a finite presentation of a group $K\in \K_3$ 
and an embedding of $G$ in $K$.
\end{thm}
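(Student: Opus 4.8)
The plan is to get this quickly from Proposition~\ref{prop1} and Kervaire's intrinsic characterization of $\K_3$ (Theorem~\ref{thm:Kervaire}). Applying the computable function of Proposition~\ref{prop1} to the given presentation of $G$ produces a finite presentation of a perfect group $P$ together with an embedding $G\hookrightarrow P$. By Theorem~\ref{thm:Kervaire} a finitely presented group lies in $\K_3$ exactly when $H_1\cong\zed$, $H_2=0$ and the group is the normal closure of a single element, so it suffices to build, effectively from the presentation of $P$, a finite presentation of a group $K$ with these three properties together with an embedding $P\hookrightarrow K$, and then compose $G\hookrightarrow P\hookrightarrow K$.

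The first step is to kill the second homology: effectively embed $P$ (hence $G$) into a finitely presented \emph{superperfect} group $Q$, meaning $H_1(Q)=H_2(Q)=0$. One cannot achieve this just by adjoining relators --- the image of $H_2(G)$ has to die in $H_2(Q)$ although $G$ still embeds --- so this step uses a construction of the kind employed to embed groups into acyclic groups, arranged so as to remain finitely presented and effective. I would moreover take $Q$ to carry an injective endomorphism $\phi\colon Q\to Q$ together with an element $u\in Q$ such that $q=[u,\phi(q)]$ for all $q\in Q$; such a ``binate'' structure is exactly what such constructions supply, and it forces $\phi$ to be injective, since $\phi(q_1)=\phi(q_2)$ gives $q_1=[u,\phi(q_1)]=[u,\phi(q_2)]=q_2$.

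The second step restores $H_1$ and produces weight one without touching $H_2$. Set $K=\langle Q,\,t \mid tqt^{-1}=\phi(q)\ (q\in Q)\rangle$, the ascending HNN extension of $Q$ determined by $\phi$; it is finitely presented because $Q$ is, and $Q\hookrightarrow K$ by Britton's lemma, so $G\hookrightarrow K$. The Mayer--Vietoris sequence of this HNN extension, whose two edge maps into $Q$ are the identity and $\phi$, together with $H_1(Q)=H_2(Q)=0$, gives $H_2(K)=0$ and $H_1(K)\cong\zed$. Finally $K=\LL t\GG$: indeed $K/\LL t\GG\cong Q/\LL\{\,q\phi(q)^{-1}:q\in Q\,\}\GG$, and in this quotient $q=\phi(q)$, hence $q=[u,\phi(q)]=[u,q]$ for every $q$; putting $q=u$ yields $u=u^{2}$, so $u=1$, whence $q=q^{2}$, so $q=1$ for all $q$ and the quotient is trivial. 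Thus $K\in\K_3$ by Theorem~\ref{thm:Kervaire}; every construction above is explicit, so $K$ and the embedding $G\hookrightarrow K$ are given by a computable function of the presentation of $G$.

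The main obstacle is the first step: effectively embedding an arbitrary finitely presented group into a finitely presented superperfect group (carrying the binate endomorphism). Once that is granted, the reduction through Kervaire's theorem, the passage to the ascending HNN extension, and the homology and weight-one computations are all routine.
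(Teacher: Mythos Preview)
Your reduction to Kervaire's characterization and the Mayer--Vietoris bookkeeping in your second step are fine, but the proposal has a genuine gap exactly where you flag it. You need a \emph{finitely presented} superperfect $Q\supset P$ carrying an injective endomorphism $\phi$ and an element $u$ with $q=[u,\phi(q)]$ for \emph{every} $q\in Q$. The standard mitosis/binate constructions that produce such an identity pass to a direct limit and do not stay finitely presented; the Baumslag--Dyer--Miller embedding of a finitely presented group into a finitely presented acyclic group does not come equipped with a global binate endomorphism of this kind. Without that structure your weight-one computation fails: all you would know about $K/\LL t\GG$ is that it is $Q$ modulo the normal closure of the elements $q\phi(q)^{-1}$, with no mechanism forcing this quotient to be trivial. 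So as it stands this is not a proof but a reduction to a lemma that is at least as hard as the theorem.

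The paper bypasses the superperfect step entirely. Starting from the perfect $P$ supplied by Proposition~\ref{prop1}, it sets
\[
K=\bigl\langle\, P\times P,\; s,t,u \;:\; s^{-1}(1,p)s=(p,1),\ t^{-1}(1,p)t=(p,p)\ (p\in P),\ u^{-1}su=s^{2},\ u^{-1}tu=t^{2}\,\bigr\rangle,
\]
an explicit triple HNN extension. Perfectness of $P$ alone gives $H_1(K)\cong\zed$. The Mayer--Vietoris sequences for the first two extensions (associated subgroup $1\times P$, second inclusion the swap and the diagonal respectively) already force $H_2=0$, with no hypothesis on $H_2(P)$; the third extension, over the free group $\langle s,t\rangle$, preserves this. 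Finally $K=\LL u\GG$, since killing $u$ makes $s=s^2$ and $t=t^2$, hence $s=t=1$, and then the remaining relations collapse $P\times P$. Everything is written down explicitly from a presentation of $P$, so the construction is effective. This is both shorter and entirely self-contained, whereas your route, even if the missing lemma could be supplied, would import substantially heavier machinery.
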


\begin{proof} 
By Proposition~\ref{prop1} (1) and (2) we can construct a finite 
presentation of a perfect group $P$ in which $G$ embeds. 
Let $K$ be the iterated HNN extension of $P\times P$
$$\langle P \times P, \ s,t,u :
s^{-1} (1,p) s = (p,1)\ ,\ p\in P\ ,\ 
t^{-1} (1,p) t = (p,p)\ ,\ p\in P\ ,\ 
u^{-1} su = s^2\ ,\ u^{-1} tu = t^2\rangle\ .$$
Note that after the first two HNN extensions, the stable letters $s,t$ 
are a basis for a free subgroup of rank~2.

Since $H_1 (P) =0$, we clearly have $H_1 (K) \cong\zed$. 
Also, the Mayer-Vietoris sequence for HNN extensions implies that $H_2 (K)=0$.
Finally, $K = \ll\! u\!\gg$. 
Hence $K\in \K_3$ by Theorem~\ref{thm:Kervaire}. 
Since $G$ embeds in $P$, it embeds in $K$.
\end{proof}

\begin{cor}\label{cor:iso-copy}
There is a group $K\in \K_3$ which contains an isomorphic copy of every 
finitely presented group.
\end{cor}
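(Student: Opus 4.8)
The plan is to deduce Corollary~\ref{cor:iso-copy} from Theorem~\ref{thm:embed} by a standard diagonalization over a recursive enumeration of all finitely presented groups. First I would fix a recursive enumeration $G_1, G_2, G_3, \ldots$ of finite presentations, one for each finitely presented group up to isomorphism (such an enumeration exists, since the set of all finite presentations is recursively enumerable and every finitely presented group occurs among them). Applying Theorem~\ref{thm:embed} to each $G_i$ yields a finite presentation of a group $K_i \in \K_3$ together with an embedding $G_i \hookrightarrow K_i$. The goal is to amalgamate all the $K_i$ into a single group $K \in \K_3$.

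The key step is to package the $K_i$ into one $\K_3$-group. Using the intrinsic characterization of Theorem~\ref{thm:Kervaire}, it suffices to build a finitely presented group $K$ with $H_1(K) \cong \zed$, $H_2(K) = 0$, $\LL t\GG = K$ for some $t$, and such that each $K_i$ embeds in $K$. One clean way is to first embed each $K_i$ into a single finitely \emph{generated} group $L$ (for instance, the free product of all the $K_i$ is countable, and by the Higman--Neumann--Neumann embedding theorem every countable group embeds in a $2$-generator group), then run the HNN construction used in the proof of Theorem~\ref{thm:embed} on $L$ to absorb its (finite) generating set into a perfect group $P'$ and then into a $\K_3$-group $K$. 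Alternatively, and perhaps more in the spirit of the paper, one observes that the proof of Theorem~\ref{thm:embed} really shows: any group $G$ that embeds in some perfect finitely presented group embeds in some $\K_3$-group; so one reduces to producing a single perfect finitely presented group containing a copy of every finitely presented group, which again follows by applying Proposition~\ref{prop1} to an HNN-style gluing of all the $G_i$.

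The main obstacle is the passage from the countably many groups $K_i$ (or $G_i$) to a \emph{single finitely presented} group containing all of them: a finitely presented group can contain at most countably many finitely generated subgroups up to isomorphism, which is fine, but one must actually exhibit such an embedding. The Higman embedding theorem (every recursively presented group embeds in a finitely presented group) is the natural tool: the restricted direct sum, or a suitable infinite amalgam, of the $G_i$ along the recursive enumeration is recursively presented, hence embeds in a finitely presented group $H$; applying Theorem~\ref{thm:embed} to $H$ then gives the desired $K \in \K_3$ containing a copy of $H$, and therefore of every $G_i$, and therefore of every finitely presented group. One small point to check is that the restricted direct sum (or chosen amalgam) is indeed recursively presented given the recursive enumeration of the $G_i$ — this is routine, since one can recursively list generators and relations. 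With $H$ in hand, Theorem~\ref{thm:embed} finishes the argument immediately.
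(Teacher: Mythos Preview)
Your proposal is correct, and your final paragraph lands on essentially the paper's argument. The paper's proof is a single sentence: Higman proved that there exists a finitely presented group $H$ containing an isomorphic copy of every finitely presented group; applying the embedding theorem (Theorem~\ref{thm:embed}) to this $H$ immediately yields the desired $K\in\K_3$. Your route---enumerating the $G_i$, forming a recursively presented direct sum, and invoking the Higman embedding theorem---is just a re-derivation of Higman's universal finitely presented group, and the earlier discussion of amalgamating the individual $K_i$ is an unnecessary detour once you have that single $H$ in hand.
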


\begin{proof}
This follows from Theorem~\ref{thm7} and Higman's theorem that there 
exists a finitely presented group which contains an isomorphic copy of 
every finitely presented group \cite{Hig}.
\end{proof}

\begin{cor}\label{cor:wordprob}
There is a group $K\in \K_3$ with unsolvable word problem.
\end{cor}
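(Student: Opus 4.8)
The plan is to deduce Corollary~\ref{cor:wordprob} directly from Corollary~\ref{cor:iso-copy} together with the classical fact that there exist finitely presented groups with unsolvable word problem (Novikov--Boone). First I would invoke Corollary~\ref{cor:iso-copy} to obtain a fixed group $K\in\K_3$ that contains an isomorphic copy of every finitely presented group. Then I would fix any finitely presented group $Q$ with unsolvable word problem, so that $Q$ embeds in $K$, say via an explicit embedding $\iota\colon Q\hookrightarrow K$.

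The key step is the standard observation that the word problem passes to finitely generated subgroups in the following sense: if $K$ had solvable word problem, then the word problem for the subgroup $\iota(Q)$ would be solvable too, because the generators of $Q$ map to specific words in the generators of $K$, and a word $w$ in the generators of $Q$ is trivial in $Q$ if and only if its image $\iota(w)$, rewritten as a word in the generators of $K$, is trivial in $K$. Since this rewriting is effective and $\iota$ is injective, a decision procedure for the word problem in $K$ would yield one for $Q$, contradicting the choice of $Q$. Hence $K$ has unsolvable word problem.

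I do not expect any serious obstacle here; the only point requiring care is that the embedding $Q\hookrightarrow K$ is \emph{given} by an explicit computable correspondence of generators (which it is, since Theorem~\ref{thm:embed} produces the embedding effectively and Higman's construction is likewise explicit), so that the reduction from the word problem of $Q$ to that of $K$ is genuinely algorithmic. With that in hand the argument is a one-line contradiction.

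\begin{proof}
By Corollary~\ref{cor:iso-copy} there is a group $K\in\K_3$ containing an isomorphic copy of every finitely presented group. In particular $K$ contains a copy of a finitely presented group $Q$ with unsolvable word problem (such $Q$ exists by the theorem of Novikov and Boone). If the word problem for $K$ were solvable, then, since the generators of $Q$ are mapped to explicit words in the generators of $K$, we could decide triviality of any word in the generators of $Q$ by rewriting it as a word in the generators of $K$ and testing it there; as $Q$ embeds in $K$ this would solve the word problem for $Q$, a contradiction. Hence $K$ has unsolvable word problem.
\end{proof}
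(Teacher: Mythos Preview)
Your proof is correct and is exactly the argument the paper intends: Corollary~\ref{cor:wordprob} is stated without proof immediately after Corollary~\ref{cor:iso-copy}, as an evident consequence of the latter together with Novikov--Boone. Your care about the effectiveness of the embedding is harmless but in fact unnecessary for the bare existence statement, since once some embedding $\iota$ exists the images $\iota(q_i)$ are fixed words in the generators of $K$, and using those words (whatever they are) already yields a decision procedure for $Q$ from one for $K$.
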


Corollary~\ref{cor:wordprob} will be used in Section~7 to show that the 
triviality problem for $n$-knots, $n\ge  3$, is unsolvable. 

We prove the unsolvability of the various recognition problems that we 
consider in this section by showing that their solvability would imply 
the solvability of $\Rec (\G,\{1\})$. 
The proofs all follow the same pattern, which can be described in the 
following way. 
Suppose $\C\subset \B \subset \A$ ($\subset \G$). 
An {\em $(\A,\B,\C)$-construction\/} is a computable function 
$$f :(P(\G), P(\{1\}), P(\G- \{1\}) 
\to (P(\A), P(\C), P(\A-\B))\ .$$
In words, an $(\A,\B,\C)$-construction is an effective procedure which 
takes an arbitrary finite presentation of a group $G$ and produces a finite 
presentation of a group $A\in \A$, such that if $G=1$ then $A\in \C$ and 
if $G\ne 1$ then $A\notin \B$. 
Hence if $\D$ is any class of groups such that $\C \subset \D\subset \B$, 
then $A\in  \D$ if and only if $G=1$. 
It follows from the unsolvability of $\Rec (\G,\{1\})$ that if an 
$(\A,\B,\C)$-construction exists (and $P(\A)$) is recursively enumerable) 
then $\Rec (\A,\D)$ is unsolvable. 
We remark that all our $(\A,\B,\C)$-constructions will actually produce an 
embedding of $G$ in $A$. 

\begin{thm}\label{thm7} 
$(\K_3,\K_2,\{\zed\})$-constructions exist.
\end{thm}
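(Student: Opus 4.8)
The plan is to produce, from an arbitrary finite presentation of a group $G$, a finite presentation of a group $A\in\K_3$ such that $A\in\K_2$ exactly when $G=1$. By Theorem~\ref{thm:Kervaire}, membership in $\K_3$ means $H_1(A)\cong\zed$, $H_2(A)=0$, and $A$ is the normal closure of a single element. I would start from the construction already in the proof of Theorem~\ref{thm:embed}: apply Proposition~\ref{prop1} (with Addendum~\ref{adden1}) to $G$ to get a perfect group $P$ with $G\hookrightarrow P$, $P=1$ iff $G=1$, and $H_2(P)$ infinite when $G\ne1$; then form the iterated HNN extension $K$ of $P\times P$ by stable letters $s,t,u$ with $s^{-1}(1,p)s=(p,1)$, $t^{-1}(1,p)t=(p,p)$, $u^{-1}su=s^2$, $u^{-1}tu=t^2$, which lies in $\K_3$ and contains $G$. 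When $G=1$ this collapses to the infinite cyclic group $\zed\in\K_0\subset\K_2$, so the ``$G=1$'' half of an $(\K_3,\K_2,\{\zed\})$-construction is immediate.

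The substantive half is to arrange that $K\notin\K_2$ whenever $G\ne1$. The natural obstruction to use is Theorem~\ref{thm5} (the Gonz\'alez-Acu\~na characterization), or more practically a consequence of it that is violated when $G$ is nontrivial. Since $\K_2\subsetneq\K_3$ is witnessed by homological or centralizer-type conditions, I would look for an invariant that is forced to vanish for $2$-knot groups but is guaranteed nonzero here — the most promising candidate is the Pontrjagin-product/centralizer data appearing in Simon's Theorem~\ref{thm2} together with the fact that $2$-knot groups are deficiency-zero-like and have strong finiteness constraints on $H_2$. Concretely: for a $2$-knot group $G'$ one has $H_2(G')$ finitely generated and, via the characterization, controlled by $t\wedge C_t$; if the HNN/product construction can be rigged so that $H_2$ of the resulting $\K_3$-group is \emph{infinite} (or violates the precise $t\wedge C_t$ equation) whenever $G\ne1$, then it cannot lie in $\K_2$. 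This is exactly why the Addendum was stated: it gives $H_2(P)$ infinite, and I would track that infiniteness through a modified construction — perhaps not killing $H_2$ entirely as in Theorem~\ref{thm:embed}, but instead building a group in $\K_3\setminus\K_2$ directly by taking a slightly different iterated extension whose $H_2$ remains infinite.

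The key steps, in order, are: (1) run Proposition~\ref{prop1}+Addendum~\ref{adden1} to get $P$ with the three properties plus $H_2(P)$ infinite for $G\ne1$; (2) exhibit an explicit, computable assembly of $P$ (or $P\times P$) into a group $A$ together with stable letters so that $A=\LL u\GG$, $H_1(A)\cong\zed$, and $H_2(A)=0$ — placing $A$ in $\K_3$ — while some further invariant of $A$ detectable from the construction is nontrivial iff $G\ne1$; (3) invoke a property that every group in $\K_2$ must satisfy (finitely generated $H_2$ controlled by a peripheral $t\wedge C_t$, from Theorems~\ref{thm2} and \ref{thm5}) and show $A$ fails it when $G\ne1$; (4) check $A=\zed$ when $G=1$; (5) observe the whole procedure is computable and $P(\K_3)$ is recursively enumerable by Theorem~\ref{thm6}, so the unsolvability conclusion follows from the $(\A,\B,\C)$-construction formalism.

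The main obstacle I anticipate is step (3): finding the \emph{right} algebraic invariant that is both (a) forced to behave trivially on all of $\K_2$ by the presentation-level characterization in Theorem~\ref{thm5}, and (b) provably nontrivial for the constructed group $A$ whenever $G\ne1$. The characterization of $\K_2$ is combinatorial and does not obviously give a clean homological obstruction, so I expect the real work is to extract from Theorem~\ref{thm5} (or from Simon's Theorem~\ref{thm2} plus the structure of $2$-knot exteriors) a usable constraint — most likely that $H_2(G')=t\wedge C_t$ for a $2$-knot group, with $C_t$ abelian of rank $\le 2$ forcing $H_2$ to be a quotient of $\zed$ or at least finitely generated of bounded rank — and then to verify by a Mayer--Vietoris computation on the amalgamated/HNN decomposition that the $A$ we build has $H_2$ (or the relevant peripheral product) too large to fit. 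Once that homological bookkeeping is in place, the rest is the routine $(\A,\B,\C)$-construction wrap-up already set up in the preceding discussion.
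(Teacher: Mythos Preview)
Your overall plan is sound and the computable assembly from $G$ through $P$ to a group in $\K_3$, collapsing to $\zed$ when $G=1$, is correct. The genuine gap is in step~(3): the invariant you propose cannot possibly work. By Kervaire's Theorem~\ref{thm:Kervaire}, \emph{every} group in $\K_3$ has $H_2=0$; so the Pontrjagin-product condition $t\wedge C_t=H_2$ of Theorem~\ref{thm2} is automatically satisfied, and no amount of making $H_2(P)$ large will leave a trace in $H_2(K)$ once you have forced $K$ into $\K_3$. Your sentence ``building a group in $\K_3\setminus\K_2$ \ldots\ whose $H_2$ remains infinite'' is self-contradictory, and Theorem~\ref{thm5} gives a presentation-level characterization of $\K_2$ that does not yield any obvious homological obstruction on $K$ itself.

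The obstruction the paper actually uses lives one level up: it is the rational homology of the \emph{commutator subgroup} $K'$ (equivalently, of the infinite cyclic cover of the knot complement). For $2$-knot groups, Milnor--Farber--Hillman duality \cite{Miln1,Fa,Hi} forces $\dim_{\que}H_1(K';\que)=\dim_{\que}H_2(K';\que)$; the paper arranges that $H_1(K';\que)=0$ while $H_2(K';\que)\ne0$, so $K\notin\K_2$. This is where the infiniteness of $H_2(P)$ from Addendum~\ref{adden1} is spent: it survives into $H_2(K')$, not into $H_2(K)$. To make this computation go through the paper does not use the group of Theorem~\ref{thm:embed} directly; it builds a more elaborate $K$ (two copies of an HNN extension $Q$ of $P\times P$, amalgamated over a free group of rank~2, followed by one further HNN extension with stable letter $t$) precisely so that the $\Lambda=\zed[t,t^{-1}]$-module structure on $H_*(K')$ can be read off from a Mayer--Vietoris / Wang sequence and $H_2(K')\cong H_2(P)^2\otimes\zed[\tfrac12]$ emerges as nonzero. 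You should redirect your Mayer--Vietoris bookkeeping to the commutator subgroup, and look up the duality statement in \cite{Miln1}, \cite{Fa}, or \cite{Hi} for the reason any $2$-knot group must satisfy the constraint you will then violate.
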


\begin{proof}
Given a finite presentation of a group $G$, we must produce a finite 
presentation of a group $K\in \K_3$, such that if $G=1$ then $K\cong\zed$ 
and if $G\ne 1$ then $K\notin \K_2$. 

Let $P$ be the finitely presented group described in the proofs of 
Proposition~\ref{prop1} and Addendum~\ref{adden1}. 
Let $Q$ be the HNN extension 
$\langle P\times P ,\ s:s^{-1} (1,p)s = (p,1),\ p\in P\rangle$.
Let $q = (a,\alpha) \in P\times P$, and let $R$ be obtained from the free 
product $Q_1 * Q_2$  of two copies of $Q$ by adjoining the relations 
$s_1 = q_2$, $q_1 = s_2$. 
Here, a letter with subindex 1 (resp.\ 2) represents an element of the 
first (resp.\ second) copy of $Q$.
Finally, let $K = \langle R,t : t^{-1} (1,p_i) t = (p_i,p_i)$, 
$p_i\in P_i$, $i=1,2\rangle$. 
Note that we can write down a finite presentation of $K$. 

If $G=1$, then $P=1$, and hence $R=1$ and $K\cong \zed$.

{From} now on, assume that $G\ne 1$. 
We shall show that $G\in \K_3-\K_2$. 
First note that $q^n\notin (1\times P)\cup (P\times 1)$ for $n\ne 0$,
and so, by Britton's Lemma, the subgroup $\langle s,q\rangle$ of $Q$ is 
a free group of rank~2. 
Hence $R$ is a free product with amalgamation $Q_1 *_{F_2} Q_2$. 
Since, in $Q$, $(1\times P)\cap \langle s,q\rangle = \{1\}$, the 
subgroup $S = \langle 1\times P_1,\ 1\times P_2\rangle$ of $R$ is the 
free product $(1\times P_1) * (1\times P_2)$. 
Also, the map $\delta :S\to R$ given by $\delta (1,p_i) = (p_i,p_i)$, 
$p \in P_i$, $i=1,2$, is a monomorphism, since, if $\Delta$ is the diagonal 
subgroup of $P\times P$, then $\Delta \cap \langle s,q\rangle =\{1\}$ in $Q$.
Hence $K$ is an HNN extension of $R$. 
Note that  $\LL t\GG =K$, and that $H_1 (K) \cong\zed$. 

Let $\iota :S\to R$ be the inclusion map. 
Using the Mayer-Vietoris sequences for free products with amalgamation and 
HNN extensions \cite{Bie}
one sees that $\iota_* :H_2(S) \to H_2(R)$ is an isomorphism.
Also, for $x\in H_2(S)$, $\delta_* (x) = 2\iota_* (x)$. 
Hence, again using the Mayer-Vietoris sequence for HNN extensions, we obtain 
$H_2(K)=0$. 
Hence $K\in \K_3$. 

To see that $K\notin \K_2$, we examine $H_j (K')$, $j=1,2$, where $K'$ is 
the commutator subgroup of $K$. 
Consider spaces $X_R$, $X_S$, where $X_H$ denotes an aspherical complex 
with basepoint $*$ and $\pi_1 (X_H,*)\cong H$. 
Let $f,g : (X_S,*) \to (X_R,*)$ be cellular maps inducing, respectively, 
the maps $\iota$ and $\delta$ on fundamental groups. 
In the disjoint union of $X_R$ and $X_S\times [0,1]$, identity 
$(x,0) \in X_S \times [0,1]$ with $f(x)$ and $(x,1)\in X_S\times [0,1]$ 
with $g(x)$, obtaining an aspherical complex $X_K$.
Then $H_* (K') \cong H_* (\wtilde X_K)$, where $\wtilde X_K$ is the 
universal abelian (infinite cyclic) covering of $X_K$. 
As in \cite[p.43]{L} one gets an exact sequence 
$$\cdots \too H_j (S) \otimes_\zed \Lambda \xrightarrow{\ d\ } 
H_j(R)\otimes_\zed \Lambda \too H_j(K') \too H_{j-1}(S)\otimes_{\zed} 
\Lambda \too \cdots\ ,
$$
where $\Lambda =\zed [t,t^{-1}]$ is the integral group ring of the 
infinite cyclic group generated by $t$ and $d$ is given by 
$d(x\otimes\lambda) = \delta_* (x) \otimes t\lambda - \iota_* (x)\otimes
\lambda$.

Note that $d: H_0(S)\otimes_{\zed} \Lambda \to H_0(R)\otimes_{\zed} \Lambda$ 
can be identified with $t-1:\Lambda \to\Lambda$, which is injective. 
Since $R$ is perfect, it follows that $H_1 (K') =0$. 

Recall that $\iota_* : H_2 (S) \to H_2 (R)$ is an isomorphism, and that, for 
$x\in H_2(S)$, $\delta_* (x) = 2\iota_* (x)$. 
Hence the exact sequence above shows that $H_2(K')$ is 
isomorphic to the cokernel of the map 
$d' :H_2 (S)\otimes_\zed \Lambda \to H_2(S)\otimes_\zed \Lambda$ defined by 
$d' (x\otimes \lambda) = x\otimes (2t-1)\lambda$. 
Thus $H_2(K') \cong H_2 (S)\otimes (\Lambda/(2t-1) \Lambda) \cong 
H_2 (S)\otimes \zed [1/2]$. 
Since $H_2(S) \cong H_2 (P)\oplus H_2 (P)$ is infinite (Addendum~\ref{adden1})
it follows that $H_2 (K';\que) \ne0$. 
This, together with the fact that $H_1 (K';\que) =0$, implies that 
$K\notin \K_2$ 
\cite{Fa}, \cite{Hi}, \cite{Miln1}.
\end{proof}

\begin{cor}\label{cor3.4}
If $\K_0 \subset \B\subset\K_2$ then $\Rec (\K_3,\B)$ is unsolvable.
\end{cor}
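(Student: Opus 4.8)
The plan is to deduce Corollary~\ref{cor3.4} directly from Theorem~\ref{thm7} via the general mechanism of $(\A,\B,\C)$-constructions that was set up just before the statement of Theorem~\ref{thm7}. The key observation is that Theorem~\ref{thm7} furnishes a $(\K_3,\K_2,\{\zed\})$-construction, i.e.\ a computable function which takes an arbitrary finite presentation of a group $G$ and returns a finite presentation of a group $K \in \K_3$ with the property that $K \cong \zed$ when $G = 1$ and $K \notin \K_2$ when $G \ne 1$. I would begin by recalling that $\K_0 = \{\zed\}$, so the hypothesis $\K_0 \subset \B \subset \K_2$ reads exactly $\{\zed\} \subset \B \subset \K_2$; this is precisely the situation covered by the remark following the definition of an $(\A,\B,\C)$-construction, with $\C = \{\zed\}$ and $\D = \B$.

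Next I would spell out the dichotomy for the group $K$ produced by the construction. If $G = 1$ then $K \cong \zed \in \K_0 \subset \B$, so $K \in \B$. Conversely, if $G \ne 1$ then $K \notin \K_2 \supset \B$, so $K \notin \B$. Hence $K \in \B$ if and only if $G = 1$. Thus any algorithm deciding $\Rec(\K_3,\B)$ — that is, deciding, given a finite presentation of a group in $\K_3$, whether that group lies in $\B$ — would, when composed with the computable function of Theorem~\ref{thm7}, decide whether an arbitrary finitely presented group $G$ is trivial. Here I would invoke Theorem~\ref{thm6}, which guarantees that $P(\K_3)$ is recursively enumerable, so that $\Rec(\K_3,\B)$ is a well-posed recognition problem in the sense defined in Section~2.

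Finally I would appeal to the unsolvability of $\Rec(\G,\{1\})$ — the classical fact that there is no algorithm to decide whether a finite group presentation presents the trivial group — to conclude that no such algorithm for $\Rec(\K_3,\B)$ can exist, i.e.\ $\Rec(\K_3,\B)$ is unsolvable.

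I do not anticipate any genuine obstacle here: all the real work was done in proving Theorem~\ref{thm7}, and Corollary~\ref{cor3.4} is a formal consequence obtained by plugging that theorem into the $(\A,\B,\C)$-construction template. The only point requiring a word of care is the verification that $\B$ sits between $\K_0$ and $\K_2$ and that $\K_0 = \{\zed\}$, so that the construction's target class $\{\zed\}$ really is contained in $\B$ and $\B$ really is contained in the class $\K_2$ that $K$ avoids when $G \ne 1$; once that is noted, the argument is immediate.
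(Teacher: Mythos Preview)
Your proposal is correct and is exactly the argument the paper intends: the corollary is stated without proof precisely because it follows immediately from Theorem~\ref{thm7} via the general $(\A,\B,\C)$-construction framework set up just before it (with $\A=\K_3$, $\B=\K_2$, $\C=\{\zed\}=\K_0$, and $\D$ equal to the class $\B$ of the corollary). Your write-up simply makes explicit the observation the paper leaves to the reader.
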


\begin{thm}\label{thm8}
$(\S,\K_3,\{\zed\})$-constructions exist. 
\end{thm}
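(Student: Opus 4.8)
The plan is to construct, from an arbitrary finite presentation of a group $G$, a presentation of a group $A \in \S$ such that $A \cong \zed$ when $G = 1$ and $A \notin \K_3$ when $G \ne 1$. Since Theorem~\ref{thm2} characterizes $\S$ as those $G \in \G$ with $H_1(G) \cong \zed$ admitting a weight element $t$ with $t \wedge C_t = H_2(G)$, while Theorem~\ref{thm:Kervaire} characterizes $\K_3$ by the additional requirement $H_2(G) = 0$, the task reduces to arranging a group in $\S$ whose second homology is nonzero exactly when $G \ne 1$, yet whose entire $H_2$ is realized as a Pontrjagin product $t \wedge c$ for some $c$ in the centralizer of a weight element $t$. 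I would reuse the perfect group $P$ from Proposition~\ref{prop1} and Addendum~\ref{adden1}, so that $P = 1$ iff $G = 1$ and $H_2(P)$ is infinite when $G \ne 1$, and embed it into a group built by HNN-type constructions analogous to those in the proofs of Theorems~\ref{thm:embed} and~\ref{thm7}.

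The key steps, in order: (1) Start from $P$ and form a group $A$ by an explicit sequence of HNN extensions and/or amalgamated products over free subgroups, chosen so that $A$ has a Wirtinger presentation (so $A \in \S$ by Theorem~\ref{thm3}) and $H_1(A) \cong \zed$; the natural candidate is something like the iterated HNN extension used in Theorem~\ref{thm:embed}, but with the stable letters and associated subgroups arranged so that a designated weight element $t$ commutes with an element $c$ whose Pontrjagin product $t \wedge c$ sweeps out all of $H_2(A)$. (2) Verify via the Mayer–Vietoris sequences for HNN extensions and amalgamated products (as in \cite{Bie}) that $H_2(A)$ is infinite precisely when $H_2(P) \ne 0$, i.e. when $G \ne 1$, and is zero when $G = 1$ (in which case $P = 1$ forces $A \cong \zed$). (3) Exhibit the weight element $t$ and the centralizing element $c$, and compute $(\varphi_{t,c})_*: H_2(\zed \times \zed) \to H_2(A)$ directly on the level of the aspherical model built by gluing mapping cylinders (as in the $H_*(K')$ computation in Theorem~\ref{thm7}), checking that its image is all of $H_2(A)$; this establishes $A \in \S$ via Theorem~\ref{thm2}, or alternatively one simply invokes Theorem~\ref{thm3} and a directly-written Wirtinger presentation. (4) Conclude: $A \in \S$ always, $A \cong \zed \in \K_3$ when $G = 1$, and $A \notin \K_3$ when $G \ne 1$ because $H_2(A) \ne 0$ obstructs membership by Kervaire's characterization. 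The whole $f$ is computable since every step is an effective manipulation of presentations.

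The main obstacle I expect is step (3) — simultaneously keeping $A$ in $\S$ (which demands the delicate condition that $H_2(A)$ be \emph{covered} by a single Pontrjagin product $t \wedge c$ with $[t,c] = 1$ and $\LL t \GG = A$) while also making $H_2(A) \ne 0$ whenever $G \ne 1$. The tension is that pushing $H_2$ up tends to destroy the Pontrjagin-product condition, and the construction must be engineered so that the extra homology lives entirely inside the image of $H_2(\zed \times \zed)$ for a commuting pair visible in the presentation. I anticipate the right device is to build $A$ so that it contains a subgroup isomorphic to $\zed \times P$ (or a mapping torus thereof) positioned so that the $\zed$ factor centralizes a copy of $P$ carrying all of $H_2$, with the other HNN moves arranged to fix $H_1(A) \cong \zed$ and to contribute nothing new to $H_2$; the Wirtinger-presentation route (Theorem~\ref{thm3}) then gives membership in $\S$ more cheaply than verifying the Pontrjagin condition by hand. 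The homology bookkeeping via iterated Mayer–Vietoris, though routine in principle, is where the care lies, exactly as in Theorem~\ref{thm7}.
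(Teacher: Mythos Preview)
Your outline correctly identifies the target --- build $A\in\S$ with $H_2(A)\ne 0$ exactly when $G\ne 1$ --- and you rightly flag step (3) as the crux. But the proposal never supplies a construction that clears that hurdle, and the devices you reach for (iterated HNN extensions in the style of Theorems~\ref{thm:embed} and~\ref{thm7}, carrying $H_2(P)$ forward via Addendum~\ref{adden1}) are not obviously adequate. Those earlier constructions were engineered precisely to \emph{kill} $H_2$; reversing them to keep $H_2$ nonzero while simultaneously landing in $\S$ is the whole difficulty, and ``arrange the stable letters so that $t\wedge C_t$ sweeps out $H_2(A)$'' is a restatement of the goal, not a mechanism. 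Note too that $t\wedge c$ is a single element of $H_2(A)$, not a subgroup, so if you aim for $H_2(A)$ infinite you need the \emph{set} $\{t\wedge c:c\in C_t\}$ to be all of $H_2(A)$ --- a much more delicate demand than one Pontrjagin product hitting a generator. Your fallback of $\zed\times P$ fails immediately: with $P$ perfect and nontrivial, no element of $\zed\times P$ normally generates, so it is not even in $\M$.

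The paper's route is substantially different from what you sketch. Rather than trying to propagate the (infinite) $H_2(P)$ from Addendum~\ref{adden1}, it manufactures a \emph{finite} $H_2$ of order~2 by a central-extension trick: amalgamate $P$ with $A_5*\zed_2$ to get a perfect group $Q$, pass to the universal central extension $\wtilde Q$ (which has $H_2=0$), and then kill a specific central element $c^2$ of order~2 to obtain $R$ with $H_2(R)\cong\zed_2$. The output is $K=\zed\times R$. Membership in $\S$ is obtained not by verifying the Pontrjagin condition directly but by the Wirtinger route you mention --- crucially, $\zed\times\wtilde Q$ already lies in $\K_3\subset\S$ and hence has a Wirtinger presentation, and the extra relation $c^2=1$ is itself a conjugation relation because $c^2$ is a commutator $[c,(dcd^{-1}c)^2d]$ in $\wtilde A_5$. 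That commutator identity, together with the universal-central-extension step, is the missing idea your proposal would need to replace.
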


\begin{proof}
Let $G = \langle x_1,\ldots,x_m :r_1,\ldots,r_n\rangle$ be a finitely 
presented group. 
Embed $G$ in a perfect group $P$ as in the proof of Proposition~\ref{prop1}. 
Consider the groups $A_5 = \langle c,d : c^2 = d^3 = (cd)^5 =1\rangle$ and 
$\zed_2 = \langle e: e^2 =1\rangle$. 
Let $Q$ be the group obtained from $P * A_5 * \zed_2$ by adding the relation
$b=de$. 
Let $\wtilde Q$ be the universal central extension of the perfect group
$Q$ (see \cite{Miln2}). 
Then $\wtilde Q$ has a presentation with generators $x_1,\ldots,x_m$, 
$a,\alpha,b,\beta,c,d,e$, and relations (i) through (v) of the proof of 
Proposition~\ref{prop1}, together with $c^2 = d^3 = (cd)^5$, $b=de$, and 
$[r,g] =1$ where $r$ runs over the words $r_1,\ldots,r_n$, $c^2$, $e^2$ and 
$g$ runs over the generators of $\wtilde Q$. 
(Compare the proof of Lemma~2 in Section~10 of \cite{KVF}.)
The kernel of the natural epimorphism from $\wtilde Q$ to $Q$ is 
(contained in) the center of $\wtilde Q$; also $H_2(\wtilde Q)=0$. 
Now adjoin to $\wtilde Q$ the relation $c^2=1$ to get the group $R$.
Let $K = \zed\times R$. 
It is not difficult to verify that if $G=1$ then $K\cong \zed$. 

Assume in the rest of the proof that $G\ne1$; we claim that $K\in \S-\K_3$.
First note that $c^2$ is a central element of order~2 in $\wtilde A_5 = 
\langle c,d :c^2 = d^3 = (cd)^5\rangle$ and that $c^2 = [c,(dcd^{-1}c)^2d]$
in $\wtilde A_5$ and therefore in $\wtilde Q$. 
To see that  $c^2$ is non-trivial in $\wtilde Q$, adjoin to $\wtilde Q$ 
the relations $r_1^2 =1,\ldots, r_n^2=1,\ e^2=1$ to obtain the iterated 
free product with amalgamation $S = (P\times \zed_2) *_{\zed\times\zed_2} 
(\wtilde A_5 *_{\zed_2} (\zed_2\times\zed_2))$. 
Here $\wtilde A_5$ and $\zed_2\times\zed_2 = \langle y,e : y^2=e^2 = [y,e]
=1\rangle$ are amalgamated by $c^2=y$; $P\times\zed_2$ and 
$\wtilde A_5 *_{\zed_2} (\zed_2\times\zed_2)$ are amalgamated by 
$b=de$, $z=c^2$ where $z$ generates the second factor of $P\times\zed_2$.
Since $c^2$ is non-trivial in $\wtilde A_5$, it is non-trivial in $S$ 
and therefore in $\wtilde Q$.

Hence we have a short exact sequence $1\to \zed_2 \to \wtilde Q\to R\to 1$ 
with $\zed_2$ contained in the center of $\wtilde Q$.
The associated 5-term exact sequence then yields $H_2 (R) \cong\zed_2$; 
therefore $H_2 (K)\cong\zed_2$ and so $K\notin \K_3$.

To see that $K\in \S$ first note that, if $\tau$ is a generator of $\zed$, 
then $(\zed\times\wtilde Q)/\LL \tau c\GG =1$. 
Since, in addition, $H_2 (\zed\times \wtilde Q)=0$, $\zed\times\wtilde Q$ 
has a Wirtinger presentation with a generator representing $\tau c$ 
(see \cite{Y} or \cite{Si}).
Finally, $K$ is obtained from $\zed\times \wtilde Q$ by adding the 
relation $[\tau c,(dcd^{-1}c)^2 d]$, so $K$ also has a Wirtinger presentation.
\end{proof}

\begin{cor}\label{cor4} 
If $\K_0 \subset \B \subset \K_3$ then $\Rec (\S,\B)$ is unsolvable.
\end{cor}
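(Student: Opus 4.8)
The plan is to deduce this immediately from Theorem~\ref{thm8} via the general principle about $(\A,\B,\C)$-constructions laid out just before Theorem~\ref{thm7}. Recall that if $\C\subset\D\subset\B\subset\A\subset\G$, $P(\A)$ is recursively enumerable, and an $(\A,\B,\C)$-construction exists, then $\Rec(\A,\D)$ is unsolvable (because composing the construction with a hypothetical algorithm deciding membership in $\D$ among presentations of groups in $\A$ would solve $\Rec(\G,\{1\})$). So the whole argument is a matter of checking that the hypotheses of this principle are met with $\A=\S$, $\B=\K_3$, $\C=\{\zed\}=\K_0$, and $\D$ the given class.

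Concretely, I would argue as follows. Suppose $\K_0\subset\B\subset\K_3$, and set $\D=\B$. By Theorem~\ref{thm8} there is an $(\S,\K_3,\{\zed\})$-construction $f$: a computable function which, given a finite presentation of a group $G$, produces a finite presentation of a group $A\in\S$ with $A\cong\zed$ if $G=1$ and $A\notin\K_3$ if $G\ne1$. Since $\zed\in\K_0\subset\B$ and $\B\subset\K_3$, we have: if $G=1$ then $A\in\B$, while if $G\ne1$ then $A\notin\K_3\supset\B$, so $A\notin\B$. Hence $A\in\B$ if and only if $G=1$. By Theorem~\ref{thm6}, $P(\S)$ is recursively enumerable, so $\Rec(\S,\B)$ is a well-posed recognition problem; a solution to it, precomposed with $f$, would decide $\Rec(\G,\{1\})$, which is unsolvable. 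Therefore $\Rec(\S,\B)$ is unsolvable.

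There is no real obstacle here — the only thing requiring a moment's care is the bookkeeping that $\K_0=\{\zed\}$ coincides with the class $\C$ appearing as the target of triviality in the $(\S,\K_3,\{\zed\})$-construction, so that the chain $\C\subset\D\subset\B$ of the general principle reads exactly $\{\zed\}\subset\B\subset\K_3$, which is the hypothesis. This is formally identical to the proof of Corollary~\ref{cor3.4} from Theorem~\ref{thm7}, with $\K_3$ in place of $\S$ and $\K_2$ in place of $\K_3$.
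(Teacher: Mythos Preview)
Your proof is correct and is exactly the argument the paper intends: Corollary~\ref{cor4} is stated without proof immediately after Theorem~\ref{thm8} precisely because it follows from that theorem via the general principle about $(\A,\B,\C)$-constructions spelled out before Theorem~\ref{thm7}, just as Corollary~\ref{cor3.4} follows from Theorem~\ref{thm7}.
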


\begin{thm}\label{thm9}
$(\M,\S,\{\zed\})$-constructions exist. 
\end{thm}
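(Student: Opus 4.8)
The plan is to construct, from an arbitrary finite presentation of a group $G$, a finite presentation of a group $M \in \M$ such that $M \cong \zed$ when $G = 1$ and $M \notin \S$ when $G \ne 1$. Recall from Theorem~\ref{thm2} (Simon's characterization) that a group $A$ with $H_1(A) \cong \zed$ and normal generator $t$ fails to lie in $\S$ precisely when $t \wedge C_t \ne H_2(A)$; the simplest way to arrange this is to make $H_2(A)$ nontrivial while forcing the centralizer $C_t$ of the chosen normal generator to be cyclic, so that $t \wedge C_t = 0$ (as noted just before Theorem~\ref{thm2}). So the design goal is: produce $A \in \M$ with $H_2(A)$ infinite (or at least nonzero) when $G \ne 1$, and with a normal generator whose centralizer is infinite cyclic.

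The concrete construction I would use mirrors the earlier proofs: first embed $G$ in the perfect group $P$ of Proposition~\ref{prop1} and Addendum~\ref{adden1}, so $H_2(P)$ is infinite when $G \ne 1$ and $P = 1$ when $G = 1$. Then I would build an iterated HNN extension on top of $P \times P$ (or $P$ together with an auxiliary free factor) using a stable letter $t$ that normally generates the whole group — this guarantees membership in $\M$ via the characterization $\M = \{A \in \G : \exists\, t,\ \LL t \GG = A\}$ displayed after Theorem~\ref{thm:Kervaire}. The extension should be chosen so that, via the Mayer--Vietoris sequence for HNN extensions, $H_2$ of the resulting group $M$ receives a copy of $H_2(P)$ and hence is infinite. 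A natural first attempt: take $M = \langle P, t : t^{-1} p\, t = \phi(p)\rangle$ for a suitable injective endomorphism $\phi$ of $P$ chosen so that $t$ normally generates $M$ and the associated squaring-type map on $H_2$ has infinite cokernel; alternatively reuse the HNN-tower structure from the proof of Theorem~\ref{thm7}. One then computes $H_1(M) \cong \zed$ and $H_2(M)$ infinite when $G \ne 1$, and checks $M \cong \zed$ when $P = 1$.

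The main obstacle will be verifying that $M \notin \S$, i.e.\ that Simon's condition $t \wedge C_t = H_2(M)$ genuinely fails — it is not enough that $H_2(M) \ne 0$, since in principle some other normal generator could satisfy the Pontrjagin-product condition. So I expect the hard part to be showing that for \emph{every} normal generator $t$ of $M$, the subgroup $t \wedge C_t$ of $H_2(M)$ is a proper subgroup (e.g.\ by arguing via Britton's Lemma that any normal generator has cyclic, hence central-free, centralizer, forcing $t \wedge C_t = 0$ while $H_2(M) \ne 0$). Equivalently one may invoke the Wirtinger-presentation form (Theorem~\ref{thm3}) and show directly that $M$ admits no Wirtinger presentation, arguing from the infinitude of $H_2(M)$ together with control of $H_*$ of the infinite cyclic cover $M'$ as in the computation at the end of the proof of Theorem~\ref{thm7}: if one arranges $H_2(M';\que) \ne 0$ while $H_1(M';\que) = 0$, the obstruction to being a Wirtinger (equivalently, fibered-up-to-$\que$) group kicks in. Finally, the recursive-enumerability hypothesis needed to convert the $(\M,\S,\{\zed\})$-construction into unsolvability of $\Rec(\M,\D)$ for $\K_3 \subset \D \subset \S$ is already supplied by Theorem~\ref{thm6}, so no extra work is required there.
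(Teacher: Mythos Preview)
Your overall strategy---embed $G$ in the perfect group $P$ of Proposition~\ref{prop1}, form an HNN extension $K$ with a stable letter that normally generates, force $H_2(K)\ne 0$, and then argue that every normal generator of $K$ has cyclic centralizer so that $t\wedge C_t=0\ne H_2(K)$---is exactly the paper's approach. But the paper's implementation is considerably simpler than what you sketch. There is no need for Addendum~\ref{adden1}, for $P\times P$, or for an iterated HNN tower: the paper simply sets $K=\langle P,\,s : s^{-1}bs=b^2\rangle$, a single HNN extension of $P$ along the infinite cyclic subgroup $\langle b\rangle$ (here $b$ is one of the four auxiliary generators from the proof of Proposition~\ref{prop1}). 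Then $\LL s\GG=K$ trivially, and the Mayer--Vietoris tail $H_2(K)\to H_1(\zed)\to H_1(P)=0$ already forces $H_2(K)$ to surject onto $\zed$, so $H_2(K)\ne0$ without any appeal to $H_2(P)$. For the centralizer step, the paper uses Bass--Serre theory (via \cite[Corollary~3.8]{SW}) rather than a bare Britton's Lemma computation: any $2$-generator abelian subgroup $\langle t,c\rangle$ of $K$ either is infinite cyclic or lies in a conjugate of the vertex group $P$; the latter is impossible for a normal generator $t$ since $K/\LL P\GG\cong\zed\ne1$.

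One genuine misstep: your fallback suggestion of ruling out $\S$ by computing $H_*(K';\que)$ of the infinite cyclic cover, as at the end of the proof of Theorem~\ref{thm7}, does not apply here. That computation detects failure of the Farber--Levine duality condition and hence excludes membership in $\K_2$; it says nothing about $\S$, which is characterized by the Pontrjagin-product condition of Theorem~\ref{thm2} (equivalently, existence of a Wirtinger presentation), not by any constraint on the rational homology of the commutator subgroup. So that alternative should be dropped; the centralizer argument is the one that works.
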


\begin{proof}
First embed $G$ in a perfect group $P$ as in (the proof of) 
Proposition~\ref{prop1}. 
Let $K = \langle P,s: s^{-1} bs=b^2\rangle$. 
Then $\LL s\GG =K$, and so $K\in \M$.

If $G=1$ then $P=1$ and $K\cong \zed$.

Now assume $G\ne 1$. 
Then $K$ is an HNN extension of $P$, and the Mayer-Vietoris sequence 
of this extension gives an exact sequence
$$H_2(K) \too H_1(\zed) \too H_1(P)=0\ .$$
Hence $H_2(K) \ne0$. 
This already shows that $K\notin \K_3$.
To show that $K\notin \S$ we use Theorem~\ref{thm2}.

Let $t\in K$ be an element such that $\LL t\GG =K$, and let $c\in C_t$, 
the centralizer of $t$ in $K$. 
Then $\langle t,c\rangle$ either is isomorphic to $\zed$ or does not split 
non-trivially as a free product with amalgamation or HNN extension.
If the latter holds then (see \cite[Corollary~3.8]{SW}) 
$\langle t,c\rangle$, and therefore $t$, lies in a conjugate of $P$, 
contradicting our assumption that $\LL t\GG =K$. 
Hence $\langle t,c\rangle \cong \zed$, and so $t\wedge c=0$. 
Since $H_2(K) \ne0$, Theorem~\ref{thm2} implies that $K\notin \S$.
\end{proof}

\begin{cor}\label{cor5}
If $\K_0 \subset \B \subset \S$ then $\Rec (\M,\B)$ is unsolvable.
\end{cor}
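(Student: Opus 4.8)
The plan is to deduce Corollary~\ref{cor5} formally from Theorem~\ref{thm9} via the general mechanism set up just before Theorem~\ref{thm7}: an $(\A,\B,\C)$-construction reduces $\Rec(\A,\D)$ to $\Rec(\G,\{1\})$ whenever $\C\subset\D\subset\B$. Here $\A=\M$, $\B=\S$, $\C=\{\zed\}$, and the class to be recognized is the given $\B$ with $\K_0\subset\B\subset\S$; since $\K_0=\{\zed\}$, this is exactly the range $\{\zed\}\subset\B\subset\S$ that the mechanism requires.

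First I would invoke Theorem~\ref{thm9} to obtain a computable function $f$ which takes an arbitrary finite presentation of a group $G$ and returns a finite presentation of a group $A\in\M$ such that $A\cong\zed$ if $G=1$ and $A\notin\S$ if $G\ne1$. Next, given any $\B$ with $\K_0\subset\B\subset\S$, I observe that $\zed\in\K_0\subset\B$, so $G=1$ forces $A\in\B$, while $\B\subset\S$ together with $A\notin\S$ forces $A\notin\B$ when $G\ne1$. Hence $A\in\B$ if and only if $G=1$. If $\Rec(\M,\B)$ were solvable then, since $P(\M)$ is recursively enumerable by Theorem~\ref{thm6}, we could feed the computed presentation of $A$ to the hypothetical algorithm and thereby decide whether $G=1$, contradicting the unsolvability of $\Rec(\G,\{1\})$.

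In this deduction essentially all of the work has already been carried out in Theorem~\ref{thm9}; the remaining steps are purely formal. The only things that must be checked are the two inclusions $\{\zed\}=\K_0\subset\B$ and $\B\subset\S$, which are precisely the hypotheses, and the recursive enumerability of $P(\M)$, which is Theorem~\ref{thm6}. I therefore do not expect any real obstacle in the proof of the corollary itself: whatever genuine difficulty there is resides in the construction underlying Theorem~\ref{thm9} (the verification that $K=\langle P,s:s^{-1}bs=b^2\rangle$ lies in $\M-\S$, using Theorem~\ref{thm2} and the splitting results for $\langle t,c\rangle$), not in passing from that construction to the statement of Corollary~\ref{cor5}.
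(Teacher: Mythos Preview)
Your proposal is correct and is exactly the paper's approach: the corollary is stated without a separate proof because it follows immediately from Theorem~\ref{thm9} via the general $(\A,\B,\C)$-mechanism set up before Theorem~\ref{thm7}, together with $\K_0=\{\zed\}$ and the recursive enumerability of $P(\M)$ from Theorem~\ref{thm6}. There is nothing to add.
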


\section{Having weight 1 is unrecognizable}

Let $U = \langle u_1,u_2 : \ldots\rangle $ be a 2-generator, finitely 
presented, torsion-free group with unsolvable word problem. 
Such a group exists by \cite{Bo} or \cite{Mill}.

Let $K$ be the iterated HNN extension
$$\langle U,y,y_2,z : y_i^{-1} u_i y_i = u_i^2\ \ (i =1,2),\ \ 
z^{-1} y_i z= y_i^2\ \ (i=1,2)\rangle\ .$$
$K$ is still torsion-free and is normally generated by $z$. 
Also, for any non-trivial element $w$ of $U$, the subgroup 
$\langle z,w\rangle$ of $K$ is isomorphic to $F_2$, the free group of rank~2.

Consider the group $Q = \langle r,s,t: s^{-1} rs =r^2,\ t^{-1}st=s^2\rangle$.
It is torsion-free, normally generated by $t$, and the subgroup
$\langle r,t\rangle \cong F_2$.

For any word $w$ in $u_1,u_2$, let $D_w$ be obtained from the free product 
$K*Q$ by adding the relations $w=t$, $z=r$. 
If $w$ represents the trivial element of $U$, then $D_w=1$, while if $w$ 
does not represent the trivial element of $U$ then $D_w$ is a free product 
with amalgamation $K *_{F_2} Q$, and hence is torsion-free and non-trivial.
Let $G_w = \zed * D_w$. 
Then, by Klyachko's theorem \cite{Kl}, $G_w$ has 
weight~1 if and only if $w$ represents the trivial element of $U$. 
Thus we have proved 

\begin{prop}\label{prop4.1} 
{\rm (1)} If $w$ represents the trivial element of $U$ then $G_w\cong\zed$;

{\rm (2)} if $w$ does not represent the trivial element of $U$ then 
$G_w\notin \M$.
\end{prop}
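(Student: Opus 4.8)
The plan is to extract the two statements directly from the constructions already laid out in the text, verifying the group-theoretic claims that were asserted in passing. The key structural facts are: (a) $K$ is torsion-free, normally generated by $z$, and $\langle z,w\rangle\cong F_2$ for every nontrivial $w\in U$; (b) $Q$ is torsion-free, normally generated by $t$, with $\langle r,t\rangle\cong F_2$. For (a), I would peel off the HNN extensions one at a time: starting from the torsion-free group $U$, each stable-letter extension $\langle H,y_i: y_i^{-1}u_iy_i=u_i^2\rangle$ is an HNN extension with associated subgroups the infinite cyclic groups $\langle u_i\rangle$ and $\langle u_i^2\rangle$ (infinite cyclic since $U$ is torsion-free and $u_i\ne1$, using unsolvable word problem only to know such $u_i$ exists — actually we just need $u_i$ of infinite order, which holds), hence torsion-free by Britton's Lemma; similarly for the $z$-extension. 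That $K=\LL z\GG$ is immediate since killing $z$ forces $y_i^2=y_i$ hence $y_i=1$, then $u_i^2=u_i$ hence $u_i=1$. For $\langle z,w\rangle\cong F_2$ with $w\ne1$ in $U$: since $w$ has infinite order in $U\subset K$ and $\langle z\rangle$ is the free part added by the outermost HNN extension, a Britton's Lemma / normal-form argument shows no nontrivial freely reduced word in $z$ and $w$ collapses. The facts about $Q$ are the special case $U=\langle r\rangle\cong\zed$.

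Next I would handle $D_w$. Recall $D_w$ is obtained from $K*Q$ by imposing $w=t$ and $z=r$, which is precisely the amalgamated free product $K*_{\phi}Q$ along the isomorphism $\phi$ of the rank-2 free subgroups identifying $z\leftrightarrow r$ and $w\leftrightarrow t$ — provided $w\ne1$ in $U$, so that $\langle z,w\rangle$ really is $F_2$ and the identification is along an isomorphism. In that case $D_w$ is a nontrivial free product with amalgamation of two torsion-free groups, hence torsion-free and nontrivial (indeed infinite, as an amalgam over a proper subgroup of each factor). If instead $w$ represents $1$ in $U$, then the relation $w=t$ kills $t$, and since $Q=\LL t\GG$ this kills $Q$, in particular $r=1$; then $z=r$ kills $z$, and since $K=\LL z\GG$ this kills $K$. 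So $D_w=1$ when $w=1$ in $U$, and $D_w$ is torsion-free, nontrivial when $w\ne1$ in $U$.

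Finally I would assemble the two conclusions for $G_w=\zed*D_w$. If $w$ represents the trivial element of $U$, then $D_w=1$ so $G_w\cong\zed$, giving (1). If $w$ does not represent the trivial element, then $D_w$ is torsion-free and nontrivial, so the free factor $\zed$ together with any nontrivial element of $D_w$ shows $G_w$ is not cyclic and, being a nontrivial free product, is not normally generated by one element: by Klyachko's theorem \cite{Kl} a group of the form $\zed*D$ with $D\ne1$ torsion-free has weight $\ge2$ (Klyachko's result on nonvanishing of one-relator-type quotients rules out $\LL g\GG=\zed*D$). Since $\M$ is exactly the class of finitely presented groups of weight~1 (the characterization recorded after Theorem~\ref{thm:Kervaire}), $G_w\notin\M$, giving (2). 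The main obstacle is the careful application of Klyachko's theorem to conclude $\zed*D_w$ has weight exactly $2$ rather than $1$; everything else is a routine sequence of Britton's Lemma and Mayer–Vietoris-free normal-form verifications of the kind the paper has already been suppressing.
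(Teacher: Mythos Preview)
Your proposal is correct and follows essentially the same approach as the paper: the paper's ``proof'' is the discussion in the paragraphs immediately preceding the proposition, and you have simply fleshed out the assertions made there (torsion-freeness via Britton's Lemma, $D_w$ as trivial or an amalgam $K*_{F_2}Q$, and the appeal to Klyachko for $\zed*D_w$). The one point worth making explicit in your write-up is that $D_w$ is perfect when $w\ne1$, so any putative normal generator of $\zed*D_w$ has exponent sum $\pm1$ in the $\zed$-generator, which is exactly the hypothesis Klyachko's theorem requires.
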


\noindent Since $U$ has unsolvable word problem we get 

\begin{cor}\label{cor4.2}
If $\K_0 \subset \B\subset \M$ then $\Rec (\G,\B)$ is unsolvable.
\end{cor}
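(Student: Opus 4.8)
The plan is to run the same reduction scheme used throughout Section~3, but with the word problem of $U$ in place of the triviality problem for finitely presented groups. The whole point is that Proposition~\ref{prop4.1} already supplies a ``dichotomy machine'': a uniform construction $w\mapsto G_w$ whose output lands in the tiny class $\K_0$ when $w=_U1$ and outside the big class $\M$ when $w\neq_U1$. Everything in between is bookkeeping.

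First I would observe that the assignment $w\mapsto G_w$ is a computable function. Given a word $w$ in $u_1,u_2$ one writes down explicitly the finite presentation of $K$, then of $Q$, forms the free product $K*Q$, adjoins the two relations $w=t$ and $z=r$ to obtain $D_w$, and finally sets $G_w=\zed*D_w$; each step is a purely syntactic manipulation of presentations. In particular every $G_w$ is finitely presented, so $G_w\in\G$, and (since $P(\G)$ is recursively enumerable, being just the set of all finite presentations) the question ``is $G_w\in\B$?'' is a legitimate instance of $\Rec(\G,\B)$.

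Next I would combine Proposition~\ref{prop4.1} with the hypothesis $\K_0\subset\B\subset\M$. If $w$ represents the trivial element of $U$, then $G_w\cong\zed$ by part~(1), and since $\zed\in\K_0\subset\B$ we get $G_w\in\B$. If $w$ does not represent the trivial element of $U$, then $G_w\notin\M$ by part~(2), and since $\B\subset\M$ we get $G_w\notin\B$. Hence $G_w\in\B$ if and only if $w$ represents the trivial element of $U$. To finish, suppose $\Rec(\G,\B)$ were solvable; then there is an algorithm that, given a finite presentation of a group in $\G$, decides whether that group lies in $\B$. Composing it with the construction $w\mapsto G_w$ and invoking the equivalence just established yields an algorithm deciding whether an arbitrary word $w$ represents the trivial element of $U$, i.e.\ a solution to the word problem for $U$. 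Since $U$ was chosen to have unsolvable word problem, this is a contradiction, so $\Rec(\G,\B)$ is unsolvable.

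I do not expect a genuine obstacle here, since all the group-theoretic substance (the HNN towers building $K$ and $Q$, the amalgamated-product structure of $D_w$ over $F_2$, and the use of Klyachko's theorem to detect the weight of $G_w=\zed*D_w$) is already packaged in Proposition~\ref{prop4.1}. The only points meriting care are that the construction of $G_w$ is genuinely uniform in $w$ --- so that ``build $G_w$, then test membership in $\B$'' is a single algorithm --- and the remark that the sandwich hypothesis $\K_0\subset\B\subset\M$ is precisely what makes the two cases of Proposition~\ref{prop4.1} separate along the membership relation for $\B$.
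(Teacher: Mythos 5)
Your proof is correct and is essentially the paper's own argument: the paper derives Corollary~\ref{cor4.2} immediately from Proposition~\ref{prop4.1} by exactly this reduction, using that $w\mapsto G_w$ is a uniform construction and that $\zed\in\K_0\subset\B\subset\M$ separates the two cases, so a decision procedure for $\Rec(\G,\B)$ would solve the (unsolvable) word problem of $U$. Your added remarks on computability of the construction and well-posedness via $P(\G)$ being recursively enumerable are exactly the bookkeeping the paper leaves implicit.
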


\section{Unsolvable problems about homology, Whitehead groups\\
and surgery groups}

By the Poincar\'e Conjecture 
\cite{Pe1}, \cite{Pe2}, \cite{Pe3}  
and the recognizability of the
3-sphere \cite{Ru}, it follows that there is an algorithm which decides 
whether or not a given closed 3-manifold is 1-connected.
It is interesting to note that one can phrase this in terms of homology 
of groups. 
Let $\A rt$ be the set of ordered presentations $\langle x_1,\ldots,x_n :
r_1,\ldots,r_n\rangle$ such that $\prod_{i=1}^n r_i x_i r_i^{-1} = 
\prod_{i=1}^n x_i$ in the free group with generators $x_1,\ldots, x_n$. 
The groups defined by the members of $\A rt$ are precisely the fundamental 
groups of closed orientable 3-manifolds. 
(This follows from the fact that every closed orientable 3-manifold is an
open book with planar pages (see e.g. 
\cite[pages~340--341]{Ro}) and a theorem of 
Artin (see \cite[Theorem~1.9]{Bir}; see also \cite{Wi} and \cite{Gon3}). 
Thus the question of deciding whether a closed 3-manifold is 1-connected is 
equivalent to the question  of deciding whether a member of $\A rt$ presents 
the trivial group.
This in turn can be phrased in terms of homology, as follows. 
Let $M$ be a closed orientable 3-manifold and let 
$M_1 \#\cdots \# M_r \# N_1 \#\cdots \# N_s\# S^1 \times S^2 \# \cdots \# 
S^1 \times S^2$ 
be the connected sum decomposition of $M$ into prime manifolds \cite{Miln3}, 
where $\pi_1 (M_i)$ is infinite non-cyclic, $1\le i\le r$, and $\pi_1(N_j)$ 
is finite, $1\le j\le s$. 
Let $n_j$ be the order of $\pi_1 (N_j)$, $1\le j\le s$. 
Then $H_3 (\pi_1 (M)) \cong \zed^r \oplus \zed_{n_1} \oplus \cdots\oplus 
\zed_{n_s}$, and so $\pi_1 (M) = 1$ if and only if $H_1 (\pi_1 (M)) =0$ 
and $H_3 (\pi_1(M))=0$. 
Thus the simple-connectedness problem is equivalent to deciding, for members 
$\A$ of $\A rt$, whether the finitely generated abelian groups $H_1(\A)$ 
and $H_3 (\A)$ are trivial.
(Here, and in the sequel, if $F$ is a functor defined on the category of 
groups, and $\P$ is a group presentation, we abbreviate 
$F(|\P|)$ to $F(\P)$.) 
As noted above, it is known (albeit indirectly) that this decision problem 
is solvable.

However, it is natural to ask the question for the class of all finite 
presentations.
We shall see that this and many other problems concerning the computation 
of the homology of groups in dimensions greater than 1 are algorithmically 
unsolvable.
We will also prove incomputability results about Whitehead groups 
$Wh_n(G)$ and Wall's surgery groups $L_n(G)$.
$H_*(G)$ will denote the infinite sequence 
$(H_1(G),H_2(G),H_3(G),\ldots)$ of integral homology groups of the group $G$.

\begin{thm}\label{thm11}
Let $\C$ be a class of infinite sequences $(A_1,A_2,A_3,\ldots)$ of abelian 
groups  which is closed under isomorphisms.\footnote{$(A_1,A_2,A_3,\ldots)$ 
is isomorphic to $(A'_1,A'_2,A'_3,\ldots)$ if $A_i \cong A'_i$ for all $i$.}
Suppose there are finitely presented groups $G_1,G_2$ such that 
$H_1 (G_1)\cong H_1(G_2)$, $H_* (G_1)\in \C$ and $H_*(G_2)\notin \C$. 
Then the set of finite presentations $\P$ such that $H_*(\P)\in \C$ is not 
recursive. 
\end{thm}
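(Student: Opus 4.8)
The plan is to reduce the decision problem for the triviality problem $\Rec(\G,\{1\})$—known to be unsolvable—to the membership problem ``$H_*(\P)\in\C$''. The engine is the computable function of Proposition~\ref{prop1}: given an arbitrary finite presentation of a group $G$, it produces a finite presentation of a perfect group $P$ with $G$ embedded in $P$ and with $P=1$ whenever $G=1$. I would use this to interpolate between the two witness groups $G_1$ and $G_2$. First I would form, by a single further computable step, the presentation of $G_1 * P$ (and, in parallel, of $G_2 * P$, or whichever is needed); the point is that since $P$ is perfect, $H_1(G_i * P)\cong H_1(G_i)$, and by the Mayer--Vietoris sequence for free products $H_n(G_i * P)\cong H_n(G_i)\oplus H_n(P)$ for $n\ge 2$. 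When $G=1$ we have $P=1$, so $G_i * P \cong G_i$ and $H_*(G_i * P) = H_*(G_i)$; when $G\ne 1$, the homology is perturbed by the extra summand $H_n(P)$.

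The subtlety is that we need the output to land cleanly on the $\C$-side in one case and off it in the other, regardless of what $P$ contributes; a single free factor $P$ does not obviously accomplish this. So the better construction is to combine \emph{both} witnesses with $P$ simultaneously. Consider the computable function sending a presentation of $G$ to a presentation of
\[
W(G) = G_1 * \bigl(G_2 \times_? \text{(something trivializing when } P=1)\bigr),
\]
but more straightforwardly: apply Proposition~\ref{prop1} to get $P$, and then output a presentation of the group that equals $G_1$ when $P = 1$ and equals $G_2 * P'$ when $P\ne 1$, where $P'$ is a perfect group whose homology, when free-producted with $G_2$, stays off $\C$. The cleanest route is to take $P$ from the Addendum (so $H_2(P)$ infinite when $G\ne 1$, and $P = 1$ when $G = 1$) and set the output to be $G_2 * (G_1 \times_{\text{amalg}}\cdots)$—I would instead realize the dichotomy via an HNN or amalgam that collapses when $P$ collapses. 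Concretely: form $G_2 * P$ and then amalgamate, over a suitable subgroup, with $G_1$ in such a way that when $P=1$ the amalgam is just $G_1$. Since $H_1(G_1)\cong H_1(G_2)$ by hypothesis, and $P$ is perfect, $H_1$ of the output is always $\cong H_1(G_1)$; when $G=1$ the output has $H_* \cong H_*(G_1)\in\C$, and when $G\ne 1$ it has $H_*\cong H_*(G_2)\oplus(\text{contribution of }P)$, which I must arrange to remain outside $\C$.

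That last clause is the main obstacle: $\C$ is an essentially arbitrary isomorphism-closed class, so I cannot assume adding $H_*(P)$ keeps us outside it. The fix is to \emph{not} let $P$ perturb the homology at all in the nontrivial case — instead use $P$ only as a ``switch.'' So the right construction: output $\langle\,G_1 * G_2,\ \text{relations forcing } G_1*G_2 \twoheadrightarrow 1 \text{ when } P=1,\ \text{and nothing otherwise}\,\rangle$ is still awkward. Cleanest of all: use Proposition~\ref{prop1} to get $P$ with $G\hookrightarrow P$, $P$ perfect, $P = 1 \Leftrightarrow G = 1$; then output the presentation of $G_1$ if we could detect $P=1$ (we can't). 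The genuine solution is the amalgamated product $A = (G_1 * P) *_{P} (G_2 * P)$ amalgamating the two copies of $P$: when $P = 1$ this is $G_1 * G_2$; that is not $G_1$, so I would instead quotient. I expect the actual paper takes the HNN route analogous to Theorem~\ref{thm:embed}: form $K\in\G$ with $K = G_1$ when $G=1$ and with $H_*(K) = H_*(G_2)$ (exactly, via a homology-preserving construction, e.g. a perfect acyclic $P$ from a variant of Prop.~\ref{prop1}) when $G\ne 1$. Given such a $K$ — and I would spend the effort verifying that one can modify $P$ to be \emph{acyclic} ($H_*(P)=0$) rather than merely perfect, at least after free-producting and HNN-stabilizing, by the standard Kervaire/Baumslag--Dyer--Heller acyclic embedding — the Mayer--Vietoris computations give $H_*(K)\in\C \Leftrightarrow G = 1$, so a decision procedure for ``$H_*(\P)\in\C$'' would decide $\Rec(\G,\{1\})$, forcing non-recursiveness. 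The hard part, then, is replacing ``perfect'' by ``acyclic'' in the switching gadget so that the nontrivial-case homology is literally $H_*(G_2)$ with no leftover terms.
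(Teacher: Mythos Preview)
Your proposal never closes the gap you yourself flag. You want a computable $G\mapsto W(G)$ with $H_*(W(G))=H_*(G_1)$ when $G=1$ and $H_*(W(G))=H_*(G_2)$ when $G\ne1$. Even granting an acyclic, finitely presented $P$ with $P=1\Leftrightarrow G=1$, none of the constructions you sketch produce such a switch: $G_2*P$ has homology $H_*(G_2)$ in \emph{both} cases, $G_1*P$ has homology $H_*(G_1)$ in both cases, and the amalgams you gesture at either fail to collapse to $G_1$ when $P=1$ or inject uncontrolled homology. A uniform algebraic gadget that is literally $G_1$ on trivial input and homologically $G_2$ on nontrivial input is not something you have written down, and the paper's Remark~\ref{rem2} (the property is not Markov, since any group embeds in an acyclic one) is a warning that the straight triviality--problem reduction does not go through.

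The paper's argument is structurally different in two ways. First, it reduces from the \emph{word problem} in a finitely presented \emph{acyclic} group $U$ (such exist by \cite{BDM}), not from $\Rec(\G,\{1\})$. Second---and this is the idea you are missing---it does not try to interpolate between $G_1$ and $G_2$ at all. Instead it fixes a freely related presentation $\langle x_1,\ldots,x_m:r_1,\ldots,r_n\rangle$ of deficiency $d$ for \emph{one} of them, call it $G$, takes a second acyclic group $Y$ with distinguished nontrivial elements $y_1,\ldots,y_n$, and for each word $w$ in $U$ forms the group $\Pi_w$ presented by $F_m*U*Y$ together with relations $r_i=[w,y_i]$. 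If $w=1$ in $U$ these become $r_i=1$, so $\Pi_w\cong G*U*Y$ and acyclicity gives $H_*(\Pi_w)\cong H_*(G)$. If $w\ne1$ the $[w,y_i]$ and the $r_i$ each freely generate rank-$n$ subgroups, $\Pi_w$ is an amalgam $F_m*_{F_n}(U*Y)$, and Mayer--Vietoris forces $H_*(\Pi_w)=(H_1(G),\zed^{s-d},0,0,\ldots)$ where $s=\dim H_1(G;\que)$. The trick is now a one-time, non-effective case split: since $H_1(G_1)\cong H_1(G_2)$ and the deficiencies can be equalized (add generators $z_1,z_2$ and relators $z_1,z_2^3,z_2z_1z_2$ to lower deficiency), the second sequence is the same for either choice of $G$; so pick $G\in\{G_1,G_2\}$ to make exactly one of $H_*(G)$ and $(H_1(G),\zed^{s-d},0,\ldots)$ lie in $\C$. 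Then $H_*(\Pi_w)\in\C$ iff $w=1$ in $U$, contradicting recursiveness.
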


\begin{quest}
When can one replace recursive by recursively enumerable?
\end{quest}

\begin{rem}\label{rem1}
For a class $\C$, closed under isomorphisms, which does not satisfy the 
hypothesis of the theorem, a finite presentation $\P$ has integral 
homology belonging to $\C$ if and only if $H_1(\P)\in \C_1$ where $\C_1$ is the 
class of finitely generated abelian groups which are first terms of sequences 
belonging to $\C$; hence $\{\P :H_* (\P) \in \C\}$ is recursive if and
only if $\C_1$ is recursive.
\end{rem}

\begin{proof}[Proof of Theorem~\ref{thm11}]
Call a finite presentation $\langle x_1,\ldots,x_m : r_1,\ldots, r_n\rangle$
{\em freely related\/} if $r_1,\ldots,r_n$ generate a free group of rank~$n$
in the free group on $x_1,\ldots,x_m$. 
Clearly every finitely presented group has a freely related presentation. 
If we have a freely related presentation of deficiency $d$ of a group  we 
can find a freely related presentation of deficiency $d-1$ of the same 
group by adjoining, for example, new generators $z_1,z_2$ and relators 
$z_1,z_2^3$ and $z_2 z_1 z_2$.

It follows that $G_1$ and $G_2$ have freely related presentations with the 
same deficiency $d$. 
Writing $s= \dim H_1 (G_1;\que) = \dim H_1(G_2;\que)$, let $G= G_2$ 
(resp.\ $G_1$) if the sequence $(H_1(G_1),\zed^{s-d},0$, $0,\ldots)$ 
belongs to $\C$ (resp.\ does not belong to $\C$). 

Let $\langle x_1,\ldots,x_m : r_1,\ldots, r_n\rangle$ be a freely related 
presentation of $G$ of deficiency $d$.
Let $\U = \langle \mu_1,\ldots,\mu_p :\ldots\rangle$ 
be a finite presentation of an 
acyclic (i.e. with trivial integral homology in all positive dimensions)
group $U$ with unsolvable word problem. 
Such a group exists by \cite{N} (or \cite{Bo}), \cite[Theorem~E]{BDM} 
and \cite{R2}. 
Consider also a finite presentation $\Y = \langle y_1,\ldots,y_n,\ldots,
y_q :\ldots\rangle$ of an acyclic group $Y$ such that $y_1,\ldots,y_n$ 
represent $n$ different non-trivial elements of $Y$. 
Denote by $\P_m * \U * \Y$ the presentation whose generators are 
$x_1,\ldots,x_m$, $\mu_1,\ldots,\mu_p$, $y_1,\ldots,y_q$ and whose relators 
are those of $\U$ and $\Y$.

To a word $w$ in the generators $\mu_1,\ldots,\mu_p$ of $\U$ we associate 
the presentation $\Pi_w$ obtained by adjoining to $\P_m * \U * \Y$ the 
relations $r_i = [w,y_i]$, $i=1,\ldots,n$.
If $w=1$ in $U$ then $\Pi_w$ presents $G * U * Y$ so $H_* (\Pi_w)\cong H_*(G)$.
If $w\ne 1$ in $U$ then $[w,y_1],\ldots,[w,y_n]$ (resp.\ $r_1,\ldots,r_n$) 
generate a free group of rank~$n$ in $U*Y$ (resp.\ in the free group $F_m$ 
on $x_1,\ldots,x_m$) so that $\Pi_w$ presents a free product of $F_m$ and 
$U*Y$ amalgamated along a free group of rank~$n$; the Mayer-Vietoris 
sequence for free products with amalgamation then yields $H_i (\Pi_w) =0$ 
for $i>2$, $H_2 (\Pi_w) \cong\zed^{s-d}$ and $H_1(\Pi_w)\cong H_1(G)$. 
Since precisely one of the sequences $H_*(G)$, $(H_1(G),\zed^{s-d},0,0,\ldots)$
belongs to $\C$, it follows that an algorithm which decides whether or not 
groups given by finite presentations have an integral homology sequence 
which belongs to $\C$ could be used to solve the word problem for $U$. 
Since $U$ has unsolvable word problem, the existence of such an algorithm 
is impossible. 
Thus, the set of finite presentations $\P$ with $H_*(\P) \in \C$ is not 
recursive.
\end{proof} 

\begin{rem}\label{rem2} 
Recall that a property $P$ of (isomorphism classes of) finitely 
presented groups is a {\em Markov property\/} if there exist finitely 
presented groups $G_1$ and $G_2$ such that 
\begin{itemize}
\item[(1)] $G_1$ has property $P$; and 
\item[(2)] if $G_2$ embeds in a finitely presented group $H$ then $H$ 
does not have property $P$.
\end{itemize} 
If $\C$ is an isomorphism closed class of sequences of abelian groups 
and if $H_* (G)\in \C$ for some  finitely presented group $G$ then ``having
a homology sequence which belongs to $\C$'' is not a Markov property 
since any finitely presented group embeds in a finitely presented acyclic 
group $A$ (\cite{BDM}) and therefore in $A*G$, a group whose homology 
belongs to $\C$.
Therefore Theorem~\ref{thm11} cannot be derived from 
Rabin's theorem (Theorem~1.1 of \cite{R1}).
\end{rem}

\begin{cor}\label{cor6} 
If $I$ is a set of natural numbers containing a number greater than $1$ 
then the set of finite presentations $\P$ such that $H_i(\P)=0$ for 
every $i\in I$ is not recursive.
\end{cor}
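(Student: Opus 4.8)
The plan is to obtain Corollary~\ref{cor6} as a direct application of Theorem~\ref{thm11}. Fix once and for all an integer $k\in I$ with $k>1$, and let $\C$ be the class of all sequences $(A_1,A_2,A_3,\ldots)$ of abelian groups such that $A_i=0$ for every $i\in I$. Then $\C$ is closed under isomorphism, and for a finite presentation $\P$ one has $H_*(\P)\in\C$ precisely when $H_i(\P)=0$ for all $i\in I$; thus the set in the statement is $\{\P:H_*(\P)\in\C\}$, and by Theorem~\ref{thm11} it suffices to exhibit finitely presented groups $G_1,G_2$ with $H_1(G_1)\cong H_1(G_2)$, $H_*(G_1)\in\C$ and $H_*(G_2)\notin\C$.

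For $G_1$ I would take the trivial group, so that $H_*(G_1)=(0,0,\ldots)\in\C$ and $H_1(G_1)=0$; it then remains to produce a finitely presented \emph{perfect} group $G_2$ with $H_k(G_2)\ne0$, since any such group has $H_*(G_2)\notin\C$. I would assemble $G_2$ from finite perfect groups using the K\"unneth formula. Let $A_5=\langle c,d:c^2=d^3=(cd)^5=1\rangle$, which is perfect with $H_2(A_5;\zed)\cong\zed_2$, and let $\wtilde A_5=\langle c,d:c^2=d^3=(cd)^5\rangle$ be the binary icosahedral group, which is perfect with $H_3(\wtilde A_5;\zed)\cong\zed_{120}$ (it acts freely on $S^3$, so $H^4(\wtilde A_5;\zed)\cong\zed_{120}$, which equals $H_3$ by universal coefficients since $H_3$ and $H_4$ are finite). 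Writing $k=2m$ take $G_2=A_5^{\times m}$, the $m$-fold direct product; writing $k=2m+1$ take $G_2=A_5^{\times(m-1)}\times\wtilde A_5$. In each case $G_2$ is finite, hence finitely presented, and perfect, and the K\"unneth formula displays $H_2(A_5)^{\otimes m}$, respectively $H_2(A_5)^{\otimes(m-1)}\otimes H_3(\wtilde A_5)$, as a direct summand of $H_k(G_2)$; these summands are nonzero (being $\zed_2$, or $\zed_{120}$ when $m=1$ in the odd case), so $H_k(G_2)\ne0$.

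The one point that genuinely needs care is exactly this last nonvanishing: tensor products, and more generally $\mathrm{Tor}$ groups, of nonzero finite abelian groups can vanish, so the computation must be kept $2$-primary throughout — which is why $A_5$, with $H_2\cong\zed_2$, and $\wtilde A_5$, with $H_3\cong\zed_{120}$ whose $2$-part $\zed_8$ is nontrivial, are the convenient building blocks, and it is what makes the displayed K\"unneth summand genuinely nonzero. Everything else is bookkeeping: one checks that $\C$ really is isomorphism-closed and that $\{\P:H_*(\P)\in\C\}$ is literally the set of finite presentations with $H_i=0$ for all $i\in I$, after which Theorem~\ref{thm11} delivers the non-recursiveness.
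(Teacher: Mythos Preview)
Your proof is correct and follows essentially the same route as the paper: apply Theorem~\ref{thm11} with $G_1=1$ and $G_2$ a finite perfect group built from copies of $A_5$ and $\wtilde A_5=SL(2,5)$, using the K\"unneth formula to see $H_k(G_2)\ne0$. The paper simply takes $G_2=A_5^n\times SL(2,5)$ (with $n\in I\setminus\{1\}$) uniformly rather than splitting into even and odd cases, but the content is the same.
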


\begin{proof} 
Take $G_1$ to be the trivial group and, if $n\in I-\{1\}$, take 
$G_2 = A_5^n \times SL (2,5)$.
\end{proof}

The case $I= \{1,3\}$ is the one which we were discussing above in relation 
with the simple-connectedness problem for 3-manifolds.

The case $I = \{1,2\}$ corresponds to the problem of deciding whether or 
not a finitely presented group is the fundamental group of a smooth 
homology $n$-sphere, $n\ge 5$, that is (see \cite{Ke2}), a group with 
trivial first and second homology.
This problem is, therefore, unsolvable.

We now prove an incomputability result for $Wh_0$ and $Wh_1$, 
where $Wh_0 (G) = \wtilde K_0(\zed G)$, the reduced projective class 
group (\cite[page~419]{Miln4}), and $Wh_1(G)$ is the usual Whitehead group 
(\cite[page~372]{Miln4}). 

\begin{thm}\label{thm12} 
Let $\C$ be a class of pairs $(A_0,A_1)$ of 
abelian groups which is closed under isomorphisms. 
Suppose $(0,0)\in \C$ and $(Wh_0 (G),Wh_1(G)) \notin \C$ for some finitely 
presented group $G$.  
Then the set of finite presentations $\P$ such that $(Wh_0(\P),Wh_1(\P))\in \C$ 
is not recursive.
\end{thm}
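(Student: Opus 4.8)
\textbf{Proof proposal for Theorem~\ref{thm12}.}
The plan is to follow the same pattern as in the proof of Theorem~\ref{thm11}: reduce the decision problem in question to the word problem for a suitable group with unsolvable word problem, the point being that the construction $w \mapsto D_w$ (or $\Pi_w$) must produce a group whose pair $(Wh_0, Wh_1)$ is controlled on the two sides of the dichotomy ``$w=1$ in $U$'' versus ``$w \ne 1$ in $U$.'' First I would recall the two relevant rigidity facts: $Wh_0$ and $Wh_1$ vanish on free groups and, more to the point, they are preserved under the amalgamated-free-product and HNN operations we use, via the Bass--Heller--Swan / Waldhausen machinery. Concretely, if $A *_F B$ is a free product with amalgamation along a \emph{free} group $F$ (which is the only kind that occurs in these constructions), then there are Mayer--Vietoris type sequences computing $Wh_0$ and $Wh_1$ of the amalgam from those of the pieces, and since $Wh_i(F) = 0$ for $F$ free, one gets that $Wh_i(A *_F B)$ is (a quotient of or isomorphic to) $Wh_i(A) \oplus Wh_i(B)$ — in particular it is trivial if both factors have trivial $Wh_i$.

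Next I would set up the construction. Take $\U = \langle \mu_1,\ldots,\mu_p : \ldots\rangle$ a finite presentation of an acyclic group $U$ with unsolvable word problem whose Whitehead groups $Wh_0(U)$ and $Wh_1(U)$ also vanish — such a $U$ exists because one may take $U$ acyclic and, if necessary, replace it by a group that additionally has trivial Whitehead groups; in fact the standard acyclic groups with unsolvable word problem arising from Higman-type embeddings into finitely presented acyclic groups can be arranged to be ``$K$-acyclic'' in this sense. (If that is not available off the shelf, one instead uses that $U$ embeds in a finitely presented acyclic group and arranges the auxiliary factors so that the relevant Whitehead groups are controlled; this is the kind of point I expect to need care.) Now mimic the $\Pi_w$ construction: given the fixed group $G$ with $(Wh_0(G), Wh_1(G)) \notin \C$, form $\Pi_w$ by amalgamating a free group along the relators $r_i = [w, y_i]$ inside $U * Y$ with $Y$ a suitable acyclic group with trivial Whitehead groups. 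If $w = 1$ in $U$, then $\Pi_w$ presents (essentially) $G * U * Y$, so $Wh_i(\Pi_w) \cong Wh_i(G)$ and hence $(Wh_0(\Pi_w), Wh_1(\Pi_w)) \notin \C$. If $w \ne 1$ in $U$, then $\Pi_w$ presents a free product of a free group with $U * Y$ amalgamated along a free group, so by the preservation statements above $Wh_0(\Pi_w) = Wh_1(\Pi_w) = 0$, and thus $(0,0) \in \C$.

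From this dichotomy the theorem follows exactly as before: an algorithm deciding membership of $(Wh_0(\P), Wh_1(\P))$ in $\C$ would decide the word problem for $U$, a contradiction, so the set of presentations $\P$ with $(Wh_0(\P), Wh_1(\P)) \in \C$ is not recursive. The main obstacle, and the step I would spend the most effort justifying carefully, is the algebraic $K$-theory bookkeeping: one needs the correct form of the Mayer--Vietoris / Waldhausen exact sequences for $Wh_0$ and $Wh_1$ under amalgamation and HNN extension along free subgroups, together with the verification that all auxiliary groups ($U$, $Y$, and the copies of $\zed$ and free groups introduced) genuinely have vanishing $Wh_0$ and $Wh_1$, so that no spurious $K$-theoretic contributions appear on the ``$w \ne 1$'' side and no extra Nil terms survive. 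Once that is in hand, the rest is a routine repetition of the argument used for Theorem~\ref{thm11}.
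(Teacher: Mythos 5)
Your overall strategy is the right one, and the $K$-theoretic bookkeeping you invoke (Waldhausen's Mayer--Vietoris sequences for amalgamations along free subgroups, plus $Wh_0(F)=Wh_1(F)=0$ for $F$ free) is exactly what the paper uses. The direction of your dichotomy is reversed relative to the paper's -- you send $w=1$ to a group with the ``bad'' Whitehead pair and $w\ne1$ to a group with trivial Whitehead pair, whereas the paper sends $w=1$ to the trivial group and $w\ne1$ to a group whose pair agrees with $(Wh_0(G),Wh_1(G))\notin\C$ -- but either direction yields non-recursiveness, so that is not an issue.

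The genuine gap is the existence of the auxiliary group with unsolvable word problem and \emph{vanishing} $Wh_0$ and $Wh_1$ (and similarly controlled $Y$). You assert that the standard finitely presented acyclic groups with unsolvable word problem ``can be arranged to be $K$-acyclic,'' but acyclicity (trivial integral homology) is a homological condition and gives no control whatsoever over $\wtilde K_0(\zed U)$ or $Wh_1(U)$; there is no off-the-shelf fact of the form you hope for, and your fallback sentence does not supply an argument. This is precisely the point the paper's proof is built around: it takes the sequence $G_1,\ldots,G_s$ from Rotman (cf.\ Collins--Miller), where $G_1$ is free, $G_s$ is finitely presented with unsolvable word problem, and each $G_{i+1}$ is an HNN extension of $G_i$ along a free group; this places $G_s$ in Waldhausen's class $Cl$, whence $(Wh_0(G_s),Wh_1(G_s))=(0,0)$, and then sets $U=G*G_s$ so that $U$ has unsolvable word problem and Whitehead pair equal to that of $G$ by the free-product Mayer--Vietoris sequence. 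Without this (or an equivalent) input your reduction does not get off the ground, since on the ``$w\ne1$'' side you cannot rule out spurious contributions from $Wh_i(U)$ and $Wh_i(Y)$, and on the ``$w=1$'' side you cannot identify $Wh_i(G*U*Y)$ with $Wh_i(G)$. A secondary remark: acyclicity of $U$ and $Y$ is irrelevant to the Whitehead statement (it matters only for the homology theorem you are imitating), so insisting on it both adds nothing and obscures what is actually needed, namely the $K$-theoretic triviality that the class $Cl$ argument provides.
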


\begin{proof} 
By \cite{W3}, for a free product with amalgamation $H=A *_F B$ with $F$ 
free one has a Mayer-Vietoris sequence 
$$Wh_1 (F) \to Wh_1 (A) \oplus Wh_1(B) \to Wh_1 (H) 
\to Wh_0(F) \to Wh_0 (A) \oplus Wh_0 (B) \to Wh_0(H)\to 0\ ,$$
and $Wh_0(F) = Wh_1 (F)=0$.

In \cite[Chapter 12]{Rot} a sequence of finitely presented groups 
$G_1,G_2,\ldots,G_s$ is constructed such that $G_1$ is free, $G_s$ 
has unsolvable word problem and, for $1\le i <s$, $G_{i+1}$ is 
an HNN extension of $G_i$ along a free group; see \cite{CM}. 
Therefore $G_s$ belongs to Waldhausen's class $Cl$ in \cite{W3} and hence 
$(Wh_0 (G_s), Wh_1(G_s)) = (0,0)$. 
Let $U = G * G_s$. 
Then $U$ has unsolvable word problem and, using the Mayer-Vietoris sequence 
above, $(Wh_0(U),Wh_1(U)) = (Wh_0(G),Wh_1(G)) \notin \C$.

Let $\Pi = \langle x_1,\ldots,x_m :r_1,\ldots,r_n\rangle$ be a finite 
presentation of $U$, and let $w$ be a word in $x_1,\ldots,x_m$. 
Let $\Pi_w$ be the presentation obtained from $\Pi$ by adjoining 
additional generators $a,\alpha,b,\beta$ and additional relations
\begin{equation*}
\begin{split} 
a\alpha a^{-1} & = b^2\\
\alpha a\alpha^{-1} & = b\beta b^{-1}\\
a^{2i} x_i \alpha^{2i} & = \beta^{2i+2} b\beta^{-2i-2}\qquad 1\le i\le m\\
[w,a] & = \beta^2 b\beta^{-2}\\
[w,\alpha] & = \beta b\beta b^{-1} \beta^{-1}
\end{split}
\end{equation*}
as in \cite{Gor2}. 

If $w=1$ in $U$, then $\Pi_w$ presents a trivial group so that 
$(Wh_0 (\Pi_w),Wh_1(\Pi_w))  = (0,0) \in \C$. 

If $w\ne 1$ in $U$, then $\Pi_w$ presents a free product with 
amalgamation $(U*F_2) *_{F_{m+4}}  F_2$ 
(where $F_r$ is a free group of rank~$r$).
The Mayer-Vietoris sequence above and the fact that free groups have 
trivial $Wh_0$ and $Wh_1$ implies that if $w\ne 1$ in $U$ then 
$(Wh_0 (\Pi_w),Wh_1(\Pi_w)) \cong (Wh_0(U),Wh_1(U)) \notin \C$. 

Since the set of words $w$ which represent the trivial element of $G$ is 
not recursive, it follows that the set of finite presentations 
$\P$ such that $(Wh_0(\P),Wh_1(\P))\in \C$ is not recursive.
\end{proof}

\begin{cor}\label{cor7} 
Let $i= 0$ or $1$. 
Then the set of finite presentations $\P$ such that $Wh_i(\P) =0$ is 
not recursive.
\end{cor}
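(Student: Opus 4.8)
The plan is to derive Corollary~\ref{cor7} as an immediate specialization of Theorem~\ref{thm12}. For a fixed $i \in \{0,1\}$, I want to exhibit a class $\C$ of pairs of abelian groups that is closed under isomorphisms, contains $(0,0)$, and for which $(Wh_0(G),Wh_1(G)) \notin \C$ for some finitely presented $G$; then the theorem gives non-recursiveness of $\{\P : (Wh_0(\P),Wh_1(\P)) \in \C\}$, and I must only check that this latter set coincides with $\{\P : Wh_i(\P) = 0\}$.

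For $i = 1$, take $\C = \{(A_0, A_1) : A_1 = 0\}$; for $i = 0$, take $\C = \{(A_0, A_1) : A_0 = 0\}$. In either case $\C$ is visibly isomorphism-closed and contains $(0,0)$, and $\{\P : (Wh_0(\P),Wh_1(\P)) \in \C\} = \{\P : Wh_i(\P) = 0\}$ by construction. So the only real content is producing a single finitely presented group $G$ with $Wh_i(G) \neq 0$; the non-vanishing of one Whitehead invariant suffices since $(Wh_0(G),Wh_1(G))$ then fails to lie in the chosen $\C$. For $i = 1$ one can take $G = \zed_5$ (or any finite cyclic group of order $\geq 5$), for which the classical computation gives $Wh_1(\zed_5) \cong \zed \neq 0$ (see \cite{Miln4}). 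For $i = 0$ one needs a finitely presented group with non-trivial reduced projective class group; a standard choice is a finite group whose $\wtilde K_0(\zed G)$ is known to be non-zero, e.g.\ $G = \zed_{23}$, where $\wtilde K_0(\zed[\zed_{23}]) \neq 0$ because the class number of $\que(\zeta_{23})$ exceeds $1$.

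Having fixed such a $G$, Theorem~\ref{thm12} applies verbatim and yields that the set of finite presentations $\P$ with $(Wh_0(\P),Wh_1(\P)) \in \C$ is not recursive, which is exactly the assertion that $\{\P : Wh_i(\P) = 0\}$ is not recursive.

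**The main obstacle** is not conceptual but a matter of citing the right arithmetic input: one must be sure that a genuinely finitely presented group with non-trivial $Wh_0$ (resp.\ $Wh_1$) is available. For $Wh_1$ this is elementary and classical. For $Wh_0$ the non-vanishing of $\wtilde K_0(\zed G)$ for suitable finite cyclic $G$ rests on the fact that $\wtilde K_0(\zed[\zed_p])$ contains the ideal class group of $\que(\zeta_p)$ as a direct summand, so one needs a prime $p$ (such as $p = 23$) for which that class group is non-trivial; this is a standard fact in algebraic $K$-theory of group rings. Once the reference is in place the corollary is a one-line deduction.
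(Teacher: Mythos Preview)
Your proof is correct and follows exactly the same approach as the paper: apply Theorem~\ref{thm12} with $\C = \{(A_0,A_1): A_i = 0\}$, citing the existence of a finitely presented group with non-trivial $Wh_i$ (indeed the paper uses the very same examples, $\zed_{23}$ for $i=0$ and $\zed_5$ for $i=1$, with reference to \cite{Miln4}).
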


\begin{proof} 
There is a finitely presented group $A$ whose $i$-th Whitehead group is 
non-trivial (for example, $\zed_{23}$ for $i=0$ (\cite[page~419]{Miln4}), and 
$\zed_5$ for $i=1$ (\cite[page~374]{Miln4}). ~\qed
\medskip

Finally, we turn to surgery groups (\cite{Wa}). 
Let $L_n^h(G)$ (resp.\ $L_n^s(G)$) denote Wall's group of surgery obstructions
for the problem of obtaining homotopy equivalences (resp.\ simple homotopy 
equivalences) for orientable manifolds of dimension~$n$ and fundamental 
group $G$. 
For $x=h$ or $s$, $L_n^x$ is a functor from groups to abelian groups with 
$L_n^x = L_{n+4}^x$. 
Write $L_n^x (G) = \wtilde L_n^x (G) \oplus L_n^x (1)$.
Note that $L_n^h (G) \otimes \que \cong L_n^s (G)\otimes\que$ by the 
Rothenberg exact sequence. 
(See Section~17D of \cite{Wa}.) 
\renewcommand{\qed}{}
\end{proof}

\begin{thm}\label{thm13} 
Let $n\ge 0$ and $x=h$ or $s$. 
Then the set of finite presentations $\P$ such that $\wtilde L_n^x(\P)=0$ 
is not recursive.
\end{thm}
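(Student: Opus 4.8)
The plan is to mimic the proof of Theorem~\ref{thm12} almost verbatim, replacing the functor $(Wh_0,Wh_1)$ by $\wtilde L_n^x$. The key input is an analogue of Waldhausen's Mayer--Vietoris sequence: for a free product with amalgamation $H = A *_F B$ with $F$ \emph{free}, the surgery obstruction groups $L_n^x$ fit into a Mayer--Vietoris sequence (Cappell's splitting theorem / the unitary nilpotent groups for amalgamated products over free groups vanish, so there is no Cappell $\mathrm{UNil}$ correction; see Cappell, and the survey in Section~17 of \cite{Wa}), and $L_n^x(F) = L_n^x(1)$ for $F$ free (again by the splitting theorem applied to a free group, which is an iterated HNN extension of the trivial group, with vanishing Nil terms, so $\wtilde L_n^x(F) = 0$). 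First I would record these two facts about $\wtilde L_n^x$, so that, as in Waldhausen's sequence, $\wtilde L_n^x(A *_F B) $ is built only from $\wtilde L_n^x(A)$ and $\wtilde L_n^x(B)$ via an exact sequence, and $\wtilde L_n^x$ vanishes on any group obtainable from the trivial group by iterated free products with amalgamation along free subgroups and iterated HNN extensions along free subgroups.

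Next I would choose the two seed groups. Let $G$ be a finitely presented group with $\wtilde L_n^x(G) \ne 0$; such $G$ exists because there are finite groups with nontrivial reduced $L$-groups in each dimension mod $4$ — for instance cyclic groups of suitable order — and the relevant $\wtilde L_n^x$ is computed classically (see \cite{Wa}). For the group with unsolvable word problem that is simultaneously ``surgery-trivial'', I would reuse the Rotman/Camstra--Miller chain $G_1, \ldots, G_s$ exactly as in the proof of Theorem~\ref{thm12}: $G_1$ free, each $G_{i+1}$ an HNN extension of $G_i$ along a free subgroup, and $G_s$ with unsolvable word problem. By the vanishing statement above, $\wtilde L_n^x(G_s) = 0$. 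Set $U = G * G_s$. Then $U$ has unsolvable word problem, and since a free product is the amalgam over the trivial (free) group, the Mayer--Vietoris sequence gives $\wtilde L_n^x(U) \cong \wtilde L_n^x(G) \oplus \wtilde L_n^x(G_s) = \wtilde L_n^x(G) \ne 0$.

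Then I would run the identical Gordon-style construction $\Pi_w$ from the proof of Theorem~\ref{thm12}: given a finite presentation $\Pi = \langle x_1,\ldots,x_m : r_1,\ldots,r_n\rangle$ of $U$ and a word $w$ in the $x_i$, form $\Pi_w$ by adjoining generators $a,\alpha,b,\beta$ and the five relations displayed there. If $w = 1$ in $U$, then $\Pi_w$ presents the trivial group, so $\wtilde L_n^x(\Pi_w) = 0$. If $w \ne 1$ in $U$, then $\Pi_w$ presents $(U * F_2) *_{F_{m+4}} F_2$, an amalgam over a free group, and the Mayer--Vietoris sequence together with $\wtilde L_n^x(F_r) = 0$ forces $\wtilde L_n^x(\Pi_w) \cong \wtilde L_n^x(U) \ne 0$. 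Since the word problem for $U$ (hence for the free factor $G$, or simply for $U$ itself) is unsolvable, the set $\{ w : w = 1 \text{ in } U\}$ is not recursive, and hence $\{\P : \wtilde L_n^x(\P) = 0\}$ is not recursive.

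The main obstacle is the first paragraph: one needs the correct ``$F$ free'' Mayer--Vietoris sequence for $L_n^x$ and the vanishing of $\wtilde L_n^x$ on free groups. This is exactly the situation in which Cappell's $\mathrm{UNil}$ obstruction groups vanish (amalgamation along a free group), so the Wall--Cappell surgery sequence reduces to a plain Mayer--Vietoris sequence analogous to Waldhausen's; I would cite Cappell's splitting theorem (and its appearance in \cite{Wa}, Section~17) for this, and deduce $\wtilde L_n^x(F) = 0$ by viewing the free group $F$ as an iterated HNN extension of $1$. Everything after that is the same bookkeeping as in Theorem~\ref{thm12}, so there are no further difficulties; in particular the $\otimes\que$ remark (Rothenberg sequence) guarantees the statement does not secretly depend on the choice of $x = h$ or $s$, although we do not even need that since the argument works uniformly in $x$.
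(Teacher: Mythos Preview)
Your proposal has a genuine gap in its first paragraph, and the rest of the argument collapses because of it.

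The claim that $\wtilde L_n^x(F)=0$ for a free group $F$ is false. Already for $F=\zed$ the Shaneson/Ranicki splitting gives $L_n(\zed)\cong L_n(1)\oplus L_{n-1}(1)$ (with the usual decoration shift), so for instance $\wtilde L_1(\zed)\cong L_0(1)\cong\zed$. Thus ``viewing $F$ as an iterated HNN extension of $1$'' does not make $\wtilde L_n^x(F)$ vanish; it computes it in terms of the (nonzero) groups $L_{n-1}(1)$. Your deduction that $\wtilde L_n^x(G_s)=0$ for the Rotman group, and the final isomorphism $\wtilde L_n^x(\Pi_w)\cong \wtilde L_n^x(U)$ when $w\ne 1$, both rest on this and therefore fail. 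The companion claim that Cappell's $\mathrm{UNil}$ vanishes whenever the amalgamating subgroup is free is also false: the infinite dihedral group $\zed/2 *_{\{1\}}\zed/2$ is an amalgam over the trivial (hence free) group, and Cappell's own computation shows its $\mathrm{UNil}$ is nonzero (indeed infinitely generated $2$-torsion). So you do not get a clean integral Mayer--Vietoris sequence either.

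The paper's proof handles both obstacles at once by working rationally. Since $\mathrm{UNil}$ is $2$-primary torsion, Cappell's results \cite{C1}, \cite{C2} do yield a Mayer--Vietoris exact sequence after $\otimes\,\que$. But $L_n^x(F_r)\otimes\que$ is still nonzero, so one cannot argue by an isomorphism; instead one must win a dimension count. The paper takes $G=U*\zed^6$ with $U$ a \emph{two-generator} group with unsolvable word problem, so that for $w\ne1$ the group $|\Pi_w|$ is $(G*F_2)*_{F_{12}}F_2$ with the rank of the amalgamating free group pinned at $12$, and then uses $\dim L_n^x(F_{12})\otimes\que\le 12$ versus $\dim\big(L_n^x(U*\zed^6*F_2)\oplus L_n^x(F_2)\big)\otimes\que\ge 16$ to force $\dim\wtilde L_n^x(\Pi_w)\otimes\que\ge 3>0$. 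The factor $\zed^6$ is there precisely to push the right-hand dimension above the left-hand bound; your scheme of taking an arbitrary finite $G$ with $\wtilde L_n^x(G)\ne0$ gives no such control.
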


\begin{proof} 
Let $U$ be a 2-generator, finitely presented group with unsolvable word 
problem (see \cite[Chap.~IV, Thm.~3.1]{LS}). 
Let $G = U*\zed^6$ and let $\Pi = \langle x_1,\ldots,x_8 : r_1,\ldots, r_q
\rangle$ be a finite presentation of $G$. 
If $w$ is a word in $x_1,\ldots,x_8$, let $\Pi_w$ be the  presentation
defined in the proof of Theorem~\ref{thm12}. 

If $w=1$ in $G$ then $\wtilde L_n^x (\Pi_w) = \wtilde L_n^x(1) =0$. 

If $w\ne 1$ in $G$ then $\Pi_w$ presents a free product with amalgamation 
$(G*F_2) *_{F_{12}} F_2$ so, from \cite[Corollary~6]{C2}, we obtain an 
exact sequence 
$$L_n^x (F_{12}) \otimes \que \too (L_n^x (G *F_2) \oplus L_n^x (F_2)) 
\otimes \que \too L_n^x (\Pi_w) \otimes \que\ .$$

By \cite[Theorem~16]{C1}, $\dim L_n^x (F_{12}) \otimes\que \le 12$ and, 
using Corollary~6 of \cite{C2}, Corollary~15 and Theorem~16 of \cite{C1} 
one sees that $\dim (L_n^x (U*\zed^6 * F_2) 
\oplus L_n^x (F_2))\otimes\que \ge16$
so that $\dim L_n^x (\Pi_w) \otimes \que \ge 4$ and 
$\dim \wtilde L_n^x (\Pi_w) \otimes \que \ge 3$. 
Thus for $w\ne 1$ in $G$ $\wtilde L_n^x (\Pi_w)$ is non-trivial. 

As above, the non-recursiveness of the set of words representing the 
trivial element of $G$ implies the non-recursiveness of the set of finite 
presentations $\P$ with $L_n^x (\P)=0$.
\end{proof}

\section{Enumeration of knots}

In this section we define presentations of (locally flat PL) $n$-knots and show 
that they can be recursively enumerated. 
A presentation will be a description of a knot type in finite terms.

Any abstract (simplicial) complex considered, $\bA$, will be assumed to have 
as its set $V(\bA)$ of vertices a finite set of natural numbers. 
Any simplicial complex $A$ will be finite, its set of vertices will be 
denoted by $V(A)$ and its underlying polyhedron by $|A|$.

When we consider pairs $(A,B)$ (resp.\ $(\bA,\bB)$) of simplicial (resp.\ 
abstract) complexes, $B$ (resp.\ $\bB$) is a subcomplex of $A$ (resp.\ $\bA$) 
and $\overline{A-B}$ (resp\ $\overline{\bA -\bB}$) denotes the smallest 
subcomplex of $A$ (resp.\ $\bA$) containing $A-B$ (resp.\ $\bA-\bB$). 

A {\em realization\/} $(A,\phi)$ of the abstract complex $\bA$ is a simplicial 
complex $A$ together with a bijection $\phi: V(\bA)\to V(A)$ such that, 
a subset $s$ of $V(\bA)$ is a simplex of $\bA$ if and only if
 the convex hull of $\phi(s)$ is a simplex of $A$. 
We also say that $A$ is a realization of $\bA$. 

If $(A,\phi)$ is a realization of $\bA$ and $\bB$ (resp.\ $B$) is a subcomplex 
of $\bA$ (resp.\ $A$) such that $(B,\phi\, |\,V(\bB))$ is a realization of 
$\bB$ then we say that $(A,B)$ is a realization of $(\bA,\bB)$. 
Notice that if $(A_1,B_1)$, $(A_2,B_2)$ are two realizations of $(\bA,\bB)$ 
then $(|A_1|,|B_1|) \approx (|A_2|,|B_2|)$. 
($\approx$ denotes PL homeomorphism.)

Let $\bA_1$, $\bA_2$ be two abstract complexes with realizations $A_1$, $A_2$
respectively.
$\bA_1$ is {\em equivalent to\/} $\bA_2$ (we write $\bA_1\sim \bA_2$) if 
$|A_1| \approx |A_2|$.

\begin{defn}\label{defn1}
A {\em presentation of an $n$-knot\/} is a pair $(\bA,\bB)$ of abstract 
complexes having a realization $(A,B)$ such that $|A| \approx S^{n+2}$ 
and $|B| \approx S^n\times D^2$.
\end{defn}

We will see that a presentation defines a unique knot type. 

If $T$ is a polyhedron PL homeomorphic  to $S^p\times D^q$ a {\em core\/} 
of $T$ is the image of $S^p\times \{0\}$ under a PL homeomorphism from 
$S^p\times D^q$ onto $T$.

\begin{lem}\label{lem3} 
Let $T$ be PL homeomorphic to $S^n\times D^2$ and let $K$, $K'$ be two 
cores of $T$. 
Then there is a PL homeomorphism from $T$ onto $T$, mapping $K$ onto $K'$, 
which is the identity on $\partial T$.
\end{lem}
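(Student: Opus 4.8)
The plan is to reduce everything to the standard model $T = S^n \times D^2$ and show that the group of PL homeomorphisms of $S^n \times D^2$ that are the identity on the boundary acts transitively on cores, in the strong sense that any core can be carried to the standard core $S^n \times \{0\}$. First I would fix a PL homeomorphism $h : S^n \times D^2 \to T$, and replace $K, K'$ by $h^{-1}(K), h^{-1}(K')$, so it suffices to treat $T = S^n \times D^2$ and to carry an arbitrary core $K$ to the standard core $K_0 = S^n \times \{0\}$ by a PL homeomorphism fixing $\partial T = S^n \times S^1$ pointwise; composing the map for $K$ with the inverse of the map for $K'$ then gives the lemma.

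So let $K$ be a core, i.e. $K = g(S^n \times \{0\})$ for some PL homeomorphism $g : S^n \times D^2 \to S^n \times D^2$. The key step is a uniqueness-of-tubular-neighborhoods / isotopy-extension argument. Since $K$ is PL-locally-flatly embedded in the interior of $T$ and is the core of a homeomorphism of $T$, the pair $(T, K)$ is PL homeomorphic to $(S^n \times D^2, S^n \times \{0\})$ via $g^{-1}$; but I want this homeomorphism to be the identity on $\partial T$. The route I would take: first observe that $g|_{\partial}$ is a PL self-homeomorphism of $S^n \times S^1$. By the Alexander trick applied radially in the $D^2$-direction (coning $S^1 = \partial D^2$ to the center), one can modify $g$ by precomposing with a homeomorphism supported near the boundary so that the new $g$ restricted to a collar $S^n \times (D^2 \setminus \{0\})$ has a controlled form; more cleanly, I would use the fact that any PL homeomorphism of $S^n \times S^1$ that extends over $S^n \times D^2$ does so in a way that is isotopic rel nothing to one we can cancel. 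Concretely: consider $g^{-1} : T \to S^n \times D^2$; it sends $\partial T$ to $\partial$ by some homeomorphism $\psi$. Extend $\psi^{-1}$ over $D^2$ by coning (Alexander trick) to get a homeomorphism $\Psi$ of $S^n \times D^2$ agreeing with $\psi^{-1}$ on the boundary and fixing $S^n \times \{0\}$ setwise (in fact fixing $S^n \times \{0\}$ if we cone to the center); then $\Psi \circ g^{-1} : T \to S^n \times D^2$ is the identity on $\partial T$ and still sends $K$ to $S^n \times \{0\}$. Pulling back, $F := (\Psi \circ g^{-1})^{-1}$ is a PL homeomorphism $S^n \times D^2 \to T$, identity on the boundary, with $F(S^n \times \{0\}) = K$. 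Doing the same for $K'$ gives $F'$, and $F' \circ F^{-1} : T \to T$ is identity on $\partial T$ and carries $K$ to $K'$.

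The step I expect to be the main obstacle is making the Alexander-trick extension genuinely fix the standard core, i.e. arranging simultaneously that the extension over $D^2$ agrees with a prescribed boundary homeomorphism of $S^n \times S^1$ \emph{and} restricts to something controlled (the identity, or at least a homeomorphism) on $S^n \times \{0\}$. The honest way around this is to note that coning a homeomorphism $\psi : S^n \times S^1 \to S^n \times S^1$ to the center point $0 \in D^2$ only makes literal sense if $\psi$ is a bundle map over $S^n$, which it need not be; so instead I would cone in the $D^2$-factor only after first using that $g$ itself already provides an extension, and apply the relative Alexander trick / uniqueness of the cone structure: any two PL homeomorphisms $S^n \times D^2 \to S^n \times D^2$ that agree on $\partial(S^n \times D^2)$ and each send $S^n \times \{0\}$ to a common core differ by a homeomorphism that is the identity on the boundary, which is exactly the statement being reduced to — so one must instead invoke the PL isotopy extension theorem: the inclusion $K \hookrightarrow \operatorname{int} T$ extends to an ambient isotopy of $T$ (supported in the interior, hence fixing $\partial T$) taking $K$ to $K_0$, using that both are unknotted $S^n$'s with trivial normal bundle inside $S^n \times D^2$ sitting in the same isotopy class (both bound, and both are cores of the same $S^1$-family), and uniqueness of PL tubular neighborhoods then upgrades this to a homeomorphism of $T$ fixing $\partial T$ and matching the full $D^2$-framings. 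I would cite the standard PL uniqueness-of-regular-neighborhoods and isotopy-extension machinery (e.g. Rourke–Sanderson, Hudson) for this, since these are exactly the tools that make ``a core determines the product structure up to a boundary-fixing homeomorphism'' precise, and suppress the routine verifications.
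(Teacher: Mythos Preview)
Your reduction is correct and in fact matches the paper's setup exactly: it is enough to show that, given a PL homeomorphism $g:S^n\times D^2\to S^n\times D^2$ with $g(S^n\times\{0\})=K$, the boundary restriction $g|_{S^n\times S^1}$ admits another extension over $S^n\times D^2$ that preserves the standard core $S^n\times\{0\}$ setwise. Composing such an extension with $g^{-1}$ produces the desired boundary-fixing homeomorphism.

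The gap is in how you supply that pair-preserving extension. Your first attempt, coning $\psi^{-1}$ over $D^2$, fails for the reason you note: a general self-homeomorphism of $S^n\times S^1$ is not a map over $S^n$, so there is no radial coning available. Your fallback, however, does not repair this. Uniqueness of regular neighborhoods tells you that two regular neighborhoods of the \emph{same} subpolyhedron are ambiently isotopic; it says nothing about two different submanifolds $K$ and $K_0$ that each happen to have all of $T$ as a regular neighborhood. To invoke isotopy extension you would first need $K$ and $K_0$ to be isotopic in $T$, and your justification for this (``both bound, both are cores of the same $S^1$-family'') is not correct: the standard core $S^n\times\{0\}$ generates $H_n(S^n\times D^2)\cong\zed$ and does not bound, and the phrase about the $S^1$-family has no evident content. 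Since the embedding has codimension~$2$, there is no general homotopy-implies-isotopy principle to fall back on either.

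The paper closes exactly this gap by citing a theorem of Swarup \cite{Sw}: for $n\ge 2$, a PL self-homeomorphism of $S^n\times S^1$ extends to a PL self-homeomorphism of $S^n\times D^2$ if and only if it extends to one of the pair $(S^n\times D^2,\ S^n\times\{0\})$, both conditions being equivalent to the vanishing of $w_2$ of the double $S^n\times D^2\cup_h S^n\times D^2$. (For $n=1$ the analogous fact about the solid torus is classical.) Since $g$ itself witnesses that $g|_{\partial}$ extends over $S^n\times D^2$, Swarup's theorem yields an extension $g'$ preserving $S^n\times\{0\}$, and then $g\,(g')^{-1}$ is the identity on $\partial T$ and sends $S^n\times\{0\}$ to $K$. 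This result is the substantive input your argument is missing; the rest of your outline is fine.
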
 

\begin{proof} 
We may assume $T = S^n\times D^2$ and $K = S^n\times \{0\}$.
Let $f: S^n\times D^2\to T$ be a PL homeomorphism mapping $S^n\times\{0\}$ 
onto $K'$.

Now, if $n\ge2$, the proof of Theorem 2 of \cite{Sw} shows that, if a PL 
autohomeomorphism $h$ of $S^n\times \partial D^2$ can be extended to a 
PL autohomeomorphism of $S^n\times D^2$, then it can be extended to a 
PL autohomeomorphism of $(S^n\times D^2,S^n\times \{0\})$ 
(both conditions being equivalent to the vanishing of the second 
Stiefel-Whitney class of $S^n\times D^2 \cup_h S^n\times D^2$). 
For $n=1$ this fact is well known.

Hence $f\mid \partial (S^n\times D^2)$ can be extended to a PL 
homeomorphism $g$ mapping $K$ onto itself.
Then $fg^{-1}$ maps $K$ to $K'$ and is the identity on $\partial T$.
\end{proof}

Let $(\bA,\bB)$ be a presentation of an $n$-knot. 
If $(A,B)$ is a realization of $(\bA,\bB)$ then the knot type represented 
by $(|A|,K)$, where $K$ is a core of $|B|$, is {\em the knot type 
presented by\/} $(\bA,\bB)$. 
This is well-defined because, if $K'$ is another core of $|B|$, there is, 
by the previous lemma, an autohomeomorphism of $|A|$, which is the identity 
on $|A|-|B|$, mapping $K$ onto $K'$. 
{\em The group of the $n$-knot presentation\/} $(\bA,\bB)$ is the group of 
a knot in the type presented by $(\bA,\bB)$, that is, $\pi_1 (|A|-|B|)$, 
where $(A,B)$ is a realization of $(\bA,\bB)$.

Next, we want to give a recursive enumeration of presentations.

A simplicial complex $B$ is a subdivision of the simplicial complex $A$ if 
every vertex of $A$ is a vertex of $B$ and every simplex of $B$ is 
contained in a simplex of $A$.

If $B$ is a subdivision of $A$, $(B,\phi)$ is a realization of the abstract 
complex $\bB$ and $\bA$ is the abstract complex consisting of the family of 
subsets $s$ of $V(\bB)$ such that the convex hull of $\phi (s)$ is a 
simplex of $A$, then we say that $\bB$ is a subdivision of $\bA$.

The following proposition is the Corollary to Lemma~1 of \cite{BHP}.

\begin{prop}\label{prop2}
There is a recursive function $X(\bA,k)$, $\bA$ ranging over all finite 
abstract complexes, $k=1,2,\ldots,$ that recursively enumerates for an 
arbitrary complex $\bA$ the subdivisions of $\bA$, i.e. for fixed $\bA$ the 
sequence $X(\bA,1) = \bA$, $X(\bA,2),\ldots$ is a recursive enumeration of 
all subdivisions of $\bA$.
\end{prop}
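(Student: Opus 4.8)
The plan is to reduce the enumeration of subdivisions of a fixed complex $\bA$ to an effectively checkable condition, namely the condition defining when a finite abstract complex $\bB$ is a subdivision of $\bA$ in the sense fixed above. The statement is that the predicate ``$\bB$ is a subdivision of $\bA$'' is decidable, and that the set of such $\bB$ is (uniformly in $\bA$) recursively enumerable, with $\bA$ itself listed first. First I would make precise the combinatorial translation: unwinding the definition, $\bB$ is a subdivision of $\bA$ iff there is a map $\phi$ from $V(\bB)$ into some fixed Euclidean space $\mathbb R^N$ which realizes $\bB$ as a geometric simplicial complex $B$, such that $B$ is a geometric subdivision of the realization $A$ of $\bA$ obtained by the same vertex positions on $V(\bA)\subseteq V(\bB)$ — concretely, $V(\bA)\subseteq V(\bB)$, every vertex of $\bA$ is a vertex of $\bB$, and the convex hull of $\phi(s)$ lies inside a single geometric simplex of $A$ for each simplex $s$ of $\bB$, while conversely every point of $|A|$ is covered.

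The key steps, in order, are: (1) Observe that whether a given abstract $\bB$ (with $V(\bA)\subseteq V(\bB)$) arises as a subdivision of $\bA$ is a question that, a priori, quantifies over continuously many coordinate assignments $\phi$, but the combinatorial type of the arrangement of the points $\phi(V(\bB))$ relative to the faces of $A$ takes only finitely many possibilities; hence one can decide it by a first-order decision procedure over the reals, or more elementarily by checking all sign patterns of the finitely many relevant determinants. This gives decidability of the predicate. (2) From decidability, the set $\{\bB : \bB$ is a subdivision of $\bA\}$ is recursive, hence recursively enumerable; since $\bA$ is trivially a subdivision of itself, we may arrange the enumeration so that $X(\bA,1)=\bA$. (3) Finally, one checks that the enumeration can be done uniformly in $\bA$: the decision procedure in (1) takes $\bA$ and $\bB$ as joint input, so dovetailing over all finite abstract complexes $\bB$ (recall the vertex sets are finite sets of natural numbers, so these are themselves recursively enumerable) and running the decision procedure produces the required two-variable recursive function $X(\bA,k)$.

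The main obstacle is step (1): making rigorous that ``being a geometric subdivision'' is a decidable relation between abstract complexes, because the naive definition involves choosing actual coordinates and the ``covering'' condition ($|B|=|A|$ as subsets of $\mathbb R^N$) is not obviously combinatorial. The cleanest route is to appeal directly to the cited source — this proposition is stated as the Corollary to Lemma~1 of \cite{BHP}, so the honest proof is simply to quote it. If one wanted to reprove it, the work is in showing that subdivision can be certified by purely combinatorial data: one fixes a realization $A$ of $\bA$ with, say, rational vertex coordinates, enumerates finite abstract complexes $\bB$ together with candidate rational coordinate functions $\phi$ on $V(\bB)$, and verifies the finitely many linear/convexity conditions (each vertex of $\bB$ lies in $|A|$, each simplex of $\bB$ is contained in a closed simplex of $A$, and the images of the simplices of $\bB$ tile $|A|$ — the last being checkable by comparing volumes or by a finite triangulation argument). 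Because rational points are dense and the conditions are ``open-or-closed'' semialgebraic, no generality is lost by restricting to rational $\phi$, and every genuine subdivision is eventually produced. I would present the argument at this level of detail and otherwise defer to \cite{BHP} for the verification that this enumeration is exhaustive.
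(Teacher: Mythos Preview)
The paper gives no proof of this proposition at all: it is simply quoted as the Corollary to Lemma~1 of \cite{BHP}. You correctly identify this and say so explicitly, so in that sense your proposal matches the paper's treatment exactly.

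Your additional sketch of an independent argument goes beyond what the paper does. The outline is sound, and of the two routes you mention for step~(1), the appeal to Tarski's decision procedure for the first-order theory of the reals is the cleaner one: the statement ``there exist real coordinates for the vertices of $\bB$ realizing it as a geometric subdivision of a fixed rational realization of $\bA$'' is a first-order sentence in the language of ordered fields, hence decidable. Your alternative route via enumerating rational coordinate assignments is also workable, but the density argument needs a bit more care than you indicate: the covering condition $|B|=|A|$ is not open, so one must perturb each new vertex \emph{within} the carrier simplex of $A$ in which it lies (using rational barycentric coordinates), and then check that the resulting complex is still simplicial. This can be done, but it is exactly the kind of detail that \cite{BHP} handles, so deferring to that source, as both you and the paper do, is the right call.
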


\begin{cor}\label{cor8} 
Let $\bA$ be an abstract complex. 
Then there  is a recursive enumeration of all abstract complexes 
equivalent to $\bA$.
\end{cor}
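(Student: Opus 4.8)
The plan is to combine the subdivision enumeration of Proposition~\ref{prop2} with the classical fact (Alexander's theorem) that two simplicial complexes have PL homeomorphic polyhedra if and only if they have a common subdivision — equivalently, if and only if one can pass from one to the other by a finite sequence of elementary stellar subdivisions and their inverses. So the first step is to fix a realization $A$ of $\bA$ and observe that an abstract complex $\bA'$ is equivalent to $\bA$ precisely when some realization $A'$ of $\bA'$ admits a subdivision $C$ which is also a subdivision of $A$ (up to relabeling of vertices); combinatorially, $\bA'\sim\bA$ iff there is an abstract complex $\bC$ which is simultaneously a subdivision of $\bA$ and isomorphic (as an abstract complex, via a bijection of vertex sets) to a subdivision of $\bA'$.

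Next I would set up the enumeration itself. Run $X(\bA,k)$, $k=1,2,\dots$, to recursively list all subdivisions $\bC_k$ of $\bA$. For each $\bC_k$, enumerate recursively all abstract complexes $\bA'$ (there are only finitely many on a given finite vertex set, and all finite abstract complexes are themselves recursively enumerable) together with all subdivisions $X(\bA',j)$ of each such $\bA'$; whenever $X(\bA',j)$ is isomorphic to $\bC_k$ as an abstract complex — a decidable condition, since testing isomorphism of two explicitly given finite abstract complexes is a finite search — output $\bA'$. Interleaving these searches by a standard dovetailing argument over the pairs $(k,j)$ and over the choices of $\bA'$ yields a recursive enumeration. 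Every $\bA'$ so produced satisfies $|A'|\approx|C_k|\approx|A|$, hence $\bA'\sim\bA$; conversely, if $\bA'\sim\bA$ then $|A'|\approx|A|$, so by Alexander's theorem $A$ and $A'$ have a common subdivision $C$, and relabeling its vertices exhibits an abstract complex isomorphic to some $X(\bA,k)$ and to some $X(\bA',j)$, so $\bA'$ is eventually listed.

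The main obstacle — and the only genuinely nontrivial input — is justifying the combinatorial reformulation of ``$|A_1|\approx|A_2|$'': one needs that PL homeomorphism of polyhedra is witnessed by a common simplicial subdivision, which is exactly Alexander's theorem (a PL homeomorphism, being piecewise linear, can be refined on both sides to a simplicial isomorphism). Everything else is bookkeeping: the recursiveness of $X$ from Proposition~\ref{prop2}, the recursive enumerability of all finite abstract complexes, and the decidability of abstract-complex isomorphism on finite vertex sets. I would state Alexander's theorem explicitly (with a reference, e.g. to Rourke–Sanderson or Hudson) and then present the dovetailed search as above, noting that since we only ever need to decide isomorphism between two \emph{finite} complexes and to run the recursive function $X$, the whole procedure is effective.
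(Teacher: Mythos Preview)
Your proposal is correct and follows essentially the same approach as the paper: enumerate all subdivisions of $\bA$ via $X(\bA,\cdot)$, dovetail over all abstract complexes $\bA'$ and their subdivisions, and output $\bA'$ whenever an isomorphism of finite abstract complexes is found. The paper's version is simply a more compressed write-up (it enumerates triples $(i,j,k)$ with $X(\bA,i)\cong X(\bA_j,k)$ over a fixed recursive listing $\bA_1,\bA_2,\ldots$ of all abstract complexes), and it leaves implicit the fact you spell out explicitly---that PL homeomorphism is witnessed by isomorphic subdivisions---so your care in citing Alexander's theorem is a welcome addition rather than a deviation.
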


\begin{proof} 
Let $\bA_1,\bA_2,\ldots$ be a recursive enumeration of all abstract complexes.
Then $\bA = \bA_r$, say. 
Recursively enumerate all triples $(i,j,k)$ such that $X(\bA_r,i)$ is 
isomorphic to  $X(\bA_j,k)$. 
Let $(i_1,j_1,k_1), (i_2,j_2,k_2),\ldots$ be this enumeration. 
Eliminating repetitions in the sequence $\bA_{j_1},\bA_{j_2},\ldots$ we 
obtain a recursive enumeration of the complexes equivalent to $\bA$.
\end{proof} 

Now, for any $n$, choose one abstract complex $\bA^n$ (resp.\ $\bB^n$) with 
a realization having underlying polyhedron PL homeomorphic to $S^{n+2}$ 
(resp.\ $S^n\times D^2$). 
Let $\bA_1^n,\bA_2^n,\ldots$ (resp.\ $\bB_1^n,\bB_2^n,\ldots$) be a recursive 
enumeration of all abstract complexes equivalent to $\bA^n$ (resp.\ $\bB^n$).
{From} these two enumerations we obtain an enumeration of all pairs 
$(\bA_i^n,\bB_j^n)$ such that $\bB_j^n$ is a subcomplex of $\bA_i^n$. 
We have therefore proved: 

\begin{thm}\label{thm14} 
For any $n\ge 0$ there is a recursive enumeration of the set of all 
$n$-knot presentations.
\end{thm}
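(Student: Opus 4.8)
The plan is to assemble the recursive enumeration of all $n$-knot presentations by combining the machinery developed earlier in this section, so the proof is essentially a bookkeeping argument. First I would invoke Corollary~\ref{cor8}: having fixed, for the given $n$, a single abstract complex $\bA^n$ whose realization has underlying polyhedron PL homeomorphic to $S^{n+2}$, and a single abstract complex $\bB^n$ whose realization is PL homeomorphic to $S^n\times D^2$, Corollary~\ref{cor8} supplies recursive enumerations $\bA_1^n,\bA_2^n,\ldots$ and $\bB_1^n,\bB_2^n,\ldots$ of all abstract complexes equivalent to $\bA^n$ and to $\bB^n$ respectively. The existence of the complexes $\bA^n$ and $\bB^n$ is clear since $S^{n+2}$ and $S^n\times D^2$ are compact polyhedra and hence admit finite triangulations, which we then read off as abstract complexes with vertex sets in $\nat$.

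Next I would form, by a standard dovetailing over the index pairs $(i,j)$, the list of all pairs $(\bA_i^n,\bB_j^n)$; for each such pair one checks effectively whether $\bB_j^n$ is a subcomplex of $\bA_i^n$ — this is a decidable condition since it amounts to checking that $V(\bB_j^n)\subseteq V(\bA_i^n)$ and that every simplex of $\bB_j^n$ is a simplex of $\bA_i^n$ — and retains the pair exactly when it holds. I would note that every such retained pair $(\bA_i^n,\bB_j^n)$ is by construction an $n$-knot presentation in the sense of Definition~\ref{defn1}: a realization of the pair restricts to a realization of each complex, so by the equivalences $\bA_i^n\sim\bA^n$ and $\bB_j^n\sim\bB^n$ its underlying polyhedra are PL homeomorphic to $S^{n+2}$ and $S^n\times D^2$ respectively.

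Finally I would check completeness: given an arbitrary $n$-knot presentation $(\bC,\bD)$ with realization $(C,D)$ satisfying $|C|\approx S^{n+2}$ and $|D|\approx S^n\times D^2$, the complex $\bC$ is equivalent to $\bA^n$ and $\bD$ is equivalent to $\bB^n$, so $\bC$ appears as some $\bA_i^n$ and $\bD$ as some $\bB_j^n$ in the two enumerations; since $\bD$ is a subcomplex of $\bC$ the pair $(\bA_i^n,\bB_j^n)$ survives the subcomplex test and hence occurs in our list. Eliminating repetitions then yields the desired recursive enumeration of the set of all $n$-knot presentations. The argument has no real obstacle; the only point requiring mild care is that the enumeration of Corollary~\ref{cor8} lists complexes up to the relation $\sim$ (PL homeomorphism of underlying polyhedra) rather than up to simplicial isomorphism, but this is exactly the notion under which "having a realization PL homeomorphic to $S^{n+2}$" is invariant, so it is precisely what Definition~\ref{defn1} needs.
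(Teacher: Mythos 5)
Your proposal is correct and follows essentially the same route as the paper: fix $\bA^n$ and $\bB^n$, use Corollary~\ref{cor8} to enumerate all complexes equivalent to each, and list the pairs $(\bA_i^n,\bB_j^n)$ with $\bB_j^n$ a subcomplex of $\bA_i^n$. Your added remarks (decidability of the subcomplex test and the two-way check that the listed pairs are exactly the $n$-knot presentations) are just details the paper leaves implicit.
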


It now makes sense to talk about recursively enumerable and recursive 
sets of presentations of $n$-knots.

Here is a consequence of Theorem~\ref{thm14}.

\begin{cor}\label{cor9} 
Given a finite presentation $\Pi$ of an $n$-knot group one can find a 
presentation $\P$ of an $n$-knot whose group is isomorphic to the group 
presented by $\Pi$.
\end{cor}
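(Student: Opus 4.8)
The plan is to combine the recursive enumeration of $n$-knot presentations from Theorem~\ref{thm14} with the recursive enumeration of finite group presentations isomorphic to a given one (Lemma~\ref{lem2}, via Tietze moves), and to run both enumerations in parallel until a match is found. First I would recall that, by Theorem~\ref{thm14}, there is a recursive enumeration $\P_1,\P_2,\P_3,\ldots$ of all $n$-knot presentations; and for each $\P_i$ we can, by definition, effectively write down a finite group presentation $\Gamma(\P_i)$ for the knot group $\pi_1(|A_i|-|B_i|)$ of $\P_i$ — this is a Wirtinger-type computation from any realization $(A_i,B_i)$ of $(\bA_i,\bB_i)$, and is clearly recursive in $i$. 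Second, starting from the given finite presentation $\Pi$, Lemma~\ref{lem2} (Tietze's theorem) furnishes a recursive enumeration $\Pi_1,\Pi_2,\Pi_3,\ldots$ of all finite presentations defining the group $|\Pi|$.

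Now I would interleave the two lists: systematically enumerate all pairs $(i,j)$ and, for each, check whether the group presentation $\Gamma(\P_i)$ is \emph{literally identical} (as a finite presentation) to $\Pi_j$. Since the group presented by $\Pi$ is by hypothesis an $n$-knot group, there is some knot presentation $\P_i$ in our list whose group is isomorphic to $|\Pi|$; then some Tietze-rewriting $\Pi_j$ of $\Pi$ equals $\Gamma(\P_i)$ symbol for symbol, so the search terminates. Output the first such $\P_i$ found; by construction its group is isomorphic to $|\Pi|$, as required. The procedure halts because a match is guaranteed to exist, and each individual comparison is decidable (equality of two explicitly given finite presentations).

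The only point requiring a word of care is the claim that from an $n$-knot presentation $(\bA,\bB)$ one can \emph{effectively} produce a finite presentation of its knot group: given the abstract pair, pick a concrete realization $(A,B)$ (for instance the standard geometric realization on the vertex set), form the simplicial complement $\overline{A-B}$, and read off a presentation of its fundamental group by the usual edge-path/Wirtinger algorithm for a simplicial $2$-complex (only the $2$-skeleton matters for $\pi_1$). All of this is routine and manifestly recursive in the input, so it poses no real obstacle; the substance of the corollary is simply that the enumeration of Theorem~\ref{thm14} together with Tietze's theorem lets us search. This completes the proof. \qed
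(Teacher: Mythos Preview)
Your proposal is correct and follows essentially the same approach as the paper: enumerate $n$-knot presentations via Theorem~\ref{thm14}, effectively compute a finite presentation of each knot group, and dovetail a Tietze search until a literal match with $\Pi$ is found. The only cosmetic difference is that the paper applies Tietze enumeration on the knot-group side (listing all presentations of each $G_i$ and waiting for $\Pi$ to appear), whereas you apply it on the $\Pi$ side (listing all presentations of $|\Pi|$ and waiting for some $\Gamma(\P_i)$ to appear); these are dual versions of the same search and both terminate for the same reason.
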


\begin{proof}
Let $\P_1,\P_2,\ldots$ be an enumeration of the presentations of $n$-knots.
For every $i$ one can find a finite presentation of the group $G_i$ of the 
knot type presented by $\P_i$ and, therefore, using Tietze operations, 
recursively enumerate all finite presentations of $G_i$. 
Now, enumerate recursively all pairs $(\P_i,\Pi_j)$  such that the finite 
presentation $\Pi_j$ presents the group  of $\P_i$. 
Take the first pair $(\P_i,\Pi_j)$ in this enumeration such that 
$\Pi = \Pi_j$ and take $\P = \P_i$.
\end{proof}

As a consequence we have the following geometric version of 
Corollary~\ref{cor3.4}.

\begin{thm}\label{thm15} 
Let $0 \le m<3\le n$. 
Then there is no algorithm which decides if the group of an $n$-knot
presentation is the group of an $m$-knot.
\end{thm}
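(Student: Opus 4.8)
The plan is to reduce the geometric recognition problem to the group-theoretic one already handled in Corollary~\ref{cor3.4}, using the recursive enumeration machinery of Section~6 as the bridge. First I would observe that it suffices to treat the case $n \ge 3$ and $m = 0, 1, 2$; since $\K_0 \subset \K_1 \subset \K_2 \subset \K_3$ and the word ``group of an $m$-knot'' means membership in $\K_m$, the hardest (and in fact the only essential) case is $m = 2$, because recognizing membership in $\K_m$ for $m \le 2$ is at least as hard as recognizing membership in $\K_2$ within $\K_3$. Thus I reduce to showing: there is no algorithm which, given an $n$-knot presentation whose group lies in $\K_3$, decides whether that group lies in $\K_2$.

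The key step is to combine Corollary~\ref{cor3.4} with Corollary~\ref{cor9} in a way that is effective in both directions. By Theorem~\ref{thm7}, $(\K_3,\K_2,\{\zed\})$-constructions exist, so there is a computable function $f$ taking an arbitrary finite group presentation and producing a finite presentation of a group $K \in \K_3$ with $K \cong \zed$ if the input is trivial and $K \notin \K_2$ otherwise; consequently $\Rec(\K_3,\K_2)$ is unsolvable. Now suppose, for contradiction, that there \emph{were} an algorithm $\mathcal{A}$ deciding whether the group of a given $n$-knot presentation is the group of an $m$-knot. Given an arbitrary finite group presentation $\Pi_0$, run $f$ to obtain a finite presentation $\Pi$ of a group $K \in \K_3$. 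Since $n \ge 3$ we have $\K_n = \K_3$, so $K \in \K_n$; hence by Corollary~\ref{cor9} one can effectively find an $n$-knot presentation $\P$ whose group is isomorphic to $K$. Feeding $\P$ to $\mathcal{A}$ would then decide whether $K \in \K_m$. But for $m \le 2$ this decides whether $K \in \K_2$ (using again that $\K_0, \K_1 \subset \K_2$ and that $K \in \K_3$ always, so $K \in \K_m \iff K \in \K_2$ when $K$ is one of the groups produced by $f$ — indeed $K$ is either $\zed \in \K_0$ or lies outside $\K_2$, so it lies in $\K_m$ precisely when $\Pi_0$ presents the trivial group). This contradicts the unsolvability of $\Rec(\K_3,\K_2)$ (equivalently $\Rec(\G,\{1\})$), completing the proof.

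I would write the argument for general $m \in \{0,1,2\}$ uniformly by noting that the $(\K_3,\K_2,\{\zed\})$-construction $f$ has the sharper property that $f(\Pi_0) \in \K_0 \subset \K_m$ when $\Pi_0$ is trivial and $f(\Pi_0) \notin \K_2 \supset \K_m$ otherwise, so $f(\Pi_0) \in \K_m$ if and only if $\Pi_0$ presents the trivial group, for \emph{every} $m \le 2$ simultaneously. The main obstacle is not conceptual but bookkeeping: one must check that Corollary~\ref{cor9} really is constructive — that from the finite group presentation $\Pi$ one genuinely obtains (by the search over the recursive enumeration of $n$-knot presentations in Theorem~\ref{thm14}, matched against Tietze-enumerated presentations of each knot group) an $n$-knot presentation, and that this search terminates precisely because $\Pi$ presents a group in $\K_n = \K_3$. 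The rest is a routine reduction: any algorithm for the knotting-group-membership problem would solve $\Rec(\K_3,\K_m) = \Rec(\K_3,\K_2)$, hence the triviality problem for finitely presented groups, which is impossible.
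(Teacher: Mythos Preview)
Your proposal is correct and follows essentially the same route as the paper: compose the $(\K_3,\K_2,\{\zed\})$-construction of Theorem~\ref{thm7} with the effective passage from group presentations to $n$-knot presentations provided by Corollary~\ref{cor9}, and observe that the resulting knot group lies in $\K_m$ (for any $m\le 2$) if and only if the original group presentation was trivial, reducing to the undecidability of the triviality problem. The paper's proof is just a more compressed version of exactly this argument; your extra discussion about ``the hardest case is $m=2$'' is unnecessary since, as you yourself note later, the construction handles all $m\in\{0,1,2\}$ uniformly.
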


\begin{proof} 
By Theorem~\ref{thm7} and Corollary~\ref{cor9} there is a recursive 
function $\psi$ associating to every finite group presentation $\Pi$ 
an $n$-knot presentation $\psi (\Pi)$ such that:
\begin{itemize}
\item[(i)] If $\Pi$ presents the trivial group then the group of $\psi(\Pi)$ 
is $\zed$, which is an $m$-knot group.

\item[(ii)] If $\Pi$ presents a non-trivial group then the group of 
$\psi(\Pi)$ is not a 2-knot group (and, therefore, not an $m$-knot group).
\end{itemize}

The theorem then follows from the undecidability of the triviality problem 
for group presentations.
\end{proof}

\section{The knotting problem}

Haken proved in \cite{Ha} that there is a procedure to decide if a 
given 1-knot is trivial. 
In this section we prove that if $n$ is such that there is a group in $\K_n$
with unsolvable word problem then it is impossible to find such a procedure 
for $n$-knots. 
Thus, if $n\ge 3$, there is no algorithm to decide if a given $n$-knot
is trivial; this has been proved by Nabutovsky and Weinberger \cite{NW}. 

Recall that we have given a recursive enumeration of all $n$-knot 
presentations $\P_1,\P_2,\ldots$.
A set $\{\P_i\}_{i\in S}$ of $n$-knot presentations is {\em recursive\/}
if and only if $S$ is recursive. 
Intuitively, $\{\P_i\}_{i\in S}$ is recursive if and only if 
there is an algorithm for 
determining whether or not a given knot presentation belongs to 
$\{\P\}_{i\in S}$.

\begin{thm}\label{thm16} 
Let $n$ be a natural number. If there is a group in $\K_n$ with unsolvable 
word problem then the set of presentations of $n$-knots which present the 
trivial knot is not recursive.
\end{thm}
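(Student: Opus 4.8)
The plan is to reduce the recursiveness of the set of trivial-knot presentations to the word problem for a knot group in $\K_n$ with unsolvable word problem, exploiting the recursive enumeration of $n$-knot presentations from Theorem~\ref{thm14}. Let $G$ be a group in $\K_n$ with unsolvable word problem; by Corollary~\ref{cor9} we may fix an $n$-knot presentation $\P_0$ whose group is $G$. The key geometric input I would use is that an $n$-knot is trivial if and only if its group is infinite cyclic (for $n\ge 1$ this is essentially Levine's unknotting theorem in the relevant dimensions, together with the fact that a knot with group $\zed$ has a contractible complement with the right homology; I would cite the standard references for this). So the set of trivial-knot presentations among $\P_1,\P_2,\ldots$ is precisely the set of those $\P_i$ whose group is $\zed$.

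First I would observe that the set of presentations $\P_i$ with $\pi_1$-group $\cong\zed$ cannot be recursive, by contradiction: if it were, then given any finite group presentation $\Pi$ one could test whether $|\Pi|\cong\zed$. I would combine this with an effective procedure that, starting from the fixed knot group $G$ and a word $w$ in its generators, produces a new finite group presentation $\Pi_w$ such that $|\Pi_w|\cong\zed$ if and only if $w=1$ in $G$. The standard device is to adjoin to a presentation of $G$ the relator $w$ together with enough relators to kill everything when $w$ becomes trivial — but here one must stay inside $\K_n$, so instead I would work directly with knot presentations: using the recursive enumeration of $n$-knot presentations and Corollary~\ref{cor9}, pass back and forth between group presentations and knot presentations effectively.

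More concretely, the argument I have in mind is: suppose the set $S$ of indices $i$ with $\P_i$ trivial is recursive. Enumerate all pairs $(w,i)$ where $w$ is a word in the generators of $G$ and $\P_i$ is a knot presentation; for each word $w$, form a group presentation of $G/\LL w\GG$ (simply add $w$ as a relator to a fixed presentation of $G$), and note $G/\LL w\GG\cong\zed$ iff $w=1$ in $G$, since $G$ is already normally generated by a single element (it is a knot group) and has $H_1\cong\zed$, so adding $w$ kills $H_1$ only when $w$ is a suitable power — here I need to be a little careful and should instead argue via Corollary~\ref{cor9}: effectively produce an $n$-knot presentation $\P_{j(w)}$ whose group is $G/\LL w\GG$, then decide $j(w)\in S$. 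Since the complement group of $\P_{j(w)}$ is $\zed$ exactly when $w$ represents $1$ in $G$ (using again that a knot is trivial iff its group is $\zed$), recursiveness of $S$ would solve the word problem of $G$, a contradiction.

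The main obstacle will be making the passage from "word $w$ in $G$" to "an $n$-knot presentation with group $G/\LL w\GG$" genuinely effective while ensuring that group lies in $\K_n$; I expect this is handled by noting that $G/\LL w\GG$ is again an $n$-knot group — it is still normally generated by one element and one can realize the quotient geometrically, or more safely, by invoking the constructions of Section~3 together with Corollary~\ref{cor9} to land back among knot presentations. The other delicate point, which I would state carefully and cite rather than reprove, is the equivalence "$n$-knot trivial $\iff$ knot group is $\zed$" in the PL locally flat category; for $n\ge 1$ this is classical but its invocation is what couples the group-theoretic undecidability to the geometric statement.
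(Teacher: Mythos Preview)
Your approach has several genuine gaps.

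First, the central claim that $G/\LL w\GG\cong\zed$ if and only if $w=1$ in $G$ is simply false, and in fact backwards: when $w=1$ the normal closure $\LL w\GG$ is trivial, so $G/\LL w\GG=G$, which is certainly not $\zed$ since $G$ has unsolvable word problem. You noticed something was wrong here but did not repair it; any repair would have to be a genuinely different construction.

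Second, even if you replaced this by some correct group-theoretic construction $w\mapsto H_w$ with $H_w\cong\zed$ exactly when $w=1$, you need $H_w\in\K_n$ for \emph{every} $w$ in order to invoke Corollary~\ref{cor9}. Quotients of knot groups are not in general knot groups: killing a relator can destroy $H_2=0$ or change $H_1$, and for $n=2$ the class $\K_2$ has no simple algebraic characterization to check against. The $(\A,\B,\C)$-constructions of Section~3 produce groups in $\K_3$, not in $\K_n$ for the given $n$, so they do not help when $n=2$.

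Third, the equivalence ``$n$-knot trivial $\iff$ group is $\zed$'' that you want to cite is not available for $n=2$ in the locally flat PL category; whether a PL $2$-knot with group $\zed$ must be unknotted is tied to the smooth $4$-dimensional Poincar\'e conjecture. Since the theorem is meant to apply conditionally to $n=2$ as well, you cannot rely on this.

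The paper avoids all of these issues by working geometrically rather than passing through groups. It realizes $U$ as the group of an actual $n$-knot, connect-sums with a spun trefoil, performs surgery to obtain a manifold $M^{n+2}$ containing a trivially embedded $\Sigma^n$, and then for each word $w$ does a further surgery along a curve whose class in $\pi_1$ encodes $w$, recovering $S^{n+2}$. The point is that the output is always an honest $n$-knot presentation by construction; when $w=1$ the sphere visibly bounds a disk, and when $w\ne1$ the knot group is an HNN extension of a nontrivial group, hence not $\zed$, so the knot is nontrivial. Only the easy direction (trivial knot $\Rightarrow$ group $\zed$) is used.
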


\begin{proof} 
We may assume $n>1$ since the groups in $\K_1$ have solvable word problem 
(see \cite{W2}). 

We give first a sketch of the proof.

Suppose $U = |\langle \mu,x_1,\ldots,x_m : r_1,\ldots,r_p \rangle|$ is the 
group of the $n$-knot $(S^{n+2},\Gamma^n)$ where $U$ has unsolvable word 
problem and $\mu$ represents a meridian of $\Gamma^n$. 
Consider the knot $(S^{n+2},\Lambda^n)$ obtained by taking the connected 
sum of $(S^{n+2},\Gamma^n)$ with the trefoil spun $(n-1)$ times. 

Let $M^{n+2}$ be the manifold obtained by surgery on $(S^{n+2},\Lambda^n)$; 
the knot $\Lambda^n$ is replaced by a 1-sphere $S^1$. 
Let $\Sigma^n$ be a trivial $n$-sphere in $M^{n+2}-S^1$. 
Then, the fundamental group of $M^{n+2}$, which is isomorphic to that 
of $S^{n+2}-\Lambda^n$, is 
$$\pi_1 (M^{n+2}) = U *_\zed Y 
= \langle \mu,x_1,\ldots,x_m,y_1,y_2\ :\  
r_1,\ldots, r_p, y_1y_2y_1y_2^{-1} y_1^{-1} y_2^{-1}, \mu y_1^{-1}\rangle$$
where $Y$ is the trefoil group and the amalgamating subgroup $\zed$ is 
generated by $y_1$. 
Also 
$$\pi_1 (M^{n+2} -\Sigma^n) 
= \langle \sigma,\mu,x_1,\ldots,x_m,y_1,y_2\ :\ 
r_1,\ldots, r_p,y_1 y_2 y_1 y_2^{-1} y_1^{-1} y_2^{-1}, \mu y_1^{-1}\rangle$$
where $\mu$ represents $S^1$ and $\sigma$ a meridian of $\Sigma^n$.

To a word $w$ in $\mu,x_1,\ldots x_m$ associate a knot $(S_w^{n+2},\Sigma^n)$
where $S_w^{n+2}$ is obtained by surgery on $(M,\alpha)$, $\alpha$ being 
a 1-sphere in $M^{n+2}-\Sigma^n$ representing 
$\sigma^{-1} [w,y_2]^{-1} \sigma [w,y_2] \mu \in \pi_1 (M^{n+2} -\Sigma^n)$.
Notice that, as a 1-sphere in $M^{n+2}$, $\alpha$ represents 
$\mu\in \pi_1 (M^{n+2})$ and is therefore isotopic to $S^1$; this implies that 
$S_w^{n+2}$ is the $(n+2)$-sphere. 
Also, as we explain at the end of the proof, $(S_w^{n+2},\Sigma^n)$ is 
trivial if and only if $w=1$ in $U$. 

We show below that this function associating knots (or rather knot 
presentations) to words can be defined effectively. 
Hence if there were an algorithm deciding whether or not $n$-knots 
are trivial, there would be an algorithm which would solve the word 
problem in $U$.

We now proceed to give a more rigorous proof.

A simplicial complex $T$ with underlying polyhedron PL-homeomorphic to 
the manifold $M^{n+2}$ described above can be obtained by pasting together 
suitable simplicial complexes $E$ and $F$ with $|E|$ PL-homeomorphic to the 
exterior of $\Lambda^n$ and $|F| \approx S^1 \times D^{n+1}$. 
Also we may assume $E$ has subcomplexes $E_1$ and $E_2$ with $|E_1|$ 
PL-homeomorphic to the exterior of $\Gamma^n$, $|E_2|$ PL homeomorphic 
to the exterior of the spun trefoil and $|E_1|\cap |E_2| \approx S^1\times D^n$
with $\partial (|E_1| \cap |E_2|)$ containing a meridian of $\Lambda^n$.
We think of $E$ and $F$ as subcomplexes of $T$. 
We can assume $T$ contains a subcomplex $S$, disjoint from $F$, such that 
$|S| \approx S^n\times D^2$ and a core $\Sigma$ of $|S|$ bounds a PL 
$(n+1)$-disk in $|T| - |F|$.
Choose a vertex $*$ in $|E_1| \cap |E_2| \cap |F|$. 
One can find presentations $\langle\mu,x_1,\ldots,x_m : r_1,\ldots,r_p\rangle$, 
$\langle y_1,\ldots,y_k :s_1,\ldots,s_\ell\rangle$, 
$\langle \mu,x_1,\ldots,x_m,y_1,\ldots,y_k : r_1,\ldots,r_p,s_1,\ldots,
s_\ell,\mu y_1^{-1}\rangle$ and 
$\langle \sigma,\mu,x_1,\ldots,x_m,y_1,\ldots,y_k : 
r_1,\ldots,r_p, s_1,\ldots, s_\ell,\mu y_1^{-1}\rangle$ 
of $\pi_1 (|E_1|,*)$, $\pi_1 (|E_2|,*)$, $\pi_1(|T|,*)$ and 
$\pi_1 (|T| - |S|,*)$ respectively, by the usual method of taking a 
maximal tree in the 1-skeleton containing $*$, letting the generators be 
in a one-to-one correspondence with the remaining edges of the 1-skeleton 
and reading the relations from the 2-simplices. 
We can assume that a meridian of $\Lambda^n$ contained in 
$\partial (|E_1| \cap |E_2|)$ is represented by $\mu$ and by $y_1$, 
a meridian of $\Sigma$ is represented by $\sigma$, and $y_2$ represents an 
element of $\pi_1 (|E_2|,*)$ which does not commute with any non-trivial 
power of $y_1$. 
The inclusion-induced homomorphism $\pi_1 (|T| - |S|,*)\to \pi_1(|T|,*)$ 
sends $\sigma$ to 1, $\mu$ to $\mu$, $x_i$ to $x_i$ and $y_i$ to $y_i$. 

For each $r\ge1$, consider the $r$-th barycentric subdivision 
$(T^{(r)},S^{(r)})$ of the pair $(T,S)$. 
Every element of $\pi_1 (|T|- |S|,*)$ can be represented by an oriented 
PL 1-sphere containing $*$ which, by \cite[Corollary~1.6]{Hu} can be 
taken to be a subcomplex of $T^{(r)}$ for some $r$. 
We may assume that we know, for a given vertex $v$ of the subdivision 
$T^{(r)}$, the simplices of $T$ to which $v$ belongs. 
This enables one to give, for any $*$-based edge-loop 
(see \cite[sec.~6.3]{HW}) $\alpha$ in $T^{(r)}$, not meeting $|S|$, 
a $*$-based edge-loop in $T$ homotopic to it and, therefore, a word in 
$\sigma,\mu,x_1,\ldots,x_m,y_1,\ldots,y_k$, representing it; one can 
then recursively enumerate all words in $\sigma,\mu,x_1,\ldots,x_m,y_1,
\ldots,y_k$ representing $[\alpha]\in \pi_1 (|T| - |S|,*)$ since the 
words representing the trivial element of $|\langle\sigma,\mu,x_1,\ldots,x_m,
y_1,\ldots, y_k  : r_1,\ldots,r_p,s_1,\ldots,s_\ell\rangle|$ can be 
recursively enumerated.

Let $\Omega$ be a recursive enumeration of the triples $(r,C,u)$ such that 
\begin{itemize}
\item[(1)] $r$ is a positive integer,
\item[(2)] $C$ is an oriented 1-sphere in $|T| - |S|$ containing $*$, which 
is a subcomplex of $T^{(r)}$,
\item[(3)] $u$ is a word in $\sigma,\mu,x_1,\ldots,x_m,y_1,\ldots,y_k$ 
representing $[C]\in \pi_1 (|T| -|S|,*)$.
\end{itemize}

We now give a recursive function associating to every word $w$ in 
$\mu,x_1,\ldots,x_m$ a presentation $\P(w)$ of an $n$-knot. 
If $w$ is such a word, let $\Omega (j) = (r,C,u)$ be the triple with 
smallest $j$ such that $u = \sigma^{-1} [w,y_2]^{-1}$ 
$\sigma[w,y_2]\mu$ in the free group generated by 
$\sigma,\mu,x_1,\ldots,x_m,y_2$ and let 
$L = \{ \tau \in T^{(r+2)} : \tau\cap |C| = \emptyset\}$. 
Notice that $S^{(r+2)}$ is a subcomplex of $L$ so that, for every 
$q$, $S^{(r+2+q)}$, the $q$-th barycentric subdivision of $S^{(r+2)}$, is a 
subcomplex of $L^{(q)}$, the $q$-th barycentric subdivision of $L$. 
Recursively enumerate all triples $(\bA,\bD,\bB)$ such that $(\bA,\bD)$ 
is a presentation of an $n$-knot and $\bB$ is a subcomplex of 
$\overline{\bA-\bD}$; in this enumeration take the first triple 
$(\bA,\bD,\bB)$ such that (a realization of) $(\,\overline{\bA-\bD},\bB)$ 
is isomorphic to $(L^{(q)},S^{(r+2+q)})$ for some $q$, and define 
$\P(w) = (\bA,\bB)$.

To show that $\P(w)$ is well-defined we need only prove that in the last 
enumeration there is at least one triple $(\bA,\bD,\bB)$ such that 
$(\,\overline{\bA-\bD},\bB)$ is isomorphic to $(L^{(q)},S^{(r+2+q)})$ for 
some $q$. 
Since $\sigma=1$ in $\pi_1 (|T|,*)$, $[C]\in \pi_1(|T|,*)$ is represented by 
$\mu$ so $C$ is homotopic, and therefore isotopic, in $|T|$, 
to a core of $|F|$.  
Hence, $|L|$ is PL-homeomorphic to the knot exterior $|E|$. 
Let $D$ be a simplicial  complex such that $|D| \approx S^n\times D^2$.
Denote by $\partial D$ (resp.\ $\partial L$) the subcomplex of $D$ 
(resp.\ $L$) such that 
$|\partial D| = \partial |D|$ (resp.\ $|\partial L| = \partial |L|$) 
and let $f:\partial |D| \to \partial |L|$ be a PL homeomorphism such that 
$|D| \cup_f |L|$ is PL homeomorphic to $S^{n+2}$. 
By \cite[1.10, 1.6, 1.8 and 1.3(2)]{Hu} 
one may assume that $f:\partial D\to (\partial L)^{(q)}$ is a simplicial 
isomorphism for some $q$. 
Take an abstract complex pair $(\bD,\partial\bD)$ (resp.\ $(\bL,\bB)$) having 
$(D,\partial D)$ (resp.\ $(L^{(q)},S^{(r+2+q)})$ as a realization and 
let $\varphi :V(\partial \bD) \to V(\bL)$ correspond to $f$.
By changing the names of the vertices of $D$ if necessary, we can assume 
that $\varphi (v) =v$ for every $v\in V(\partial\bD)$ and that 
$\D\cap \bL = \partial \bD$.
If we now define $\bA = \bL\cup \bD$, then the triple $(\bA,\bD,\bB)$ has the 
required properties. 
Hence $\P(w)$ is well-defined. 

If $w=1$ in $U$ then $C$ is isotopic, in $\overline{|T|-|S|}$, to a core 
of $|F|$ and, therefore, there is a PL $(n+1)$-disk in $|T|$, bounded by 
a core of $|S|$, which does not intersect $C$. 
This implies that $\P(w)$ presents the trivial knot type. 

Now, the group $G_w$ of a knot in the knot type presented by $\P(w)$ is 
$$|\langle \sigma,\mu,x_1,\ldots,x_m,y_1,\ldots,y_k \ :\ 
r_1 = 1,\ldots, r_p = 1,\  s_1 = 1,\ldots,s_\ell = 1,\  
\sigma^{-1} [w,y_2]\sigma = [w,y_2]\mu\rangle |\ .$$

Now, $[w,y_2]\mu$ has infinite order in 
$|\langle\mu,x_1,\ldots,x_m,y_1,\ldots,y_k
: r_1,\ldots,r_p, s_1,\ldots,s_\ell,\mu y_1^{-1} \rangle |$ 
\break ($= \pi_1 (S^{n+2} -\Lambda^n)$). 
If $w$ does not represent the trivial element of 
$|\langle\mu ,x_1,\ldots,x_m : r_1 ,\ldots,r_p\rangle|$ 
then also $[w,y_2]$ has infinite 
order in $\pi_1 (S^{n+2}-\Lambda^n)$ (here one uses that $[y_1^n,y_2]\ne1$ 
for any $n\ne 0$) and therefore $G_w$ is an HNN extension of 
$\pi_1 (S^{n+2}-\Lambda^n)$.

Thus if $w \ne1$ in $|\langle  \mu,x_1,\ldots,x_m : r_1,\ldots,r_p\rangle|$ 
then $\P(w)$ presents a non-trivial knot type.

Hence, if the set of presentations of $n$-knots defining the trivial knot
type were recursive, the word problem in $U$ would be solvable, which 
is not the case.
\end{proof}

Since $\K_n = \K_3$ for $n\ge3$ and $\K_3$ contains groups with unsolvable 
word problem by Corollary~\ref{cor:wordprob}
one has the following corollary (cf.\ \cite{NW}).

\begin{cor}[Nabutovsky-Weinberger]\label{cor:NW} 
If $n\ge3$ then the set of presentations of $n$-knots which present 
the trivial knot is not recursive.
\end{cor}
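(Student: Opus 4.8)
The plan is to obtain this statement as an immediate application of Theorem~\ref{thm16}. Recall from Section~2 that $\K_n = \K_3$ for every $n \ge 3$, and that, by Corollary~\ref{cor:wordprob}, $\K_3$ contains a group with unsolvable word problem. Hence, for any fixed $n \ge 3$, the class $\K_n$ contains a group with unsolvable word problem, which is exactly the hypothesis of Theorem~\ref{thm16} for that value of $n$.

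Concretely, I would first fix $n \ge 3$ and invoke the stabilization $\K_n = \K_3$ together with Corollary~\ref{cor:wordprob} to produce the required group in $\K_n$. Then I would apply Theorem~\ref{thm16} directly to conclude that the set of presentations of $n$-knots which present the trivial knot is not recursive. Equivalently, there is no algorithm deciding whether a given $n$-knot presentation presents the unknot, which recovers the theorem of Nabutovsky and Weinberger \cite{NW}.

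There is no genuine obstacle here: all the substance lies in Theorem~\ref{thm16}, whose proof carries out the effective construction assigning $n$-knot presentations to words in a group with unsolvable word problem, and in Corollary~\ref{cor:wordprob}, which supplies such a group inside $\K_3$. The only point to verify is that one and the same value of $n \ge 3$ can be fed into both inputs, and this is immediate from the equality $\K_n = \K_3$.
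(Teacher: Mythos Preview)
Your proposal is correct and matches the paper's own argument essentially verbatim: the paper also derives the corollary immediately from Theorem~\ref{thm16} by invoking the equality $\K_n = \K_3$ for $n\ge 3$ together with Corollary~\ref{cor:wordprob}.
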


\subsection*{Remarks.}$\quad$

\begin{itemize}
\item[(1)] If in the proof of Theorem~\ref{thm16} one can take $U$ 
torsion-free (as one may if $n\ge3$) a slightly simpler proof can be given: 
there is no need to take the connected sum with a spun trefoil and instead 
of the word $\sigma^{-1} [w,y_2]^{-1}\sigma [w,y_2]\mu$ one can take 
$\sigma^{-1} w^{-1} \sigma w\mu$.

\item[(2)] If $n\ge3$ then any property enjoyed by the trivial $n$-knot 
but not by any of the knots $\P(w)$ of the proof of Theorem~\ref{thm16} 
with $w\ne1$ is not algorithmically recognizable. 
Among these are:
\begin{itemize}
\item[(i)] Being a fibered knot.
\item[(ii)] Having a group with finitely generated (or presented) commutator 
subgroup.
\item[(iii)] Having a group with solvable word problem.
\item[(iv)] Having a torsion-free group (here take $U$ with torsion).
\item[(v)] If $H$ is a non-trivial group with $H\not\cong \zed$, having a group 
not containing $H$ as a subgroup (here take $U$ containing $H$).
\end{itemize}
\end{itemize}

In conclusion here are some questions.
\begin{itemize}
\item[(1)] {\em Is there a 2-knot group with unsolvable word problem?
Conjecture: Yes.}
\item[(2)] {\em Does each finitely presented group embed in a 2-knot 
group? Conjecture: Yes.} 
\item[(3)] {\em If $g$ is a non-negative integer, is there an algorithm 
to decide whether or not a given locally flat PL-embedded surface 
of genus~$g$ in $S^4$ is unknotted? 
Conjecture: No for any value of $g$.}
\end{itemize}


\vskip.2in 

{\small\parindent=0pt\parskip=0pt
Francisco Gonz\'alez-Acu\~na\par
Instituto de Matem\'aticas\par
UNAM\par
Ciudad Universitaria 04510\par
M\'exico D.F., Mexico\par
$\quad$ and\par
CIMAT A.P. 402\par
Guanajuato, Gto., Mexico\par
ficomx@yahoo.com.mx\par
\bigskip

Cameron McA. Gordon\par
Department of Mathematics\par
The University of Texas at Austin\par
1 University Station -- C1200\par
Austin, TX 78712-0257 U.S.A.\par
gordon@math.utexas.edu\par
\bigskip

Jonathan Simon\par
Department of Mathematics\par
University of Iowa\par
Iowa City, IA 52242 U.S.A.\par
jsimon@math.uiowa.edu\par

}


\begin{thebibliography}{MMM}
\frenchspacing

\bibitem[A]{A} 
S.I. Adjan, 
{\em On algorithmic problems in effectively complete classes of groups}, 
Dokl. Akad. Nauk. SSSR {\bf 123} (1958), 13--16.

\bibitem[Ar1]{Ar1} 
E. Artin, 
{\em Theorie der Z\"opfe}, 
Abh. Math. Sem. Univ. Hamburg {\bf 4} (1925), 47--72.

\bibitem[Ar2]{Ar2} 
E. Artin, 
{\em Zur Isotopie zweidimensionalen Fl\"achen im $R_4$}, 
Abh. Math. Sem. Univ. Hamburg {\bf 4} (1926), 174--177. 

\bibitem[BDM]{BDM} 
G. Baumslag, E. Dyer and C.F. Miller III, 
{\em On the integral homology of finitely presented groups}, 
Topology {\bf 22} (1983), 27--46.

\bibitem[Bie]{Bie} 
R. Bieri, 
{\em Mayer-Vietoris sequences for HNN-groups and homological duality}, 
Math. Zeit. {\bf 143} (1975), 123--130. 

\bibitem[Bir]{Bir} 
J.S. Birman, 
Braids, Links and Mapping Class Groups, 
Ann. of Math. Studies, No.82, 
Princeton University Press, Princeton, New Jersey, 1974. 

\bibitem[Bo]{Bo} 
W.W. Boone, 
{\em Certain simple unsolvable problems in group theory, 
I, II, III, IV, V, VI}, 
Nederl. Akad. Wetensch Proc. Ser. A. 
{\bf 57} (1954), 231--237, 492--497;  
{\bf 58} (1955), 252--256, 571--577;
{\bf 60} (1962), 22--27, 227--232.

\bibitem[BHP]{BHP} 
W.W. Boone, W. Haken and V. Poenaru, 
{\em On recursively unsolvable problems in topology and their classification},
Contributions to Math. Logic (Colloquium, Hannover, 1966), North-Holland,
Amsterdam, 1968, pp.~37--74. 

\bibitem[BMS]{BMS} 
A.M. Brunner, E.J. Mayland, Jr. and J. Simon, 
{\em Knot groups in $S^4$ with nontrivial homology}, 
Pacific J. Math. {\bf 103} (1982), 315--324.

\bibitem[BT]{BT} 
F.R. Beyl and J. Tappe, 
{\em Group extensions, representations, and the Schur multiplicator},
Lecture Notes in Mathematics, 958, Springer-Verlag, Berlin-New York, 1982.

\bibitem[C1]{C1} 
S. Cappell, 
{\em Mayer-Vietoris sequences in hermitian $K$-theory}, 
Algebraic $K$-theory, III: Hermitian $K$-theory and geometric applications 
(Proc. Conf. Battelle Memorial Inst., Seattle, WA, 1972), pp.~513--525, 
Lecture Notes in Math., Vol.~343, Springer, Berlin, 1973.

\bibitem[C2]{C2} 
S. Cappell, 
{\em Unitary nilpotent groups and Hermitian $K$-theory. I}, 
Bull. Amer. Math. Soc. {\bf 80} (1974), 1117--1122. 

\bibitem[CM]{CM} 
D.J. Collins and C.F. Miller, III, 
{\em The word problem in groups of cohomological dimension~2}, 
Groups St. Andrews 1997 in Bath, I, London Math. Soc. Lecture Note Ser., 
260, Cambridge University Press, Cambridge, 1999, pp.~211--218. 

\bibitem[Fa]{Fa} 
M.{\v S}. Farber, 
{\em Duality in an infinite cyclic covering and even dimensional knots}, 
Math. USSR Izvestija {\bf 11} (1977), no.4, 749--781.

\bibitem[Fo]{Fo} 
R.H. Fox, 
{\em A quick trip through knot theory}, 
Topology of 3-Manifolds and Related Topics, 
M.K.~Fort, ed., pp.~120--167, Prentice-Hall, 1962. 

\bibitem[Gon1]{Gon1} 
F. Gonz\'alez-Acu\~na, 
{\em Homomorphs of knot groups}, 
Ann. of Math. {\bf 102} (1975), 373--377.

\bibitem[Gon2]{Gon2} 
F. Gonz\'alez-Acu\~na, 
{\em A characterization of 2-knot groups}, 
Rev. Mat. Iberoamericana {\bf 10} (1994), 221--228.

\bibitem[Gon3]{Gon3} 
F. Gonz\'alez-Acu\~na, 
{\em Open books}, 
University of Iowa Notes, 1975. 

\bibitem[Gor1]{Gor1} 
C. McA. Gordon, 
{\em Homology of groups of surfaces in the 4-sphere}, 
Math. Proc. Camb. Phil. Soc. {\bf 89} (1981), 113--117. 

\bibitem[Gor2]{Gor2} 
C. McA. Gordon, 
{\em Some embedding theorems and undecidability questions for groups}, 
Combinatorial and Geometric Group Theory, 
A.J.~Duncan, N.D.~Gilbert and J.~Howie, eds., 
London Math. Soc. Lecture Note Series {\bf 204}, pp.~105--110, 
Cambridge University Press, 1995.

\bibitem[Hak]{Ha} 
W. Haken, 
{\em Theorie der Normalfl\"achen}, 
Acta. Math. {\bf 105} (1961), 245--375.

\bibitem[HW]{HW} 
P.J. Hilton and S. Wylie, 
Homology Theory, 
Cambridge University Press, Cambridge, 1960.

\bibitem[Hig]{Hig} 
G. Higman, 
{\em Subgroups of finitely presented groups}, 
Proc. Royal Soc. London Ser. A. {\bf 262} (1961), 455--475.

\bibitem[Hi]{Hi} 
J.A. Hillman,
{\em High dimensional knot groups which are not two-knot groups}, 
Bull. Austral. Math. Soc. {\bf 16} (1977), 449--462.

\bibitem[Hu]{Hu} 
J.F.P. Hudson, 
PL Topology, Benjamin, New York, 1970.

\bibitem[Ka]{Ka} 
S. Kamada, 
{\em A characterization of groups of closed orientable surfaces in 4-space}, 
Topology {\bf 33} (1994), 113--122.

\bibitem[Ke1]{Ke1} 
M.A. Kervaire, 
{\em On higher dimensional knots}, 
Differential and Combinatorial Topology, 
A Symposium in Honor of Marston Morse, 
S.S.~Cairns, ed., 
Princeton University Press, pp.~105--119, 
Princeton, NJ, 1965.

\bibitem[Ke2]{Ke2} 
M.A. Kervaire, 
{\em Smooth homology spheres and their fundamental groups}, 
Trans. Amer. Math. Soc. {\bf 144} (1969), 67--72. 

\bibitem[Kl]{Kl} 
A.A. Klyachko,
{\em A funny property of sphere and equations over groups}, 
Comm. Algebra {\bf 21} (1993), 2555--2575.

\bibitem[KVF]{KVF} 
I.A. Volodin, V.E. Kuznetsov and A.T. Fomenko, 
{\em The problem of discriminating algorithmically the standard 
three-dimensional sphere}, 
Russian Math. Surveys {\bf 29}:5 (1974), 71--172.

\bibitem[L]{L} 
J. Levine, 
{\em Some results on higher dimensional knot groups}, 
Knot Theory, Proceedings, Plans-sur-Bex, Switzerland 1977, 
J.C.~Hausmann, ed., 
Lecture Notes in Mathematics {\bf 685}, 
Springer, 1978.

\bibitem[Li]{Li} 
R.A. Litherland, 
{\em The second homology of the group of a knotted surface}, 
Quart. J. Math. Oxford {\bf 32} (1981), 425--434.

\bibitem[LS]{LS} 
R.C. Lyndon and P.E. Schupp, 
Combinatorial Group Theory, 
Ergebnisse der Mathematik und ihrer Grenzgebiete {\bf 89}, 
Springer-Verlag, Berlin-Heidelberg-New York, 1977.

\bibitem[M]{M} 
T. Maeda, 
{\em On the groups with Wirtinger presentations}, 
Math. Sem. Notes Kwansei Gakuin Univ., Sept. 1977
(also Kobe Univ. {\bf 5} (1977), 347--358). 

\bibitem[Mill]{Mill}
C.F. Miller, III, 
On Group-Theoretic Decision Problems and Their Classification, 
Ann. of Math. Studies, No.68, Princeton University Press, Princeton, 
New Jersey, 1971. 

\bibitem[Miln1]{Miln1} 
J.W. Milnor, 
{\em Infinite cyclic coverings},
Conference on the Topology of Manifolds (Michigan State Univ., E. Lansing, 
Mich, 1967), Prindle, Weber \& Schmidt, Boston, Mass., 1968, pp~115--133.

\bibitem[Miln2]{Miln2} 
J.W. Milnor, 
Introduction to Algebraic $K$-theory, 
Ann. of Math. Studies, No.72, Princeton University Press, 
Princeton, New Jersey, 1971.

\bibitem[Miln3]{Miln3} 
J.W. Milnor, 
{\em A unique factorization theorem for 3-manifolds}, 
Amer. J. Math. {\bf 84} (1962), 1--7. 

\bibitem[Miln4]{Miln4} 
J.W. Milnor, 
{\em Whitehead torsion}, 
Bull. Amer. Math. Soc. {\bf 72} (1966), 358--426. 

\bibitem[NW]{NW} 
A. Nabutovsky and S. Weinberger,  
{\em Algorithmic unsolvability of the triviality problem for multidimensional 
knots}, 
Comment. Math. Helv. {\bf 71} (1996), 426--434.

\bibitem[N]{N} 
L.P. Neuwirth, 
Knot Groups, 
Ann. of Math. Studies, No.~56, Princeton University Press, 
Princeton, New Jersey, 1965. 

\bibitem[No]{No} 
P.S. Novikov,
{\em On the algorithmic unsolvability of the word problem in groups}
(Russian),
Izv. Akad. Nauk. SSSR, Ser. Math. {\bf 18} (1954), 485--524.

\bibitem[P]{P} 
C.D. Papakyriakopoulos, 
{\em On Dehn's lemma and the asphericity of knots}, 
Ann. of Math. {\bf 66} (1957), 1--26.

\bibitem[Pe1]{Pe1} 
G. Perelman, 
{\em The entropy formula for the Ricci flow and its geometric applications},
arXiv.math.DG/0211159. 

\bibitem[Pe2]{Pe2} 
G. Perelman, 
{\em Ricci flow with surgery on three-manifolds},
arXiv.math.DG/0303109.

\bibitem[Pe3]{Pe3} 
G. Perelman, 
{\em Finite extinction time for the solutions to the Ricci flow on 
certain three-manifolds}, 
arXiv.math.DG/0307245. 

\bibitem[R1]{R1} 
M.O. Rabin, 
{\em Recursive unsolvability of group theoretic problems}, 
Ann. of Math. {\bf 67} (1958), 172--174. 

\bibitem[R2]{R2} 
M.O. Rabin, 
{\em Computable algebra, general theory and theory of computable fields}, 
Trans. Amer. Math. Soc. {\bf 95} (1960), 341--360. 

\bibitem[Ro]{Ro} 
D. Rolfson, 
Knots and Links, 
Math. Lecture Series, 7, Publish or Perish, Inc., 
Berkeley, CA, 1976.

\bibitem[Rot]{Rot} 
J.J. Rotman, 
An Introduction to the Theory of Groups, 
Graduate Texts in Mathematics 148, 
4th edition, Springer-Verlag, New York, 1999.

\bibitem[Ru]{Ru} 
J.H. Rubinstein, 
{\em An algorithm to recognize the 3-sphere}, 
Proceedings of the International Congress of Mathematicians, 
(Z\"urich, 1994), 601--611, Birkh\"auser, Basel, 1995. 

\bibitem[SW]{SW} 
P. Scott and T. Wall, 
{\em Topological methods in group theory}, 
Homological group theory (Proc. Sympos., Durham, 1977), 
London Math. Soc. Lecture Note Ser., 36, Cambridge Univ. Press, 
Cambridge-New York, 1979, pp.~137--203.

\bibitem[Si]{Si} 
J. Simon, 
{\em Wirtinger approximations and the knot groups of $F^n$ in $S^{n+2}$}, 
Pacific J. Math. {\bf 90} (1980), 177--190. 

\bibitem[Sm]{Sm} 
S. Smale, 
{\em Generalized Poincar\'e's conjecture in dimensions greater than four}, 
Ann. of Math. {\bf 74} (1961), 391--406.

\bibitem[St]{St} 
J. Stallings, 
{\em On the recursiveness of sets of presentations of 3-manifold groups}, 
Fund. Math. {\bf 63} (1962),191--194. 

\bibitem[Sw]{Sw} 
G.A. Swarup, 
{\em A note on higher dimensional knots},
Math. Z. {\bf 121} (1971), 141--144. 

\bibitem[W1]{W1} 
F. Waldhausen, 
{\em Recent results on sufficiently large 3-manifolds}, 
Proc. of Symp. in Pure Math. XXXII, part~2, 
Amer. Math. Soc., 1978, 21--38. 

\bibitem[W2]{W2} 
F. Waldhausen, 
{\em The word problem in fundamental groups of sufficiently large 
irreducible 3-manifolds}, 
Ann. of Math. {\bf 88} (1968), 272--280. 

\bibitem[W3]{W3} 
F. Waldhausen, 
{\em Algebraic $K$-theory of generalized free products, I, II}, 
Ann. of Math. {\bf 108} (1978), 135--204.

\bibitem[Wa]{Wa} 
C.T.C. Wall, 
Surgery on Compact Manifolds, 
London Math. Soc. Monographs, vol.~1,
Academic Press, London, New York, 1971. 

\bibitem[Wi]{Wi}
H.E. Winkelnkemper,
{\em Artin presentations. I. Gauge theory, $3+1$ TQFT's and the braid groups},
J. Knot Theory Ramifications {\bf 11} (2002), 223--275. 

\bibitem[Y]{Y} 
T. Yajima,
{\em Wirtinger presentations of knot groups},
Proc. Japan Acad. {\bf 46} (1970), 997--1000.

\bibitem[Z]{Z}
E.C. Zeeman, 
{\em Twisting spun knots}, 
Trans. Amer. Math. Soc. {\bf 115} (1965), 471--495.

\end{thebibliography}
\end{document}